\documentclass[a4paper,12pt,leqno]{amsart}
\usepackage{amsmath}
\usepackage{amsfonts}
\usepackage{mathtools}
\usepackage{amssymb}
\usepackage{bbm}
\pagestyle{plain}
\usepackage{amscd}
\usepackage{color}

\usepackage[colorlinks=true,urlcolor=blue,citecolor=blue,linkcolor=blue,linktocpage,pdfpagelabels,bookmarksnumbered,bookmarksopen]{hyperref}
%\usepackage[hyperpageref]{backref}

%\swapnumbers

\newtheorem{teo}{Theorem}[section]
\newtheorem{lem}[teo]{Lemma}
\newtheorem{cor}[teo]{Corollary}
\newtheorem{prop}[teo]{Proposition}

\theoremstyle{remark}
\newtheorem{oss}[teo]{Remark}
\theoremstyle{definition}
\newtheorem{defi}[teo]{Definition}

\newcommand{\bbR}{{\mathbb{R}}}

\newcommand{\bbN}{{\mathbb{N}}}
\newcommand{\bbQ}{{\mathbb{Q}}}
\newcommand{\bbZ}{{\mathbb{Z}}}

\newcommand{ \Rn} {{\mathbb {R}^n}}
\newcommand{ \Rm} {{\mathbb {R}^m}}

\newcommand{\bbH}{\mathbb{H}}

\newcommand{\eps}{\varepsilon}

\newcommand{\average}{{\mathchoice {\kern1ex\vcenter{\hrule height.4pt
width 6pt
depth0pt} \kern-9.7pt} {\kern1ex\vcenter{\hrule height.4pt width 4.3pt
depth0pt}
\kern-7pt} {} {} }}

%?????????????????
%?????????????????

\newcommand{\mA}{\mathcal{A}}

\newcommand{\ci}{\mathbf{C}}

\newcommand{\Lefh}{f_{h,e}}

\newcommand{\Czero} {{\hbox{\bf C}}_c }
\newcommand{\Co}[1]{{\bf C}^{#1}}

\renewcommand{\L}[1]{{L}^{#1}}

\newcommand{\W}[2]{{W}_X^{#1,#2} }
\newcommand{\HS}[2]{{H}_X^{#1,#2} }

\newcommand{\Lip}{{\hbox{\rm Lip}}}

\newcommand{\Om}{\Omega}

\newcommand{\norma}[1]{\Vert#1\Vert}

\newcommand{\scalare}[2]{\langle #1,#2 \rangle}

\newcommand{\vettore}[2]{(#1_1,\dots,#1_{#2})}

\pagestyle{plain}

\makeatletter
\def\cleardoublepage{\clearpage\if@twoside \ifodd\c@page\else
\hbox{}
\thispagestyle{empty}
\newpage
\if@twocolumn\hbox{}\newpage\fi\fi\fi}
\makeatother
\title{$\Gamma$-convergence  for functionals depending on vector fields. II. Convergence of minimizers.}
\author{A. Maione}
\address{Alberto Maione: Abteilung f{\"u}r Angewandte Mathematik\\Albert-Ludwigs-Universit{\"a}t Freiburg\\Hermann-Herder-Stra{\ss}e 10\\79104 Freiburg i. Br. - Germany\\}
\email{alberto.maione@mathematik.uni-freiburg.de}
\thanks{A.M., A.P. and F.S.C. are supported by the Indam-GNAMPA project 2020 ``Convergenze variazionali per funzionali e operatori dipendenti da campi vettoriali\,''. A.M. is also supported by MIUR and the University of Freiburg, Germany. A.P. and F.S.C. are also supported by MIUR and the University of Trento, Italy.}
\date{\today}
\author{A. Pinamonti}
\address{Andrea Pinamonti: Dipartimento di Matematica\\Universit\`a di Trento\\ Via Sommarive 14\\ 38123, Povo (Trento) - Italy\\}
\email{andrea.pinamonti@unitn.it}
\author{F. Serra~Cassano}
\address{Francesco Serra Cassano: Dipartimento di Matematica\\Universit\`a di Trento\\ Via Sommarive 14\\ 38123, Povo (Trento) - Italy\\}
\email{francesco.serracassano@unitn.it}

%%%%%%%%%%%%%%%%%%%%%%%%%%%%%

\begin{document}

%%%%%%%%%%%%%%%%%%%%%%%%%%%%%

\begin{abstract} Given a family of locally Lipschitz  vector fields $X(x)=(X_1(x),\dots,X_m(x))$ on $\Rn$, $m\leq n$, we study integral functionals  depending on $X$. Using the results in \cite{MPSC1}, we study the convergence of minima, minimizers and  momenta of those functionals. Moreover, we apply these results to the periodic homogenization in Carnot groups and to prove a $H$-compactness theorem for linear differential operators of the second order depending on $X$.
\end{abstract}
\maketitle

%%%%%%%%%%%%%%%%%%%%%%%%%%

\section{Introduction}

%%%%%%%%%%%%%%%%%%%%%%%%%%

In this paper we deal with the asymptotic behaviour of minima, minimizers and momenta, as $h\to\infty$, of the following sequence of minimization problems
\begin{equation}\label{minpb}
\inf\left\{F_h(u)+G(u):\,u\in W^{1,p}_X(\Om),\,u-\varphi\in  W^{1,p}_{X,0}(\Om)\right\}.
\end{equation}
Here $F_h,\,G:L^p(\Om)\to\bbR\cup\{\infty\}$ denote the functionals 
\begin{align}\label{F_ph}
    F_h(u):=
    \displaystyle{\begin{cases}
\int_\Om f_h(x,Xu(x))dx&\text{ if }u\in W^{1,p}_X(\Om)\\
\infty&\text{ otherwise}
\end{cases}}
\end{align}
and
\begin{equation}\label{Gdatum}
G(u):=\int_\Om g(x,u(x))\,dx\,,
\end{equation}
with $f_h:\Om\times\Rm\to\bbR$, $h\in\mathbb{N}$, and $g:\Omega\times\mathbb{R}\to\mathbb{R}$ Carath\'eodory  functions and $X(x):=(X_1(x),\dots,X_m(x))$ a given family of first order linear differential operators with Lipschitz coefficients on a bounded open set $\Omega\subset\Rn$, that is,
\[
X_j(x)=\,\sum_{i=1}^nc_{ji}(x)\partial_i\quad j=1,\dots,m
\]
with $c_{ji}(x)\in Lip(\Om)$ for $j=1,\dots,m$, $i=1,\dots,n$. 

\noindent In the following, we will refer to $X$ and $f_h$ as {\it $X$-gradient} and {\it integrand function}, respectively. The environment spaces $W^{1,p}_X(\Om)$ and $W^{1,p}_{X,0}(\Om)$ are the Sobolev spaces associated through the $X$-gradient in a classical way,  according to Folland-Stein \cite{FS} (see Definitions \ref{Definition 1.1.1} and \ref{Definition 1.1.3}) and $\varphi\in W^{1,p}_X(\Om)$ is a given function that plays the role of boundary datum in \eqref{minpb}.
As usual, we identify each $X_j$ with the vector field $$(c_{j1}(x),\dots,c_{jn}(x))\in \Lip(\Omega,\Rn)$$
and we call 
\begin{equation}\label{coeffmatrvf}
C(x) = [c_{ji}(x)]_{
{i=1,\dots,n}\atop{j=1,\dots,
m}}
\end{equation}
the {\it coefficient matrix of the $X$-gradient}. 
 
Throughout the paper, we will assume some structural conditions on the class of integrand functions and the $X$-gradient. An integrand function $f:\,\Omega\times\Rm\to\bbR$ will typically satisfy the following conditions: 
\begin{itemize}
\item[($I_1$)] $f:\,\Om\times\Rm\to\bbR$ is Borel measurable on $\Om$;
\item[($I_2$)] for a.e. $x\in\Om$, the function $f(x, \cdot):\,\Rm\to\bbR$ is convex;
\item[($I_3$)] there exist two positive constants $c_0\leq\,c_1$ and two nonnegative functions $a_0,\,a_1\in L^1(\Omega)$ such that
\begin{equation}\label{3.2}
c_0\,|\eta|^p-a_0(x)\le\,f(x,\eta)\le\,c_1\,\left|\eta\right|^p+\,a_1(x)    
\end{equation}
for a.e. $x\in\Om$ and for each $\eta\in\Rm$.
\end{itemize}
We will denote  by $I_{m,p}(\Omega,c_0,c_1,a_0,a_1)$ the class of such integrand functions and by $I_{m,p}(\Omega,c_0,c_1)$ if $a_0=a_1\equiv 0$. Similarly, function $g$ in \eqref{Gdatum} will satisfy a suitable growth condition (see \eqref{crescitag} and \eqref{constg}). 

As far as the structural conditions on the $X$-gradient are concerned, we will need two assumptions: the former is an algebraic  condition and the latter a metric condition.

\begin{defi}\label{frkcond} We say that a family of vector fields $X(x)=\,(X_1(x),\break\dots, X_m(x))$ satisfies the {\it linear independence condition} (LIC) on an open set $\Omega\subset\Rn$ if there exists a set ${\mathcal N}_X\subset\Om$, closed in the topology of $\Om$, such that $|\mathcal N_X|=\,0$ and $X_1(x),\dots, X_m(x)$ are linearly independent as vectors of $\Rn$ for each $x\in\Om_X:=\,\Om\setminus\mathcal N_X$. Here $|A|$ denotes the $n$-dimensional Lebesgue measure of a measurable subset $A\subset\Rn$.
\end{defi}
\noindent Notice that, if $X$ satisfies (LIC) on $\Omega$, then $m\le\,n$.
In some results, we will also assume that  $X$ is defined and Lipschitz continuous on an open neighborhood $\Omega_0$ of $\overline{\Omega}$ and that the following conditions hold:
\begin{itemize}
\item[(H1)] Let $d:\mathbb{R}^n\times \mathbb{R}^n\to[0,\infty]$ be the so-called Carnot-Carath\'eodory distance function  induced by $X$ (see, for instance, \cite[Section 2]{FSSC2}). Then, $d(x,y)<\infty$ for any $x,y\in \Omega_0$, so that $d$ is a standard distance in $\Omega_0$, and $d$ is continuous with respect to the usual topology of $\mathbb{R}^n$.
\item[(H2)] For any compact set $K\subset\Omega_0$ there exist a radius $r_K$ and a positive constant $C_K$, depending on $K$, such that 
\[
|B_d(x,2r)|\leq C_K |B_d(x,r)|
\]
for any $x\in K$ and $r<r_K$.  $B_d(x,r)$ denotes the (open) metric ball with respect to $d$, that is, $B_d(x,r):=\{y\in\Omega_0\ |\ d(x,y)<r\}$. 
\item[(H3)] There exist geometric constants $c,C>0$ such that $\forall B=B_{d}(\overline{x},r)$ with $cB:=B_{d}(\overline{x}, cr)\subseteq \Omega_0$, $\forall u\in \mathrm{Lip}(c\overline{B})$ and $\forall x\in\overline{B}$
\[
\left |u(x)-\frac{1}{|B|}\int_B u(y) dy\right |\leq C\int_{cB}|Xu(y)| \frac{d(x,y)}{|B_d(x,d(x,y))|}\,dy\,.
\]
\end{itemize}
Let us point out that (LIC) embraces relevant and wide  families of  vector fields studied in literature (see \cite[Example 2.2]{MPSC1}), as well as important families of vector fields satisfying
conditions (H1), (H2) and (H3) (see Remark \ref{exvf}).

Each functional \eqref{F_ph}  always admits an integral representation with respect to the Euclidean gradient. Indeed, for instance, functional \eqref{F_ph} can be represented as follows
\[
F_h(u)=\,\int_\Om \Lefh (x,Du(x))\,dx\quad\text{for each }u\in\ci^1(\Om)\,,
\]
where $\Lefh:\,\Omega\times\Rn\to\bbR$ now denotes the {\it Euclidean integrand}, defined as
\begin{equation}\label{Li}
\Lefh(x,\xi):=\,f_h(x,C(x)\xi)\quad\text{ for a.e. }x\in\Om\,,\text{ for each }\xi\in\Rn.
\end{equation}
Notice also that we cannot reverse this representation (see  \cite[Counterexample 3.14]{MPSC1}) and the representation with respect to the Euclidean gradient could yield a loss of coercivity (see \cite{MPSC1}). Nonetheless, we will show that, by replacing the Euclidean gradient with the $X$-gradient, we can get rid of this drawback.

In this paper, we will exploit as  main tools for studying minimization problems \eqref{minpb} some results of $\Gamma$-convergence for functionals depending on vector fields.
In particular, let us recall a $\Gamma$-compactness theorem for the sequence $(F_h)_h$ defined in \eqref{F_ph} (see Theorem \ref{mainthm}), obtained in \cite{MPSC1}, to which we will refer for all the relevant definitions.
More precisely, if the $X$-gradient satisfies (LIC) condition and the sequence of integrand functions $(f_h)_h\subset I_{m,p}(\Omega,c_0,c_1,a_0,a_1)$, then, up to a subsequence not relabeled, we can assume the existence of a functional $F:\,L^p(\Om)\to\bbR\cup\{\infty\}$ and $f\in I_{m,p}(\Om,c_0,c_1,a_0,a_1)$ such that 
\begin{equation}\label{GammalimitFhintro}
F=\,\Gamma(L^p(\Om))\text{-}\lim_{h\to\infty} F_h
\end{equation}
and $F$ admits the following representation 
\begin{equation}\label{GammalimitFintro}
F(u):=
\displaystyle{\begin{cases}
\int_{\Om}f(x,Xu(x))dx&\text{ if }u\in W^{1,p}_X(\Om)\\
\infty&\text{ otherwise}
\end{cases}
\,.
}
\end{equation}

Let us now describe the main results of the present paper and some of their applications. First, recall the following Poincar\'e inequality on $W^{1,p}_{X,0}(\Omega)$, $1\le\,p<\infty$, which holds provided that  $\Omega$ is a bounded domain of $\bbR^n$ and $X$ satisfies conditions (H1), (H2) and (H3) (see Proposition \ref{PoincareW1p0}). More precisely, there exists a positive constant $c_{p,\Omega}$, depending only on $p$ and $\Om$, such that
\begin{equation}\label{PoincareW1p0for}
c_{p,\Omega}\,\int_\Om |u|^p\,dx\le\, 
\int_\Om |Xu|^p\,dx\quad\text{for each }u\in W^{1,p}_{X,0}(\Omega)\,.
\end{equation}
We will also assume that $c_{p,\Omega}$ is the best constant in the Poincar\'e inequality \eqref{PoincareW1p0for}, that is, it is the largest constant for which \eqref{PoincareW1p0for} holds.

Let us begin with a result concerning the convergence of minima and minimizers for minimization problems \eqref{minpb}.
For fixed $\varphi\in W^{1,p}_X(\Om)$, let $\mathbbm{1}_{\varphi}:L^p(\Om)\to\{0;\infty\}$ denote the indicator function of the affine subspace of  $W^{1,p}_{X}(\Omega)$
\[
W^{1,p}_{X,\varphi}(\Omega):=\,\varphi+\,W^{1,p}_{X,0}(\Omega)
\]
and assume that the Carath\'eodory function $g:\Om\times\bbR\to\bbR$ in \eqref{Gdatum} satisfies the following growth condition:
there exist two constants $d_0,d_1$ and two nonnegative functions $b_0,b_1\in L^1(\Omega)$ such that
\begin{align}\label{crescitag}
d_0|s|^p-b_0(x)\le g(x,s)\le d_1|s|^p+b_1(x) 
\end{align}
for a.e. $x\in\Omega$ and for every $s\in\bbR$, with
\begin{equation}\label{constg}
d_1>\,0\quad\text{and}\quad-c_0\,c_{p,\Omega}<\,d_0\leq d_1\,.
\end{equation}
%where $c_{p,\Omega}$ is the Poincar\'e constant in \eqref{PoincareW1p0for}.
\begin{teo}\label{convminimiz}
Let $\Omega$ be a bounded and connected open set, $1<p<\infty$ and let $X$ satisfy (LIC) condition and conditions (H1), (H2) and (H3). Let $f_h,f\in I_{m,p}(\Om,c_0,c_1,a_0,a_1)$, let $g$ satisfy \eqref{crescitag} and \eqref{constg} and let $F_h, G,F$ be the functionals in \eqref{F_ph}, \eqref{Gdatum} and \eqref{GammalimitFintro}, respectively.
Fixed $\varphi\in W_{X}^{1,p}(\Omega)$, let $\Xi^{\varphi}_h,\Xi^{\varphi}:L^p(\Omega)\to \bbR\cup\{\infty\}$ be, respectively, defined as
\begin{equation}\label{sumsum}
\Xi^{\varphi}_h:=F_h+G+\mathbbm{1}_{\varphi}\quad\text{and}\quad\Xi^{\varphi}:=F+G+\mathbbm{1}_{\varphi}\,.
\end{equation}
If $(F_h)_h$ $\Gamma$-converges to $F$ in the strong topology of $L^p(\Om)$, then
\begin{itemize}
    \item[$(i)$] for each $h\in\mathbb{N}$, both $\Xi^{\varphi}_h$ and $\Xi^{\varphi}$ attain their minima in $L^p(\Omega)$ and 
    \begin{equation}\label{convergenceminima}
        \min_{u\in L^p(\Omega)} \Xi^{\varphi}(u)=\lim_{h\to\infty}\min_{u\in L^p(\Omega)}\Xi^{\varphi}_h(u)\,;
    \end{equation}
    \item[$(ii)$] if $(u_h)_h$ is a sequence of minimizers of $(\Xi^{\varphi}_h)_h$, that is,
    \[
    \Xi^{\varphi}_h(u_h)= \min_{u\in L^p(\Omega)}\Xi^{\varphi}_h(u)\quad\text{for any } h\in\mathbb{N}\,,
    \]
    then, there exists $\bar u\in W_{X,\varphi}^{1,p}(\Omega)$ such that, up to subsequences, 
    \begin{equation}\label{uhtouLp}
    u_h\to\bar u\text{ weakly in }W_X^{1,p}(\Omega) \text{ and strongly in }L^p(\Om)
    \end{equation}
    and 
    \begin{equation}\label{barumin}
        \Xi^{\varphi}(\bar u)=\,\min_{u\in L^p(\Omega)}\Xi^{\varphi}(u)\,.
    \end{equation}
\end{itemize} 
\end{teo}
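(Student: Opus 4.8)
The plan is to apply the standard machinery of $\Gamma$-convergence (the fundamental theorem relating $\Gamma$-convergence to convergence of minima) together with the equicoercivity furnished by the structural assumptions. First I would show that the functionals $\Xi^\varphi_h$ and $\Xi^\varphi$ are equicoercive on $L^p(\Omega)$. The key point is that, using the lower growth bound ($I_3$) on $f_h$ and the lower bound in \eqref{crescitag} on $g$, for any $u\in W^{1,p}_{X,\varphi}(\Omega)$ one estimates
\[
\Xi^\varphi_h(u)\ge c_0\int_\Omega|Xu|^p\,dx+d_0\int_\Omega|u|^p\,dx-\int_\Omega(a_0+b_0)\,dx\,.
\]
Writing $u=\varphi+v$ with $v\in W^{1,p}_{X,0}(\Omega)$ and invoking the Poincaré inequality \eqref{PoincareW1p0for}, together with the strict inequality $-c_0c_{p,\Omega}<d_0$ in \eqref{constg}, one obtains a bound of the form $\Xi^\varphi_h(u)\ge \alpha\|Xv\|_{L^p}^p-\beta$ for constants $\alpha>0$, $\beta$ independent of $h$; a further application of Poincaré then controls $\|v\|_{L^p}$ and hence $\|u\|_{W^{1,p}_X}$. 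This shows that sublevel sets of the $\Xi^\varphi_h$ are bounded in $W^{1,p}_X(\Omega)$, uniformly in $h$, which by reflexivity of $W^{1,p}_X(\Omega)$ (for $1<p<\infty$) and compactness of the embedding $W^{1,p}_X(\Omega)\hookrightarrow L^p(\Omega)$ (a consequence of (H1)--(H3), as in the cited references) gives relative compactness in the strong $L^p$ topology.

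Next I would verify that the sequence $\Xi^\varphi_h$ $\Gamma$-converges to $\Xi^\varphi$ in $L^p(\Omega)$. Since $(F_h)_h$ $\Gamma$-converges to $F$ by hypothesis, and $G$ is continuous on $L^p(\Omega)$ (being a Nemytskii-type functional with the two-sided $p$-growth \eqref{crescitag}), the sum $F_h+G$ $\Gamma$-converges to $F+G$ by the stability of $\Gamma$-convergence under continuous perturbations. To add the indicator $\mathbbm{1}_\varphi$ I would use that $W^{1,p}_{X,\varphi}(\Omega)$ is (strongly, hence sequentially) closed in $L^p(\Omega)$ on the sublevel sets where $F_h$ is finite — more precisely, if $u_h\to u$ in $L^p$ with $\sup_h F_h(u_h)<\infty$ then $\sup_h\|Xu_h\|_{L^p}<\infty$, so $u\in W^{1,p}_X(\Omega)$ and, because $u_h-\varphi\in W^{1,p}_{X,0}(\Omega)$ with uniformly bounded $X$-gradients, $u-\varphi\in W^{1,p}_{X,0}(\Omega)$ as well (the space $W^{1,p}_{X,0}$ being weakly closed). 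The recovery sequences for $F+G$ at a point of $W^{1,p}_{X,\varphi}(\Omega)$ can be taken inside $W^{1,p}_{X,\varphi}(\Omega)$: this is the delicate part and I expect it to be the main obstacle. One needs to know that the recovery sequence produced by the $\Gamma$-convergence $F_h\to F$ respects the boundary datum, or can be modified to do so without increasing the limsup; this is a classical but technically nontrivial fact, typically proved by a De Giorgi--type cut-off / slicing argument near $\partial\Omega$ exploiting the $p$-growth bound ($I_3$) and the fundamental estimate for the functionals $F_h$. Assuming this (it is exactly the sort of statement available from \cite{MPSC1}), we conclude $\Xi^\varphi_h\xrightarrow{\Gamma}\Xi^\varphi$.

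With equicoercivity and $\Gamma$-convergence in hand, part $(i)$ follows from the fundamental theorem of $\Gamma$-convergence: each $\Xi^\varphi_h$ is lower semicontinuous on $L^p$ (as a sum of an l.s.c. functional, a continuous functional, and an l.s.c. indicator), is coercive, hence attains its minimum; the same holds for the $\Gamma$-limit $\Xi^\varphi$; and $\min\Xi^\varphi=\lim_h\min\Xi^\varphi_h$, giving \eqref{convergenceminima}. For part $(ii)$, let $(u_h)_h$ be minimizers of $\Xi^\varphi_h$. Since $\Xi^\varphi_h(u_h)=\min\Xi^\varphi_h$ is a convergent (hence bounded) sequence, equicoercivity shows $(u_h)_h$ lies in a fixed sublevel set, so it is bounded in $W^{1,p}_X(\Omega)$; by reflexivity and compact embedding, up to a not-relabeled subsequence $u_h\rightharpoonup\bar u$ weakly in $W^{1,p}_X(\Omega)$ and $u_h\to\bar u$ strongly in $L^p(\Omega)$, proving \eqref{uhtouLp}, with $\bar u\in W^{1,p}_{X,\varphi}(\Omega)$ by the closedness argument above. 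Finally, the standard fact that limits of (almost) minimizers along a $\Gamma$-converging sequence are minimizers of the $\Gamma$-limit yields $\Xi^\varphi(\bar u)=\min_{L^p}\Xi^\varphi$, i.e. \eqref{barumin}; concretely, $\Xi^\varphi(\bar u)\le\liminf_h\Xi^\varphi_h(u_h)=\lim_h\min\Xi^\varphi_h=\min\Xi^\varphi$ by the $\Gamma$-liminf inequality and part $(i)$, while the reverse inequality is trivial. This completes the proof.
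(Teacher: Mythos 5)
Your proposal follows essentially the same route as the paper's proof: $\Gamma$-convergence of the constrained functionals (the boundary-datum recovery-sequence step you defer to the literature is exactly the paper's Theorem \ref{convboundary}), continuity of $G$ in $L^p(\Omega)$ together with stability of $\Gamma$-convergence under continuous perturbations (\cite[Proposition 6.21]{DM}), equicoercivity via the growth bounds $(I_3)$, \eqref{crescitag} and the Poincar\'e inequality with the strict condition $-c_0c_{p,\Omega}<d_0$ (the paper's Lemma \ref{PoincareW1pvarphi} and Lemma \ref{coerc}), and the fundamental theorem of $\Gamma$-convergence for both $(i)$ and $(ii)$. Two imprecisions in your write-up should be corrected, although both are repaired within your own setup. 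First, under (H1)--(H3) alone the embedding $W^{1,p}_X(\Omega)\hookrightarrow L^p(\Omega)$ is compact only locally (Proposition \ref{compemb}); global compactness of the full space requires extra regularity of $\Omega$, e.g.\ that it be a uniform domain (Theorem \ref{immersionnot0}). This does not damage your argument because every sequence you use lies in $W^{1,p}_{X,\varphi}(\Omega)$, so you should apply the compact embedding of $W^{1,p}_{X,0}(\Omega)$ (Theorem \ref{immersion}) to $u_h-\varphi$, exactly as the paper does in Lemma \ref{coerc}. Second, $\mathbbm{1}_{\varphi}$ is not lower semicontinuous on $L^p(\Omega)$ by itself, since $W^{1,p}_{X,\varphi}(\Omega)$ is not closed in the $L^p$ topology; the lower semicontinuity and coercivity needed for existence of minimizers of $\Xi^{\varphi}_h$ must be argued for the sum, using the coercivity of $F_h$ to extract weak $W^{1,p}_X$-limits and the weak closedness of $W^{1,p}_{X,0}(\Omega)$ --- this is the content of the paper's Theorem \ref{Corollary 3.1.2} and Lemma \ref{coerc}, and your first paragraph already contains the estimate that makes it work.
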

The second main result deals with the convergence of the momenta  associated with the  sequence of functionals  $(F_h)_h$ satisfying \eqref{GammalimitFhintro}. The result is inspired by \cite{ADMZ2} and it is a partial extension of those results to integral functionals depending on vector fields.
\begin{teo}\label{convmom} 
Let $\Om$ be a bounded open set, $1<p<\infty$ and let $X$ satisfy (LIC) condition. Let $f_h,f\in I_{m,p}(\Om,c_0,c_1,a_0,a_1)$, let $F_h,F$ be, respectively, the functionals in \eqref{F_ph} and \eqref{GammalimitFintro}, satisfying \eqref{GammalimitFhintro}, and define $\mathcal F_h,\mathcal{F}:\,L^p(\Om)^m\to\bbR$ as
\begin{align}\label{calFh}
    \mathcal F_h(\Phi):=\,\int_{\Om}f_h(x,\Phi(x))\,dx
\end{align}
and
\begin{align}\label{calF}
    \mathcal F(\Phi):=\,\int_{\Om}f(x,\Phi(x))\, dx
\end{align}
for any $\Phi\in L^p(\Om)^m$ and for any $h\in\mathbb{N}$.
Assume that:
\begin{itemize}
\item[$(i)$]
fixed $0\leq\alpha\leq\min\{1,p-1\}$, there exist a positive constant $\overline{c}$ and a nonnegative function $b\in L^p(\Om)$ such that
\begin{align*}
|\nabla_\eta f_h(x,\eta_1)-\nabla_\eta f_h(x,\eta_2)|\le\,\overline{c}\, |\eta_1-\eta_2|^{\alpha}\left(|\eta_1|+|\eta_2|+b(x)\right)^{p-1-\alpha}
\end{align*}
for a.e. $x\in\Om$, for any $\eta_1,\,\eta_2\in\Rm$ and for any $h\in\mathbb{N}$;
\item[$(ii)$]
the map $\Rm\ni\eta\mapsto f_h(x,\eta)$ belongs to $\ci^1(\mathbb{R}^m)$ for a.e. $x\in\Om$ and for any $h\in\mathbb{N}$;
\item[$(iii)$] the map $\Rm\ni\eta\mapsto f(x,\eta)$ belongs to $\ci^1(\Rm)$ for a.e. $x\in\Om$;
\item[$(iv)$] there exist $u_h,u\in W^{1,p}_X(\Om)$ such that
\[
u_h\to u\text{ in }L^p(\Om)\quad\text{and} \quad\mathcal F_h(Xu_h)\to \mathcal F(Xu)\quad\text{as }h\to\infty\,.
\]
\end{itemize}
Then, the convergence of momenta associated with $(F_h)_h$ holds, that is,
\begin{equation}\label{convmom0}
\partial_\Phi \mathcal F_h(Xu_h)=\,\nabla_\eta f_h(\cdot,Xu_h)\to\,\nabla_\eta f(\cdot,Xu)=\partial_\Phi\mathcal F(Xu)
\end{equation}
weakly in $L^{p'}(\Om)^m$, where $\partial_\Phi\mathcal{F}_h$ and $\partial_\Phi\mathcal F$ denotes, respectively, the Gateaux derivatives of functionals $\mathcal{F}_h$ and $\mathcal F$ (see \eqref{GderF}).
\end{teo}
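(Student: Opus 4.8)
The plan is to transpose to the $X$-gradient setting the scheme of the corresponding Euclidean result of \cite{ADMZ2}, combining four ingredients: the convexity of the integrands; the $\Gamma$-$\liminf$ inequality for $(F_h)_h$ coming from \eqref{GammalimitFhintro}; the local H\"older-type control (i) on $\nabla_\eta f_h$; and hypothesis (iv) (which, since $u_h,u\in W^{1,p}_X(\Om)$, says exactly that $(u_h)_h$ is a recovery sequence for $u$). Write $\sigma_h:=\nabla_\eta f_h(\cdot,Xu_h)$ and $\sigma:=\nabla_\eta f(\cdot,Xu)$, so that \eqref{convmom0} is the assertion $\sigma_h\rightharpoonup\sigma$ weakly in $L^{p'}(\Om)^m$.

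\emph{Step 1: a priori bounds and reduction.} The lower bound in ($I_3$) and (iv) give $\sup_h\|Xu_h\|_{L^p(\Om)^m}<\infty$, and since in addition $u_h\to u$ in $L^p(\Om)$ and the $c_{ji}$ are Lipschitz, integrating by parts against $C^\infty_c(\Om)$ yields $Xu_h\rightharpoonup Xu$ weakly in $L^p(\Om)^m$. Applying (i) with $\eta_2=0$, and noting that convexity together with ($I_3$) forces $|\nabla_\eta f_h(x,0)|\le C\,(a_0(x)+a_1(x))^{1/p'}$ — minimise $t\mapsto t^{-1}(c_1t^p+a_0(x)+a_1(x))$ in the subgradient inequality $f_h(x,\pm te)\ge f_h(x,0)\pm t\,\nabla_\eta f_h(x,0)\cdot e$ — so that $\nabla_\eta f_h(\cdot,0)$ is bounded in $L^{p'}(\Om)^m$, and using $|Xu_h|^{\alpha}(|Xu_h|+b)^{p-1-\alpha}\le(|Xu_h|+b)^{p-1}$, we obtain $\sup_h\|\sigma_h\|_{L^{p'}(\Om)^m}<\infty$. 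It therefore suffices to prove that every weak-$L^{p'}$ limit point $\sigma_\ast$ of $(\sigma_h)_h$ equals $\sigma$ a.e.; fix a subsequence, not relabelled, with $\sigma_h\rightharpoonup\sigma_\ast$.

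\emph{Step 2: convexity estimates and identification along $X$-gradients.} By convexity of $f_h(x,\cdot)$ we have $\mathcal F_h(Xu_h+tw)\ge\mathcal F_h(Xu_h)+t\int_\Om\sigma_h\cdot w\,dx$ for every $w\in L^p(\Om)^m$ and $t\in\bbR$, while the identity $f_h(x,Xu_h+tw)-f_h(x,Xu_h)=\int_0^1\nabla_\eta f_h(x,Xu_h+stw)\cdot tw\,ds$ (legitimate by (ii)), combined with (i), the relation $(p-1-\alpha)p'=p-\alpha p'$ and H\"older's inequality, gives for $t>0$ the matching bound
\[
\Bigl|\,\mathcal F_h(Xu_h+tw)-\mathcal F_h(Xu_h)-t\!\int_\Om\sigma_h\cdot w\,dx\,\Bigr|\ \le\ C(w)\,t^{1+\alpha},
\]
with $C(w)$ independent of $h$ (here $\sup_h\|Xu_h\|_{L^p}<\infty$ and $b\in L^p(\Om)$ enter). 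Taking $w=X\psi$, $\psi\in C^\infty_c(\Om)$, and using that $\mathcal F_h(Xu_h+tX\psi)=F_h(u_h+t\psi)$ with $u_h+t\psi\to u+t\psi$ in $L^p(\Om)$, the $\Gamma$-$\liminf$ inequality and the upper bound above give, for $t>0$,
\[
\mathcal F(Xu+tX\psi)=F(u+t\psi)\le\liminf_h F_h(u_h+t\psi)\le\mathcal F(Xu)+t\!\int_\Om\sigma_\ast\cdot X\psi\,dx+C\,t^{1+\alpha};
\]
dividing by $t$, letting $t\to0^+$ and using (iii) with dominated convergence to pass to the limit in the difference quotients, we obtain $\int_\Om\sigma\cdot X\psi\,dx\le\int_\Om\sigma_\ast\cdot X\psi\,dx$, and replacing $\psi$ by $-\psi$, $\int_\Om(\sigma-\sigma_\ast)\cdot X\psi\,dx=0$ for all $\psi\in C^\infty_c(\Om)$.

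\emph{Step 3: from $X$-gradients to all test fields --- the main obstacle.} Step 2 controls $\sigma-\sigma_\ast$ only against $X$-gradients, which do \emph{not} span $L^p(\Om)^m$, so a further argument is needed. One exploits (iv) in its local form: the localized $\Gamma$-$\liminf$ inequality $\liminf_h\int_V f_h(x,Xu_h)\,dx\ge\int_V f(x,Xu)\,dx$ for all open $V\subseteq\Om$ (part of the $\Gamma$-compactness result), together with the global equality $\mathcal F_h(Xu_h)\to\mathcal F(Xu)$ and a De Giorgi--Letta superadditivity (complementation) argument, forces $f_h(\cdot,Xu_h)\,dx\rightharpoonup f(\cdot,Xu)\,dx$ weakly-$*$ as Radon measures on $\Om$, and similarly $f_h(\cdot,Xv_h)\,dx\rightharpoonup f(\cdot,Xv)\,dx$ along a recovery sequence $(v_h)$ of any $v\in W^{1,p}_X(\Om)$. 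The genuinely delicate point is then to pass to the limit in the cross terms $\int_\Om\varphi\,\sigma_h\cdot(Xv_h-Xu_h)\,dx$ ($\varphi\in C_c(\Om)$, $\varphi\ge0$) --- a product of two merely weakly convergent $L^{p'}/L^p$ sequences --- which is carried out, exactly as in \cite{ADMZ2}, using the weak-$*$ convergence of the energy densities just described; granting this, one inserts these convergences into the pointwise subgradient inequality $f_h(x,\eta)\ge f_h(x,Xu_h)+\sigma_h\cdot(\eta-Xu_h)$ tested against $\varphi\ge0$ with $\eta=Xv_h$, and then, using that by (LIC) the $X$-gradients saturate $\mathbb R^m$ pointwise a.e., a measurable selection argument, and the convexity of $f(x,\cdot)$ with (iii), identifies $\sigma_\ast(x)$ as the unique subgradient of $f(x,\cdot)$ at $Xu(x)$, i.e.\ $\sigma_\ast=\sigma$ a.e., which completes the proof. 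I expect the handling of these cross terms --- the only place where compensated-compactness-type information is really required --- to be the main obstacle.
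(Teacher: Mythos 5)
Your Steps 1--2 are sound, but they only yield $\int_\Om(\sigma-\sigma_\ast)\cdot X\psi\,dx=0$ for $\psi\in\ci^\infty_c(\Om)$, i.e.\ information in $X$-gradient directions, and your Step 3 --- the passage from $X$-gradients to arbitrary test fields --- is where the proof actually lives, and it is not carried out. You propose to handle the cross terms $\int_\Om\varphi\,\sigma_h\cdot(Xv_h-Xu_h)\,dx$ \q{exactly as in \cite{ADMZ2}} via weak-$*$ convergence of the energy densities; but this is a div-curl/compensated-compactness type limit of a product of two merely weakly convergent sequences, and precisely this tool is what the present setting lacks: the paper emphasizes (in the discussion of $H$-convergence) that compensated compactness is not known to hold for general families $X$ satisfying only (LIC) and (H1)--(H3), and its whole strategy is designed to avoid it. Moreover the attribution is off: the actual argument of \cite{ADMZ2} (and of the paper) does not go through cross terms at all, so \q{granting this} leaves a genuine gap rather than a routine citation.

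The missing idea is Theorem \ref{pertfunct}: for an \emph{arbitrary} fixed $\Phi\in L^p(\Om)^m$ the perturbed functionals $u\mapsto\int_A f_h(x,Xu+\Phi)\,dx$ $\Gamma$-converge (along the same subsequence realizing \eqref{GammalimitFhintro}) to $u\mapsto\int_A f(x,Xu+\Phi)\,dx$, with the \emph{same} limit integrand $f$. Its proof is where (LIC) really enters: one writes $\Phi(x)=C(x)\tilde\Phi(x)$ a.e.\ (possible since $C(x)$ has rank $m$ on $\Om_X$), treats constant and piecewise constant $\tilde\Phi$ --- where the perturbation is the $X$-gradient of an affine function, so the unperturbed $\Gamma$-limit applies --- and then passes to general $\Phi$ by a Lipschitz-in-$\Phi$ estimate of $G^\Phi$. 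With this result, the liminf inequality $\mathcal F(Xu+t\Psi)\le\liminf_h\mathcal F_h(Xu_h+t\Psi)$ holds for \emph{every} direction $\Psi\in L^p(\Om)^m$, not only $\Psi=X\psi$; your own Step 2 computation (or, as in the paper, a difference-quotient diagonalization, the mean value theorem and \cite[Lemma 4.4]{ADMZ2} via hypothesis (i)) then gives $\langle\partial_\Phi\mathcal F(Xu),\Psi\rangle\le\liminf_h\langle\partial_\Phi\mathcal F_h(Xu_h),\Psi\rangle$ for all $\Psi$, and applying this to $\pm\Psi$ yields the weak convergence \eqref{convmom0} directly, with no identification of pointwise limit points, no measurable selection, and no Step 3 at all.
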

As a consequence of Theorems \ref{convminimiz} and \ref{convmom}, we can infer the convergence of both minimizers and momenta associated with minimization problems \eqref{minpb}.
\begin{cor}\label{corconvmom} 
Let $\Omega$ be open, bounded and connected, let $1<p<\infty$ and let $X$ satisfy (LIC) condition and conditions (H1), (H2) and (H3). Let $f_h,f\in I_{m,p}(\Omega,c_0,c_1,a_0,a_1)$, let $g$ satisfy \eqref{crescitag} and \eqref{constg}, let $G$ be the functional \eqref{Gdatum} and let $F_h,\mathcal{F}_h,F,\mathcal{F}$ satisfy the hypotheses of Theorem \ref{convmom}. Fixed $\varphi\in W^{1,p}_{X}(\Omega)$, consider functionals $\Xi_h^{\varphi},\Xi^{\varphi}$ defined in \eqref{sumsum}.
If $(u_h)_h$ is a sequence of minimizers of $(\Xi_h^{\varphi})_h$ then, up to subsequences, there exists a minimizer $u$ of $\Xi^{\varphi}$ such that 
\[
u_h\to u\text{ weakly in }W^{1,p}_X(\Omega)\text{ and strongly in }L^p(\Om)\,.
\]
Moreover, \eqref{convmom0} holds.
\end{cor}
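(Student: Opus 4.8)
The plan is to derive Corollary~\ref{corconvmom} by combining Theorems~\ref{convminimiz} and~\ref{convmom}, the only real work being to verify that the hypotheses of Theorem~\ref{convmom} --- in particular the convergence hypothesis $(iv)$ --- are actually met along the minimizing sequence produced by Theorem~\ref{convminimiz}. First I would apply Theorem~\ref{convminimiz}: its hypotheses are precisely those assumed here (namely $\Omega$ bounded, connected, open, $1<p<\infty$, $X$ satisfying (LIC) and (H1)--(H3), $f_h,f\in I_{m,p}(\Omega,c_0,c_1,a_0,a_1)$ with $F_h\xrightarrow{\Gamma} F$ in the strong $L^p$ topology, and $g$ satisfying \eqref{crescitag}--\eqref{constg}); note that the $\Gamma$-convergence $F_h\to F$ is supplied by the assumption that $F_h,F$ satisfy the hypotheses of Theorem~\ref{convmom}, which includes \eqref{GammalimitFhintro}. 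Theorem~\ref{convminimiz}$(ii)$ then yields, up to a not-relabeled subsequence, a limit $\bar u\in W^{1,p}_{X,\varphi}(\Omega)$ with $u_h\to\bar u$ weakly in $W^{1,p}_X(\Omega)$ and strongly in $L^p(\Omega)$, and with $\Xi^{\varphi}(\bar u)=\min_{L^p(\Omega)}\Xi^{\varphi}$; in particular $\bar u$ is a minimizer of $\Xi^{\varphi}$, which is the first assertion of the Corollary with $u:=\bar u$.

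The substantive step is then to feed this $\bar u$ into Theorem~\ref{convmom} in order to obtain \eqref{convmom0}. Hypotheses $(i)$, $(ii)$, $(iii)$ of Theorem~\ref{convmom} are carried over verbatim from the present statement (they are part of ``$F_h,\mathcal F_h,F,\mathcal F$ satisfy the hypotheses of Theorem~\ref{convmom}''), so what remains is hypothesis $(iv)$: I must produce sequences (which I will take to be the very same $u_h$ and $u=\bar u$) with $u_h\to\bar u$ in $L^p(\Omega)$ --- already established --- and, crucially, $\mathcal F_h(Xu_h)\to\mathcal F(X\bar u)$, i.e.\ $F_h(u_h)\to F(\bar u)$. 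This is where the energy bookkeeping has to be done carefully. The plan is: by the $\liminf$ inequality of $\Gamma$-convergence applied along $u_h\to\bar u$ in $L^p$, $\liminf_h F_h(u_h)\ge F(\bar u)$, and similarly, since $G$ is strongly $L^p$-continuous under \eqref{crescitag} (it is a continuous perturbation), $G(u_h)\to G(\bar u)$, and $\mathbbm 1_{\varphi}(u_h)=0=\mathbbm 1_{\varphi}(\bar u)$ eventually because $u_h\in W^{1,p}_{X,\varphi}(\Omega)$; hence $\liminf_h\Xi^{\varphi}_h(u_h)\ge\Xi^{\varphi}(\bar u)$. On the other hand \eqref{convergenceminima} gives $\lim_h\Xi^{\varphi}_h(u_h)=\lim_h\min\Xi^{\varphi}_h=\min\Xi^{\varphi}=\Xi^{\varphi}(\bar u)$, so in fact $\lim_h\Xi^{\varphi}_h(u_h)=\Xi^{\varphi}(\bar u)$ with the full limit existing. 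Subtracting the convergent pieces $G(u_h)\to G(\bar u)$ (and the vanishing indicator terms) forces $\lim_h F_h(u_h)=F(\bar u)$, that is $\mathcal F_h(Xu_h)\to\mathcal F(X\bar u)$, which is exactly hypothesis $(iv)$.

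Having verified $(iv)$, Theorem~\ref{convmom} applies directly and yields $\nabla_\eta f_h(\cdot,Xu_h)\to\nabla_\eta f(\cdot,X\bar u)$ weakly in $L^{p'}(\Omega)^m$, i.e.\ \eqref{convmom0}, completing the proof. I expect the main obstacle to be precisely the decoupling argument in the previous paragraph: one must be careful that no spurious subsequence extraction is needed for the energies (the full limit $\lim_h F_h(u_h)$ must exist, not merely a $\liminf$), and this is guaranteed only because \eqref{convergenceminima} provides an \emph{upper} bound $\limsup_h\Xi^{\varphi}_h(u_h)\le\Xi^{\varphi}(\bar u)$ that matches the lower bound; one then has to check that the strong $L^p$-continuity of $G$ (hence the exact convergence $G(u_h)\to G(\bar u)$, not just $\liminf$) is legitimately available from \eqref{crescitag} together with the Carath\'eodory hypothesis on $g$, and that the subsequence along which Theorem~\ref{convminimiz}$(ii)$ selects $\bar u$ is the one along which all these limits are taken. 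A minor point to record is that all convergences are along a single common (not relabeled) subsequence, which is consistent since Theorem~\ref{convmom} only requires the existence of \emph{some} admissible sequences $u_h,u$.
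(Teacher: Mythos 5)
Your proposal is correct and follows essentially the same route as the paper: apply Theorem~\ref{convminimiz} to get the convergence of minimizers and of the minimum values, use the strong $L^p$-continuity of $G$ (and the vanishing of the indicator terms) to isolate $F_h(u_h)\to F(u)$, i.e.\ hypothesis $(iv)$ of Theorem~\ref{convmom}, and then invoke Theorem~\ref{convmom} for \eqref{convmom0}. The only difference is cosmetic: the paper obtains $\lim_h F_h(u_h)=F(u)$ directly by subtracting the convergent term $G(u_h)$ from $\Xi^{\varphi}_h(u_h)\to\Xi^{\varphi}(u)$, so your additional $\Gamma$-$\liminf$/$\limsup$ sandwich is harmless but not needed.
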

We will also  provide two interesting applications of the previous results  to the periodic homogenization of functionals in Carnot groups and the $H$-convergence  for linear differential operators of the second order depending on $X$. 

Let us  recall that $\Gamma$-convergence for functionals in \eqref{F_ph} has been studied in the framework of {Dirichlet forms} \cite{Fu,MR}, but for special  integrand functions $f$ and $X$-gradient satisfying the H\"ormander condition (see, for instance, \cite{BPT2,BT,Mo} and references therein). We also point out that $\Gamma$-convergence for functionals defined in Cheeger-Sobolev metric measure spaces  has been also studied (see \cite{AHM} and references therein). 

Homogenization in Carnot groups has been intensively studied so far (see for instance, \cite{BMT,BPT1,BPT2,FGVN,FT,MV}). Here we are interested in the recent paper \cite{DDMM}, where a $\Gamma$-convergence result for the periodic homogenization of functionals in Heisenberg groups has been proved  (see Theorem \ref{resultDDMM}). By using this result, we prove the convergence  of minimizers for minimization problems \eqref{minpb}, for each boundary datum $\varphi\in W^{1,p}_X(\Om)$, as well as the convergence of the associated momenta (see Corollary \ref{corconvmomhomog}). 

The $H$-convergence for subelliptic PDEs has been also studied in the setting of Carnot groups (see \cite{BFT,BFTT,FTT,Ma2}). In the previous papers, the main tool for showing a compactness result for $H$-convergence is the nontrivial extension to Carnot groups of the so-called {\it compensated compactness} \cite{BFTT}, originally introduced in the classical Euclidean setting by Murat and Tartar \cite{Mu}. Here we get a compactness result for $H$-convergence in a broader setting than a Carnot group (see Theorem \ref{FDC}), by using Theorem \ref{convmom}, without applying the compensated compactness, since it is not clear whether it still holds in our framework.

The plan of the paper is as follows: in section \ref{sec2}, we introduce and study  the Sobolev spaces associated with the $X$-gradient. In section \ref{sec3}, we prove a criterion for the $\Gamma$-convergence with respect to the weak convergence in spaces  $W^{1,p}_{X}(\Om)$  and  $W^{1,p}_{X,0}(\Om)$ (see Theorem \ref{GammaWeak}). We also recall two key  results of $\Gamma$-compactness for integral functions depending on $X$, with respect to $L^p(\Omega)$-topology: the former is a light extension of a result already showed in \cite{MPSC1} (see Theorem \ref{mainthm}) and the latter is  a $\Gamma$-compactness result  including Dirichlet boundary conditions (see Theorem \ref{convboundary}). In section \ref{sec4}, we prove Theorems \ref{convminimiz} and \ref{convmom} and Corollary \ref{corconvmom}. Finally, in sections \ref{sec5} and \ref{sec6}, we apply Theorems \ref{convminimiz} and \ref{convmom} and Corollary \ref{corconvmom} to the case of periodic homogenization in  Heisenberg groups (see Corollary \ref{corconvmomhomog}) and to the $H$-convergence 
for linear differential operators of the second order depending on $X$ (see Theorem \ref{FDC}), respectively.
\vskip5pt

\noindent{\it Acknowledgements.} We thank G. Buttazzo and B. Franchi for useful suggestions and discussions on these topics. We also thank the referee of the paper \cite{MPSC1} and M. Caponi for  useful suggestions on the proof of Theorem \ref{GammaWeak}.

%%%%%%%%%%%%%%%%%%%%%%%%%%%%%

\section{Functional setting}\label{sec2}

%%%%%%%%%%%%%%%%%%%%%%%%%%%%%

Throughout the paper, $\Omega\subset\Rn$ is a fixed open set and $\overline{\mathbb{R}}=[-\infty,\infty]$. If $v, w\in\Rn$, we denote
by $|v|$ and $\scalare v w$ the Euclidean norm and the scalar product,
respectively. 
If $\Om$ and $\Om'$ are subsets of $\Rn$, then $\Om'\Subset \Om$ means that $\Om'$ is compactly contained in $\Om$. Moreover, $B(x,r)$ is the open Euclidean ball of radius $r$ centered at $x$. 
If $A\subset \Rn$, $\chi_A$ and $\mathbbm{1}_A$ are, respectively, the characteristic and the indicator function of $A$, $|A|$ is its $n$-dimensional Lebesgue measure $\mathcal L^n$ and, by notation {\it a.e. $x\in A$}, we will simply mean $\mathcal L^n$-a.e. $x\in A$. 
In the sequel, we denote by $\Co k (\Omega)$ the space of $\bbR$-valued functions $k$ times
continuously differentiable and by $\Co k_c (\Omega)$  the subspace of $\Co k (\Omega)$ whose functions have support compactly contained in $\Omega$. 
\begin{defi}
For any $u\in\L 1(\Om)$ we define $Xu$ 
as an element of ${\mathcal D}'(\Om;\Rm)$ as follows
\[
\begin{split}
Xu(\psi):&=(X_1u(\psi_1),\dots,X_mu(\psi_m))\\
         &=-\int_\Om u\left(\sum_{i=1}^n\partial_i( c_{1i}\,\psi_1),\dots, \sum_{i=1}^n\partial_i( c_{mi}\,\psi_m)\right)dx
\end{split}
\]
for any $\psi=\vettore {\psi} m \in\Czero^\infty(\Om;\Rm)$.
\end{defi}
If we set $X^T\psi:=(X^T_1\psi_1,\dots, X^T_m\psi_m)$, with
\begin{equation}\label{XjT}
X_j^T\varphi:=\,-\sum_{i=1}^n\partial_i( c_{ji}\,\varphi)=\,-\left(\mathrm {div}(X_j)+X_j\right)\varphi
\end{equation}
for any $\varphi\in\ci^\infty_c(\Om)$ and $j=1,\dots,m$, then 
the aspect of the previous definition is even more familiar
\[
Xu(\psi)=\,\int_\Om u\, X^T\psi\,dx\quad\text{for any }\psi \in\Czero^\infty(\Om;\Rm)\,.
\]
\begin{oss}
By the well-known extension result for Lipschitz functions, without loss of generality, we can assume that vector fields' coefficients $c_{ji}\in Lip_{\rm loc}(\Rn)$ for any $j=1,\dots,m$ and $i=1,\dots,n$.
\end{oss}
\begin{defi}\label{Definition 1.1.1}
For $1\leq p\leq\infty$ we set
\[
\begin{split}
{W}_X^{1, p}(\Om)&:=\left\{ u\in L^ p(\Om):X_j u\in L^ p(\Om)\ {\hbox{\rm
for }} j=1,\dots,m\right\}\\
{W}_{X;loc}^{1, p}(\Om)&:=\left\{u: u|_{\Om'}\in W_X^{ 1,p}(\Om')\text{ for every open set }\Om'\Subset\Om\right\}.
\end{split}
\]
\end{defi}
\begin{oss} Since vector fields $X_j$ have  locally Lipschitz continuous coefficients, $\partial_{i }c_{ji}\in L_{\rm loc}^\infty(\Rn)$ for any $j=1,\dots,m$ and $i=1,\dots,n$.
Then, by definition,
\begin{equation}\label{inclclassSobsp}
W^{1,p}(\Om)\subset\W1 p(\Om)\quad\forall\,p\in [1,\infty]
\end{equation}
for any open bounded set $\Omega\subset\Rn$. Moreover, for any $u\in W^{1,p}(\Omega)$
\begin{equation*}
Xu(x)=\,C(x)\,Du(x)\quad\text{ for a.e. }x\in\Om\,,
\end{equation*}
where $W^{1,p}(\Om)$ denotes the classical Sobolev space, or, equivalently, the space $\W1 p(\Om)$ associated to
\[
X=\,D:=\,(\partial_1,\dots,\partial_n)\,.
\]
It is easy to see that inclusion \eqref{inclclassSobsp} can be strict and turns  out to be continuous. As well, there is the inclusion
\begin{equation*}
W^{1,p}_{\rm loc}(\Om)\subset W_{X;{\rm loc}} ^{1,p}(\Om)\quad\forall\,p\in [1,\infty]\,.
\end{equation*}
\end{oss}
The following proposition is proved in \cite{FS} and \cite[Lemma 2.3.29]{Ma}.
\begin{prop}\label{psaw} $\W 1 p (\Om)$ endowed with the norm
\[
\|u\|_{W^{1,p}_X(\Omega)}:=\left(\int_{\Omega} |u|^p\, dx + \int_{\Omega} |Xu|^p\, dx\right)^\frac{1}{p}
\]
is a reflexive Banach space if $1<p<\infty$.
\medskip

\noindent Moreover, if $p>1$, then functional $\|\cdot\|^p_{W^{1,p}_X(\Omega)}:W^{1,p}_X(\Omega)\to[0,\infty)$ is lower semicontinuous and sequentially coercive in the weak topology of $W^{1,p}_X(\Omega)$.
\end{prop}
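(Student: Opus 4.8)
\textbf{Proof plan for Proposition \ref{psaw}.}

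The plan is to deduce everything from the corresponding facts for $L^p$-spaces together with the closedness of the distributional operators $X_j$. For the Banach space structure, I would first check that $\|\cdot\|_{W^{1,p}_X(\Omega)}$ is a genuine norm (the triangle inequality follows from Minkowski's inequality in $L^p$ applied coordinatewise to $u$ and to $Xu=(X_1u,\dots,X_mu)$), and then prove completeness: given a Cauchy sequence $(u_k)_k$ in $W^{1,p}_X(\Omega)$, the sequences $(u_k)_k$ and $(X_ju_k)_k$ are Cauchy in $L^p(\Omega)$, hence converge to some $u$ and $v_j$ in $L^p(\Omega)$; passing to the limit in the identity $\int_\Omega (X_ju_k)\,\psi\,dx=\int_\Omega u_k\,X_j^T\psi\,dx$ for $\psi\in\ci^\infty_c(\Omega)$ — which is legitimate because $X_j^T\psi\in L^{p'}(\Omega)$ by the local Lipschitz (indeed $L^\infty_{\mathrm{loc}}$) bound on $c_{ji}$ and $\partial_ic_{ji}$ — shows $X_ju=v_j$ in $\mathcal D'(\Omega)$, so $u\in W^{1,p}_X(\Omega)$ and $u_k\to u$ in the $W^{1,p}_X$-norm. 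Reflexivity for $1<p<\infty$ is then obtained by the standard embedding trick: the map $u\mapsto(u,X_1u,\dots,X_mu)$ is a linear isometry of $W^{1,p}_X(\Omega)$ onto a closed subspace of $L^p(\Omega)^{m+1}$ (closed precisely by the argument just given), and $L^p(\Omega)^{m+1}$ is reflexive for $1<p<\infty$, so a closed subspace of it is reflexive as well.

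For the second assertion, write $\Psi(u):=\|u\|^p_{W^{1,p}_X(\Omega)}=\|u\|^p_{L^p}+\sum_{j=1}^m\|X_ju\|^p_{L^p}$. Sequential coercivity in the weak topology is immediate: $\Psi(u)\to\infty$ as $\|u\|_{W^{1,p}_X(\Omega)}\to\infty$, and since $W^{1,p}_X(\Omega)$ is reflexive, bounded sequences have weakly convergent subsequences, which is exactly what sequential coercivity requires. For weak lower semicontinuity, suppose $u_k\rightharpoonup u$ in $W^{1,p}_X(\Omega)$; then, composing with the isometric embedding into $L^p(\Omega)^{m+1}$, we get $u_k\rightharpoonup u$ and $X_ju_k\rightharpoonup X_ju$ weakly in $L^p(\Omega)$ for each $j$, and the conclusion follows from the weak lower semicontinuity of the norm in $L^p(\Omega)$ (a consequence of Hahn–Banach, or of the fact that $t\mapsto t^p$ is convex and the $L^p$-norm is convex and strongly continuous, hence weakly lower semicontinuous), applied separately to each of the $m+1$ components and then summed.

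I do not expect any serious obstacle here; the content is essentially bookkeeping, and the only point that requires a small amount of care is the justification that the distributional relation defining $X_ju$ passes to the limit — i.e. that $X_j^T\psi\in L^{p'}_{\mathrm{loc}}(\Omega)$, which is where the local Lipschitz hypothesis on the coefficients $c_{ji}$ (giving $\partial_ic_{ji}\in L^\infty_{\mathrm{loc}}$, as recorded in the remark preceding the statement) is used to close the argument. Since the result is quoted from \cite{FS} and \cite[Lemma 2.3.29]{Ma}, a short proof along these lines, or simply a reference, suffices.
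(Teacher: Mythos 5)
Your argument is correct and is exactly the standard proof behind this statement: completeness and reflexivity via the isometric embedding $u\mapsto(u,X_1u,\dots,X_mu)$ onto a closed subspace of $L^p(\Omega)^{m+1}$ (closedness coming from passing to the limit in the distributional identity, using $X_j^T\psi\in L^{p'}(\Omega)$), and then weak lower semicontinuity and sequential weak coercivity of $\|\cdot\|^p_{W^{1,p}_X(\Omega)}$ from the componentwise $L^p$ facts plus reflexivity. The paper itself gives no proof but quotes \cite{FS} and \cite[Lemma 2.3.29]{Ma}, and your sketch reproduces the argument used there, so it is an adequate justification.
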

\begin{defi}\label{Definition 1.1.3} For $1\leq p\leq\infty$ we set
\[
\HS 1 p (\Om)\ {\hbox{\rm the closure of }} \Co 1 (\Om)\cap {\W 1 p (\Om)} \;
{\hbox{\rm in  }} {\W 1 p (\Om)}\,,
\]
\[
W^{1,p}_{X,0}(\Om)\ {\hbox{\rm the closure of }} \Co 1_c (\Om)\cap {\W 1 p (\Om)} \;
{\hbox{\rm in  }} {\W 1 p (\Om)}\,.
\]
\end{defi}
It is proved in \cite{FS} that the normed spaces $(H_{X}^{1,p}(\Omega),\|\cdot\|_{\W 1p(\Omega)})$ and $(W^{1,p}_{X,0}(\Omega),\|\cdot\|_{\W 1p(\Omega)})$ are Banach spaces for any $1\leq p \leq \infty$.

As for the usual Sobolev spaces, it holds that $ \HS 1 p (\Om)\subset {\W 1 p (\Om)}$. The classical result \lq$H=W$\rq\, of Meyers and Serrin \cite{MS} still holds true for these anisotropic Sobolev spaces as proved, independently, in \cite{FSSC1} and \cite{GN}.
Analogous results, under some additional assumptions, are proved in \cite{FSSC2}, for the weighted case, and in \cite{APS}, where a generalization to metric measure spaces is given.
\begin{teo}\label{Theorem 1.2.3} Let $\Om$ be an open subset of $\Rn$ and $1\leq p <\infty$. Then, 
\[
\HS 1 p (\Om)= {\W 1 p (\Om)}\,.
\]
\end{teo}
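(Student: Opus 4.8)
The plan is to adapt the classical Meyers--Serrin argument to the anisotropic setting, exploiting the fact that the vector fields $X_j$ have locally Lipschitz coefficients. The statement to prove is $H_X^{1,p}(\Om) = W_X^{1,p}(\Om)$, and since the inclusion $H_X^{1,p}(\Om)\subset W_X^{1,p}(\Om)$ is already noted, the work is to show the reverse: every $u\in W_X^{1,p}(\Om)$ can be approximated in the $\|\cdot\|_{W_X^{1,p}(\Om)}$-norm by functions in $\Co{1}(\Om)\cap W_X^{1,p}(\Om)$ (equivalently, by $\Co{\infty}(\Om)$ functions, which suffices). First I would fix $u\in W_X^{1,p}(\Om)$ and $\eps>0$, and take an exhaustion of $\Om$ by open sets $\Om_k\Subset\Om_{k+1}$ with $\bigcup_k\Om_k=\Om$, together with a smooth partition of unity $(\vf_k)_k$ subordinate to the cover $\{\Om_{k+1}\setminus\overline{\Om_{k-1}}\}_k$, so that $\sum_k\vf_k\equiv 1$ on $\Om$ and each $\vf_k$ has compact support in $\Om$.

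The core estimate is local: for each $k$, the function $\vf_k u$ has compact support in $\Om$ and lies in $W_X^{1,p}$, so I mollify it, writing $(\vf_k u)\ast\rho_{\sigma_k}$ for a standard mollifier $\rho_{\sigma_k}$ with $\sigma_k$ small enough that the support stays inside $\Om$. The key point is that mollification commutes with $X_j$ up to a controllable error: since $X_j(\vf_k u) = \vf_k X_j u + (X_j\vf_k)u \in L^p$ and the coefficients $c_{ji}$ are Lipschitz (hence in $L^\infty_{\rm loc}$ with $\partial_i c_{ji}\in L^\infty_{\rm loc}$), one has
\[
X_j\big((\vf_k u)\ast\rho_{\sigma_k}\big) - \big(X_j(\vf_k u)\big)\ast\rho_{\sigma_k} \to 0 \quad\text{in } L^p
\]
as $\sigma_k\to 0$, by the Friedrichs commutator lemma (the commutator $[X_j,\ast\rho_\sigma]$ applied to an $L^p$ function tends to $0$ in $L^p$ precisely because $c_{ji}$ is Lipschitz). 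Combined with the classical fact that $(\vf_k u)\ast\rho_{\sigma_k}\to\vf_k u$ and $(X_j(\vf_k u))\ast\rho_{\sigma_k}\to X_j(\vf_k u)$ in $L^p$, I can choose $\sigma_k$ so small that $\|(\vf_k u)\ast\rho_{\sigma_k} - \vf_k u\|_{W_X^{1,p}(\Om)} < \eps\, 2^{-k}$. Setting $v:=\sum_k (\vf_k u)\ast\rho_{\sigma_k}$, the sum is locally finite (so $v\in\Co{\infty}(\Om)$), and $\|v-u\|_{W_X^{1,p}(\Om)} \le \sum_k\|(\vf_k u)\ast\rho_{\sigma_k}-\vf_k u\|_{W_X^{1,p}(\Om)} < \eps$, which proves the density and hence the theorem.

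The main obstacle is the Friedrichs commutator estimate: one must verify carefully that for a first-order operator $X_j=\sum_i c_{ji}\partial_i$ with Lipschitz coefficients, the commutator with mollification applied to an arbitrary $w\in L^p$ of compact support converges to $0$ in $L^p$. Writing out $[X_j, \ast\rho_\sigma]w$ explicitly, the difference quotients of $c_{ji}$ appear, and Lipschitz continuity bounds them uniformly while the a.e. convergence of the difference quotients to $\partial_i c_{ji}$ lets one pass to the limit via dominated convergence; there is also a term where the derivative falls on $\rho_\sigma$ that must be reorganized using $\int\partial_i\rho_\sigma = 0$. This is the one place where the Lipschitz hypothesis on the coefficients (rather than mere boundedness) is genuinely used, and it is what makes the anisotropic Meyers--Serrin theorem work exactly as in the Euclidean case. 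Everything else — the partition of unity, the local finiteness of the sum, the telescoping of the norm estimate — is routine.
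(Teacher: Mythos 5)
Your argument is correct: the paper itself states this theorem without proof, citing \cite{FSSC1} and \cite{GN}, and your localization--mollification scheme with the Friedrichs commutator lemma for vector fields with Lipschitz coefficients is precisely the Meyers--Serrin adaptation used in those references. The only point to phrase carefully is the commutator step, which is cleanest via the uniform $L^p$-bound on the commutator operator (from the Lipschitz constant) together with convergence on the dense class of smooth compactly supported functions, rather than a direct dominated-convergence argument; with that, your proof is complete.
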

We conclude this section recalling that, when $\Omega$ is bounded and the family $X$ satisfies properties (H1), (H2) and (H3), then $W^{1,p}_{X,0}(\Omega)$ can be compactly embedded in $L^p(\Omega)$ for any $1\leq p<\infty$.
Moreover, we prove that, if in addition $\Omega$ is connected, then
\begin{equation*}
\|u\|_{W^{1,p}_{X,0}(\Omega)}:=\left(\int_{\Omega} |Xu|^p\, dx\right)^{\frac{1}{p}}
\end{equation*}
defines an equivalent norm on $W^{1,p}_{X,0}(\Omega)$ for any $1\leq p<\infty$. 
\medskip

Let us point out some classes  of relevant vector fields satisfying properties (H1), (H2) and (H3).
\begin{oss}\label{exvf}
{\bf(i)} ({\it H\"ormander vector fields}) If the vector fields are smooth and the rank of the Lie algebra generated by $X_1,\ldots, X_m$ equals $n$ at any point of $\Omega_0$ (the so-called H\"ormander condition), then (H1), (H2) and (H3) hold (see \cite{NSW} for (H1) and (H2) and \cite{FLW} for (H3)).
\medskip

\noindent {\bf(ii)} ({\it Grushin vector fields}) If the vector fields are as in \cite{F2}, \cite{F1} and \cite{FL}, then conditions (H1), (H2) and (H3) still hold (see \cite{F2, F1, FL} for (H1) and (H2) and \cite[Remark 2.8]{FSSC2} for (H3)).
\end{oss}
The following results are proved in \cite[Theorems 2.11 and 3.4]{FSSC2}.
\begin{teo}\label{Poincare}
Let $\Omega\subset\mathbb{R}^n$ be a bounded open set, let $1\leq p<\infty$ and let $X$ satisfy conditions (H1), (H2) and (H3). Then, for each metric ball $B=B_d(x,r)\subset\Omega$ and for every $u\in W^{1,p}_{X}(\Omega)$, there exist a constant $c$, depending on $B$ and $u$, and a constant $C$, not depending on $u$, such that
\[
\int_B|u(x)-c|^p\,dx\le\,C\,r^p\,\int_B|Xu|^p\,dx\,.
\]
\end{teo}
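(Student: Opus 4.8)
\emph{Proof proposal.} The natural route is the one developed by Franchi, Haj\l asz and Koskela: reduce to Lipschitz functions, plug the representation formula (H3) with the barycentric choice of constant, and control the resulting Riesz-type potential in $L^p$ through the doubling property (H2); a concluding chaining argument then removes the dilation of the ball on the right-hand side.

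First I would fix the ball $B=B_d(\overline x,r)$ with $cB\Subset\Omega_0$, where $c$ is the structural constant of (H3), and $r$ small enough for (H2) to apply on $cB$. By Theorem~\ref{Theorem 1.2.3} the space $\Co 1(\Om)\cap W^{1,p}_X(\Om)$ is dense in $W^{1,p}_X(\Om)$, and a mollification on a slightly larger metric ball lets one take the approximants in $\Lip(c\overline B)$. Since, for this fixed $B$, the maps $u\mapsto\big(\int_B|u-u_B|^p\,dx\big)^{1/p}$ and $u\mapsto\big(\int_{cB}|Xu|^p\,dx\big)^{1/p}$ are continuous for $W^{1,p}_X$-convergence, it suffices to prove
\[
\int_B|u-u_B|^p\,dx\le C\,r^p\int_{cB}|Xu|^p\,dx,\qquad u_B:=\frac1{|B|}\int_B u\,dy,
\]
for $u\in\Lip(c\overline B)$, and then pass to the limit.

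Applying (H3) with the constant equal to $u_B$ gives, for every $x\in\overline B$,
\[
|u(x)-u_B|\le C\int_{cB}|Xu(y)|\,\frac{d(x,y)}{|B_d(x,d(x,y))|}\,dy=:C\,(Tg)(x),\qquad g:=|Xu|\in L^p(cB),
\]
hence $\int_B|u-u_B|^p\,dx\le C^p\int_B(Tg)^p\,dx$, and the matter reduces to the bound $\|Tg\|_{L^p(B)}\le C\,r\,\|g\|_{L^p(cB)}$. I would prove this by a Schur test on the nonnegative kernel $K(x,y):=d(x,y)/|B_d(x,d(x,y))|$. For $x\in B$ and $y\in cB$ one has $d(x,y)\le R:=(c+2)r$; splitting $cB$ into the dyadic annuli $A_k:=\{y:2^{-k-1}R\le d(x,y)<2^{-k}R\}$ and using (H2), which yields $|B_d(x,d(x,y))|\gtrsim|B_d(x,2^{-k}R)|$ on $A_k$, one gets
\[
\int_{cB}K(x,y)\,dy\lesssim\sum_{k\ge0}\frac{2^{-k}R}{|B_d(x,2^{-k}R)|}\,|B_d(x,2^{-k}R)|\lesssim r
\]
uniformly in $x$, and symmetrically $\int_BK(x,y)\,dx\lesssim r$ uniformly in $y$ (here one also uses that $|B_d(x,d(x,y))|$ and $|B_d(y,d(x,y))|$ are comparable, again by (H2)). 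Schur's lemma then gives $\|T\|_{L^p(cB)\to L^p(B)}\lesssim r$ for every $1\le p<\infty$, with constant depending only on the doubling constant. Combined with the previous step this proves the inequality on the pair $(B,cB)$, and a limiting argument extends it to all $u\in W^{1,p}_X(\Om)$.

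Finally, to replace $cB$ by $B$ on the right-hand side, as in the statement, I would use a chaining (Boman) argument: since $d$ is a geodesic distance, $cB$ can be covered by a family of metric balls contained in $\Om$, organized into chains with overlap and radii controlled by (H2); applying the inequality just obtained along each chain and summing the telescoping terms via doubling absorbs the constant $c$. I expect the two genuinely delicate points to be the uniform-in-$p$ potential estimate — that is, the Riesz-potential bound in a doubling space — and this chaining step; the remaining reductions are routine.
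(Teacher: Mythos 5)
The paper itself gives no proof of Theorem \ref{Poincare}: it is imported from \cite[Theorems 2.11 and 3.4]{FSSC2}, and the argument behind that citation is exactly the route you outline, namely the representation formula (H3) with the barycentric constant, an $L^p$ bound for the resulting potential via the doubling property (H2), and a chaining step to remove the dilation. Your annulus decomposition and Schur test are correct, and since the constant $C$ in the statement is allowed to depend on $B$, the merely local nature of (H2) (radii $<r_K$, finitely many extra doubling steps otherwise) is harmless; the self-improvement by chaining is standard for balls in a doubling length metric, so invoking it is legitimate.

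One point does need repair. For a ball $B\subset\Omega$ the dilated ball $cB$ need not be contained in $\Omega$ (the hypotheses only guarantee $cB\subseteq\Omega_0$ is meaningful for (H3)), whereas $u\in W^{1,p}_X(\Omega)$ is defined only on $\Omega$; hence your intermediate inequality with $\int_{cB}|Xu|^p$ on the right, and the density/mollification reduction to $\Lip(c\overline B)$, only make sense when $c\overline B\subset\Omega$, and the phrase about ``covering $cB$ by metric balls contained in $\Omega$'' is literally impossible in the remaining cases. The fix, consistent with the rest of your sketch, is to apply the weak $(B',cB')$ inequality only to the balls $B'$ of a Whitney-type decomposition of $B$ itself, chosen so that $cB'\subset B$ (there the $\Co 1(\Om)$ approximants provided by Theorem \ref{Theorem 1.2.3} are automatically Lipschitz on $c\overline{B'}$, so no extra mollification is needed), and then chain inside $B$, taking $c=u_B$ in the statement at the end by passing to the limit in $W^{1,p}_X(\Omega)$. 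With that adjustment your proposal is a correct reconstruction of the cited proof.
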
 
\begin{teo}\label{immersion}
Let $\Omega\subset\mathbb{R}^n$ be a bounded open set, let $1\leq p<\infty$ and let $X$ satisfy conditions (H1), (H2) and (H3). Then, $W^{1,p}_{X,0}(\Omega)$ is compactly embedded in $L^p(\Omega)$.
\end{teo}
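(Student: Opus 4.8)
The plan is to deduce the compact embedding from the general principle that a subset of a complete metric space is relatively compact if and only if it is totally bounded; hence it suffices to show that every bounded subset $\mathcal F\subset W^{1,p}_{X,0}(\Omega)$ is totally bounded in $L^p(\Omega)$ (the continuity of the embedding being obvious). I would obtain this by combining two ingredients: the fact that extension by zero embeds $W^{1,p}_{X,0}(\Omega)$ into $W^{1,p}_X(\Omega_0)$, and the local Poincar\'e inequality of Theorem \ref{Poincare}, applied on a well-chosen finite family of metric balls covering $\overline\Omega$ whose existence is guaranteed by the doubling hypothesis (H2).

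First, for $u\in W^{1,p}_{X,0}(\Omega)$ with zero-extension $\tilde u$, I would approximate $u$ in $W^{1,p}_X(\Omega)$ by $\varphi_k\in\Co 1_c(\Omega)$ and extend the $\varphi_k$ by zero: since each $\varphi_k$ vanishes near $\partial\Omega$, the extension $\tilde\varphi_k$ lies in $\Co 1(\Omega_0)$ with $X\tilde\varphi_k=\widetilde{X\varphi_k}$, so $(\tilde\varphi_k)_k$ is Cauchy in $W^{1,p}_X(\Omega_0)$; its limit is $\tilde u$, whence $\tilde u\in W^{1,p}_X(\Omega_0)$, $X\tilde u=\widetilde{Xu}$, and $\|\tilde u\|_{L^p(\Omega_0)}=\|u\|_{L^p(\Omega)}$, $\|X\tilde u\|_{L^p(\Omega_0)}=\|Xu\|_{L^p(\Omega)}$. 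Next I would fix a bounded open set $U$ with $\overline\Omega\subset U\Subset\Omega_0$ and, for every sufficiently small $r>0$ (so small that $B_d(x,r)\subset U$ for all $x\in\overline\Omega$, which is possible by the continuity of $d$ from (H1) and the compactness of $\overline\Omega$), take a maximal $r$-separated set $\{x_1,\dots,x_{N_r}\}\subset\overline\Omega$. It is finite, the balls $B_i:=B_d(x_i,r)$ cover $\overline\Omega$, and a standard argument — comparing the pairwise disjoint balls $B_d(x_i,r/2)$ with a fixed ball $B_d(z,\tfrac32 r)$ and invoking (H2) on the compact set $\overline U$ — bounds their multiplicity by a constant $N$ depending on the doubling constant of $\overline U$ but not on $r$. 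Let $\{E_i\}$ be the associated measurable partition of $\Omega$ with $E_i\subset B_i\cap\Omega$.

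For $u\in\mathcal F$ put $P_ru:=\sum_{i=1}^{N_r}(\tilde u)_{B_i}\chi_{E_i}$, where $(\tilde u)_{B_i}=\frac1{|B_i|}\int_{B_i}\tilde u\,dy$. Applying Theorem \ref{Poincare} to $\tilde u$ on each $B_i\subset U\subset\Omega_0$ (with $U$ as underlying bounded open set, and with constant uniform in $i$ and $r$, as the proof provides), replacing the constant $c$ there by $(\tilde u)_{B_i}$ at the cost of a universal factor, and summing over $i$ using the bounded overlap, I get
\[
\|u-P_ru\|_{L^p(\Omega)}^p=\sum_{i=1}^{N_r}\int_{E_i}|\tilde u-(\tilde u)_{B_i}|^p\,dy\le\sum_{i=1}^{N_r}\int_{B_i}|\tilde u-(\tilde u)_{B_i}|^p\,dy\le C\,r^p\sum_{i=1}^{N_r}\int_{B_i}|X\tilde u|^p\,dy\le C\,N\,r^p\,\|Xu\|_{L^p(\Omega)}^p.
\]
Hence $\sup_{u\in\mathcal F}\|u-P_ru\|_{L^p(\Omega)}\to0$ as $r\to0$ whenever $\mathcal F$ is bounded in $W^{1,p}_{X,0}(\Omega)$. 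For each fixed $r$, $P_r$ maps $\mathcal F$ into the finite-dimensional space $\mathrm{Span}\{\chi_{E_1},\dots,\chi_{E_{N_r}}\}\subset L^p(\Omega)$, and $|(\tilde u)_{B_i}|\le|B_i|^{-1/p}\|u\|_{L^p(\Omega)}$ shows $P_r(\mathcal F)$ is bounded there, hence totally bounded. Given $\eps>0$, I would choose $r$ with $\sup_{\mathcal F}\|u-P_ru\|_{L^p(\Omega)}<\eps/2$ and a finite $\tfrac\eps2$-net of $P_r(\mathcal F)$; the corresponding $\eps$-balls cover $\mathcal F$, so $\mathcal F$ is totally bounded in $L^p(\Omega)$. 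Since $L^p(\Omega)$ is complete, this yields the compact embedding, for every $1\le p<\infty$.

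The step I expect to be the main obstacle is the covering: producing, uniformly in $r$, a finite cover of $\overline\Omega$ by metric $r$-balls whose multiplicity is controlled independently of $r$. This is precisely where the doubling condition (H2) is essential, and its use requires the preliminary localization to a compact neighborhood of $\overline\Omega$ inside $\Omega_0$ on which the doubling constant is uniform, together with hypothesis (H1) to ensure that the metric balls are genuine open sets of positive Lebesgue measure and that they shrink into $U$ as $r\to0$. By contrast, the zero-extension and the passage from $\int_{B_i}|\tilde u-c|^p$ to $\int_{B_i}|\tilde u-(\tilde u)_{B_i}|^p$ are routine.
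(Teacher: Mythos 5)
Your argument is essentially correct, but it cannot be compared with a proof in the paper, because the paper offers none: Theorem \ref{immersion} is quoted from the literature, the text stating only that it (together with Theorem \ref{Poincare}) ``is proved in \cite[Theorems 2.11 and 3.4]{FSSC2}''. What you have written is a self-contained Rellich-type argument of the standard metric-measure-space kind (zero extension, a bounded-overlap cover of $\overline\Omega$ by metric $r$-balls via doubling, the ball Poincar\'e inequality to estimate $u-P_ru$, and total boundedness of the piecewise-average projections), and this is very much in the spirit of the proof in \cite{FSSC2}; the paper itself uses the same zero-extension device in the proof of Proposition \ref{PoincareW1p0}, so that step is fully consistent with the framework here. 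What your route buys is transparency about exactly which hypotheses enter where: (H2) only through the multiplicity bound for the cover, (H3) only through Theorem \ref{Poincare}.

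Two points deserve sharpening, though neither is fatal. First, as you yourself flag, you need the constant $C$ in Theorem \ref{Poincare} to be uniform over all balls $B_d(x_i,r)$ with $x_i\in\overline\Omega$ and over small $r$; the statement in the paper only asserts independence of $u$, so this uniformity is genuinely imported from the proof in \cite{FSSC2} (where $C$ depends only on the structural constants in (H2)--(H3) on a compact neighborhood), and your write-up should say so explicitly rather than treat it as contained in the quoted statement. Second, the claim that $B_d(x,r)\subset U$ for all $x\in\overline\Omega$ once $r$ is small does not follow from the continuity of $d$ asserted in (H1): continuity gives that Euclidean-close points are $d$-close, which is the wrong direction. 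The correct justification uses the length (subunit-path) definition of the Carnot--Carath\'eodory distance together with the local boundedness of the coefficients $c_{ji}$ on compact subsets of $\Omega_0$: a path of $d$-length less than $r$ issuing from $\overline\Omega$ cannot reach $\Omega_0\setminus U$ without crossing $\partial U$, and crossing $\partial U$ forces a Euclidean, hence a $d$-, length bounded below independently of the starting point. With these two repairs the argument is complete for every $1\le p<\infty$.
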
 
An interesting consequence of Theorem \ref{immersion} is the following result.
\begin{prop}\label{compemb}
Under the assumptions of Theorem \ref{immersion}, $W^{1,p}_{X}(\Omega)$ can be compactly embedded in $L^p_{loc}(\Omega)$.
\end{prop}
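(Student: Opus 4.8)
\textbf{Proof proposal for Proposition \ref{compemb}.}

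The plan is to reduce the statement to Theorem \ref{immersion} by a localization argument via cut-off functions. Fix an open set $\Om'\Subset\Om$; it suffices to show that every sequence bounded in $W^{1,p}_X(\Om)$ admits a subsequence converging in $L^p(\Om')$. Choose an intermediate open set $\Om''$ with $\Om'\Subset\Om''\Subset\Om$, and pick a cut-off function $\vartheta\in\Co\infty_c(\Om)$ with $\vartheta\equiv 1$ on $\Om'$ and $\supp\vartheta\subset\Om''$. First I would note that, since the coefficients $c_{ji}$ are locally Lipschitz and hence locally bounded, one has the Leibniz-type identity $X_j(\vartheta u)=\vartheta\,X_j u+u\,X_j\vartheta$ in the distributional sense on $\Om$, where $X_j\vartheta=\scalare{(c_{j1},\dots,c_{jn})}{D\vartheta}\in L^\infty(\Om'')$; consequently, if $u\in W^{1,p}_X(\Om)$ then $\vartheta u\in W^{1,p}_X(\Om)$ with support in $\Om''$, and there is a constant $c=c(\vartheta,\Om'')$, independent of $u$, such that $\|\vartheta u\|_{W^{1,p}_X(\Om)}\le c\,\|u\|_{W^{1,p}_X(\Om'')}\le c\,\|u\|_{W^{1,p}_X(\Om)}$.

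Next I would observe that $\vartheta u$, having compact support in $\Om$, lies in $W^{1,p}_{X,0}(\Om)$: indeed, mollifying $\vartheta u$ (or approximating in the classical Sobolev sense, using the inclusion \eqref{inclclassSobsp}) produces functions in $\Co1_c(\Om)\cap W^{1,p}_X(\Om)$ converging to $\vartheta u$ in $W^{1,p}_X(\Om)$. Therefore the linear map $u\mapsto\vartheta u$ sends bounded sets of $W^{1,p}_X(\Om)$ into bounded sets of $W^{1,p}_{X,0}(\Om)$. Applying Theorem \ref{immersion} (the compact embedding $W^{1,p}_{X,0}(\Om)\hookrightarrow L^p(\Om)$, valid since $\Om$ is bounded and $X$ satisfies (H1), (H2), (H3)), a bounded sequence $(u_k)_k\subset W^{1,p}_X(\Om)$ yields a subsequence such that $(\vartheta u_{k_j})_j$ converges in $L^p(\Om)$; restricting to $\Om'$, where $\vartheta\equiv1$, gives convergence of $(u_{k_j})_j$ in $L^p(\Om')$.

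Finally, to pass from a single $\Om'$ to all of $L^p_{loc}(\Om)$, I would exhaust $\Om$ by a countable increasing family of open sets $\Om'_\ell\Subset\Om$ with $\bigcup_\ell\Om'_\ell=\Om$ and run a diagonal argument: extract nested subsequences converging in each $L^p(\Om'_\ell)$ and take the diagonal, obtaining a single subsequence of $(u_k)_k$ that converges in $L^p(\Om'_\ell)$ for every $\ell$, hence in $L^p_{loc}(\Om)$. This establishes that the (continuous, by \eqref{inclclassSobsp}-type reasoning, bounded-to-bounded) inclusion $W^{1,p}_X(\Om)\hookrightarrow L^p_{loc}(\Om)$ is compact. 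The only mildly delicate point is the first step — checking that multiplication by a smooth compactly supported cut-off maps $W^{1,p}_X(\Om)$ continuously into $W^{1,p}_{X,0}(\Om)$, i.e. verifying the distributional Leibniz rule for the $X_j$ and the density of smooth compactly supported functions; both are routine given the local Lipschitz regularity of the coefficients and Theorem \ref{Theorem 1.2.3}, but they are where the argument actually uses the structure of $X$.
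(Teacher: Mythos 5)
Your proposal is correct and follows essentially the same route as the paper: localize with a smooth cut-off so that $\vartheta u_k$ is bounded in $W^{1,p}_{X,0}(\Omega)$, apply the compact embedding of Theorem \ref{immersion}, and then diagonalize over an exhaustion of $\Omega$ by compactly contained open sets. If anything, you are slightly more explicit than the paper about the Leibniz rule and why the cut-off products lie in $W^{1,p}_{X,0}(\Omega)$ (just note that the approximation there should go through mollification/Friedrichs or Theorem \ref{Theorem 1.2.3} plus a second cut-off, not through classical $W^{1,p}$ approximation, since $\vartheta u$ need not be in $W^{1,p}(\Omega)$).
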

\begin{proof}
Since $\Omega$ is open, there exists a sequence of open subsets of $\Omega$, $\emptyset\neq \Omega_1\subseteq\overline{\Omega}_1\subseteq \Omega_2\subseteq\dots$, such that $\overline{\Omega}_i$ is compact for every $i\in \mathbb{N}$ and 
\[\bigcup_{i=1}^\infty\,\overline{\Omega}_i=\Omega\,.
\]
Let $(u_n)_{n}$ be a bounded sequence in $W^{1,p}_{X}(\Omega)$, let $\varphi\in C^{\infty}_c(\Omega)$, with $0\leq \varphi\leq 1$ in $\Omega$ and $\varphi\equiv 1$ on $\Omega_1$, and define
\[
v_n^{(1)}:=\varphi\, u_n|_{\Omega_1}\quad\text{for any }n\in\mathbb{N}\,.
\]
Then, $(v_n^{(1)})_n$ is bounded in $W^{1,p}_{X,0}(\Omega)$ and, by Theorem \ref{immersion}, there exist a subsequence $(v_{n_k}^{(1)})_{k}$ of $(v_n^{(1)})_{n}$ and $u^{(1)}\in L^p(\Omega)$ such that
\begin{align*}
    v_{n_k}^{(1)}\rightarrow u^{(1)}\quad\text{in } L^p(\Omega)\quad\text{and}\quad u_n^{(1)}\rightarrow u^{(1)}\quad\text{in } L^p(\Omega_1)\,,
\end{align*}
where $(u_n^{(1)})_{n}$ is the subsequence of $(u_n)_{n}$ such that $v_{n_k}^{(1)}=\varphi\, u_n^{(1)}|_{\Omega_1}$.

Repeating the procedure described above for $(u_n^{(1)})_{n}$, we find the existence of $u^{(2)}\in L^p(\Omega)$ and $(u_n^{(2)})_{n}$, subsequence of $(u_n^{(1)})_{n}$, such that 
\begin{align*}
    u_n^{(2)}\rightarrow u^{(2)}\quad\text{in } L^p(\Omega_2)\quad\text{and}\quad u^{(1)}=u^{(2)}\quad\text{a.e. in } \Omega_1\,.
\end{align*}
Let us iterate the procedure for any $i\in\mathbb{N}$. Define
\[
v_n:=u_n^{(n)}\quad\text{ for any }n\in\mathbb{N}
\]
the $n$-th element of the $n$-th subsequence of $(u_n)_{n}$ and
\[
\overline{u}(x):=u^{(i)}(x)\quad\text{if }x\in\Omega_i\,.
\]
Then, by construction, $\overline{u}$ is well-defined and $\overline{u}\in L^p(\Omega)$. To conclude the proof, let us show that
\begin{align*}
    v_n\rightarrow\overline{u}\quad\text{in }L^p(\tilde{\Omega})\quad\text{for any open set }\tilde{\Omega}\Subset\Omega.
\end{align*}
Fixed $\tilde{\Omega}\Subset\Omega$, there exists $i\in\mathbb{N}$ such that $\tilde{\Omega}\Subset\Omega_i$ and so
\begin{align*}
    \int_{\tilde{\Omega}}|v_n-\overline{u}|^p\,dx\leq\int_{\Omega_i}|v_n-\overline{u}|^p\,dx=\int_{\Omega_i}|u_n^{(n)}-u^{(i)}|^p\,dx\to 0
\end{align*}
as $n\to\infty$. Then, the conclusion trivially follows.
\end{proof}
Let us point out that we can get a global compact embedding in Proposition \ref{compemb}, by requiring further 
regularity on $\Omega$, in the sense of the following definition.
\begin{defi} 
Let $(M,d)$ be a metric space. A bounded set $\Omega\subset\,M$ is said to be a {\it uniform domain} if there exists $\eps>\,0$ such that for each $x,y\in\Om$ there exists a continuous rectifiable curve $\gamma:\,[0,1]\to\Om$, with
\[
\gamma(0)=\,x,\quad\gamma(1)=\,y\,,
\]
\[
{\rm length}(\gamma)\le\,\frac{1}{\eps}\,d(x,y)\,,
\]
and, for each $t\in [0,1]$,
\[
{\rm dist}(\gamma(t),\partial\Om)\ge\, \eps\,\min\{{\rm length}(\gamma|_{[0,t]}), {\rm length}(\gamma|_{[t,1]})\}\,.
\]
\end{defi}
\begin{oss} The characterization of uniform domains in metric spaces is  a difficult task. Few examples of such domains are known, also in the framework of the Carnot-Carath\'eodory distance. A comprehensive account of uniform domains, with respect to the Carnot-Carath\'eodory distance, can be found in \cite{Mon} (see also \cite{FPS} for an interesting example).
\end{oss}
If $\Omega$ is a (bounded) uniform domain in the metric space $(\Omega_0,d)$, then, by using an extension result for functions in $W^{1,p}_{X}(\Omega)$ (see \cite{GN2}), by applying Theorem \ref{immersion} and by a localization argument in a neighborhood of $\Omega$, we get the following result.
\begin{teo}\label{immersionnot0} Let $\Omega\subset\mathbb{R}^n$ be a bounded open set, let $1\leq p<\infty$ and let $X$ satisfy conditions (H1), (H2) and (H3). Moreover, assume that $\Omega$ is a uniform domain in the metric space $(\Omega_0,d)$. Then, $W^{1,p}_{X}(\Omega)$ can be compactly embedded in $L^p(\Omega)$.
\end{teo}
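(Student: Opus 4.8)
The plan is to derive Theorem \ref{immersionnot0} from the already-established compact embedding on the "zero" Sobolev space (Theorem \ref{immersion}), by extending functions on $\Omega$ to functions defined on a slightly larger open set, applying the compact embedding there, and localizing back to $\Omega$. Concretely, since $\Omega$ is a bounded uniform domain in $(\Omega_0,d)$, the extension result of \cite{GN2} furnishes a bounded linear operator $E:W^{1,p}_X(\Omega)\to W^{1,p}_X(\Omega_0)$ such that $Eu|_\Omega=u$, with $\|Eu\|_{W^{1,p}_X(\Omega_0)}\le c\,\|u\|_{W^{1,p}_X(\Omega)}$ for a constant $c$ independent of $u$. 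Fix an open set $\Omega'$ with $\overline\Omega\subset\Omega'$ and $\overline{\Omega'}$ compact in $\Omega_0$ (possible because $\Omega$ is bounded and $\Omega_0\supseteq\overline\Omega$ is open), and fix a cutoff $\vf\in C^\infty_c(\Omega_0)$ with $0\le\vf\le1$ and $\vf\equiv1$ on a neighborhood of $\overline\Omega$, $\supp\vf\subset\Omega'$.

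The key steps, in order: (1) Given a bounded sequence $(u_n)_n$ in $W^{1,p}_X(\Omega)$, form $w_n:=\vf\,Eu_n$. Each $w_n$ has support compactly contained in $\Omega_0$, and by the Leibniz rule for the $X$-gradient, $Xw_n=\vf\,X(Eu_n)+(X\vf)\,Eu_n$, so $\|w_n\|_{W^{1,p}_X(\Omega_0)}\le C\,(\|\vf\|_\infty+\|X\vf\|_\infty)\,\|Eu_n\|_{W^{1,p}_X(\Omega_0)}\le C'\,\|u_n\|_{W^{1,p}_X(\Omega)}$; hence $(w_n)_n$ is bounded in $W^{1,p}_X(\Omega_0)$, and in fact $w_n\in W^{1,p}_{X,0}(\Omega_0)$ since it has compact support in $\Omega_0$ (approximate by mollification, or cite Definition \ref{Definition 1.1.3} and Theorem \ref{Theorem 1.2.3}). (2) Apply Theorem \ref{immersion} (with $\Omega_0$ in the role of the bounded open set — note $\Omega_0$ can be taken bounded by intersecting with a large ball still containing $\overline\Omega$, since the statement of Theorem \ref{immersionnot0} only concerns $\Omega$; alternatively apply the compact embedding to the bounded open set $\Omega'$ after noting $w_n\in W^{1,p}_{X,0}(\Omega')$) to extract a subsequence $(w_{n_k})_k$ converging strongly in $L^p$. (3) Restrict to $\Omega$: since $\vf\equiv1$ on $\Omega$ and $Eu_{n_k}|_\Omega=u_{n_k}$, we have $w_{n_k}|_\Omega=u_{n_k}$, so $(u_{n_k})_k$ is a Cauchy sequence in $L^p(\Omega)$ and therefore converges strongly in $L^p(\Omega)$. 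This shows every bounded sequence in $W^{1,p}_X(\Omega)$ has an $L^p(\Omega)$-convergent subsequence, i.e. the embedding $W^{1,p}_X(\Omega)\hookrightarrow L^p(\Omega)$ is compact.

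I would also record the small bookkeeping point that the embedding $W^{1,p}_X(\Omega)\hookrightarrow L^p(\Omega)$ is continuous, which is immediate from the definition of the norm $\|\cdot\|_{W^{1,p}_X(\Omega)}$; combined with the sequential compactness just proved, this gives compactness of the embedding in the usual sense.

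The main obstacle is not any single estimate but the correct invocation of the extension theorem of \cite{GN2}: one must check that the hypothesis "uniform domain in $(\Omega_0,d)$" is exactly what that reference requires, and that the extension lands in $W^{1,p}_X$ of a set to which Theorem \ref{immersion} applies. A secondary subtlety is the Leibniz rule $X(\vf v)=\vf\,Xv+(X\vf)\,v$ for $v\in W^{1,p}_X$ with $\vf$ smooth and compactly supported — this is standard for Lipschitz-coefficient vector fields (it follows by the product rule for distributional derivatives, using that $\partial_i c_{ji}\in L^\infty_{\mathrm{loc}}$), and the containment $\vf v\in W^{1,p}_{X,0}$ of the ambient set when $\supp(\vf v)$ is compact follows by mollification together with Theorem \ref{Theorem 1.2.3}. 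Everything else is routine diagonal-free subsequence extraction as in the proof of Proposition \ref{compemb}.
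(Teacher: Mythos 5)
Your argument is correct and follows exactly the route the paper itself indicates (and does not detail further): the $W^{1,p}_X$-extension theorem of \cite{GN2} for uniform domains, a cutoff to land in $W^{1,p}_{X,0}$ of a slightly larger bounded open set compactly contained in $\Omega_0$, Theorem \ref{immersion} there, and restriction back to $\Omega$. The only points needing care — applicability of \cite{GN2} under the uniform-domain hypothesis, the Leibniz rule for $X$, and membership of compactly supported functions in $W^{1,p}_{X,0}$ — are exactly the ones you flag, and they are handled at the same level of rigor the paper uses elsewhere (e.g. in the proofs of Proposition \ref{compemb} and Proposition \ref{PoincareW1p0}).
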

\begin{oss} Actually, Theorem \ref{immersionnot0} still holds for an even more general class of metric regular sets, namely the so-called {\it PS-domains} (see \cite[Theorem 1.28]{GN}). In particular, the metric balls with respect to the Carnot-Carath\'eodory distance are PS domains (see \cite{FGW,GN}).
\end{oss}
As a consequence of Theorems \ref{Poincare} and \ref{immersion}, a global Poincar\'e inequality holds in $W^{1,p}_{X,0}(\Omega)$.
\begin{prop}\label{PoincareW1p0}
Let $\Omega\subset\mathbb{R}^n$ be a bounded open set, let $1\leq p<\infty$ and let $X$ satisfy conditions (H1), (H2) and (H3). Moreover, assume that $\Omega$ is connected. Then, there exists a positive constant $c_{p,\Omega}$, depending on $p$ and $\Omega$, such that 
\begin{equation*}
c_{p,\Omega}\,\int_\Om |u|^p\,dx\le\, 
\int_\Om |Xu|^p\,dx\quad\text{for any }u\in W^{1,p}_{X,0}(\Omega)\,.
\end{equation*}
\end{prop}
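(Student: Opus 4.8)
The plan is to derive the global Poincaré inequality in $W^{1,p}_{X,0}(\Omega)$ by combining the local Poincaré inequality (Theorem \ref{Poincare}) with the compact embedding (Theorem \ref{immersion}) through a standard contradiction argument. The point is that Theorem \ref{Poincare} already gives us control of $u$ on each metric ball by $Xu$, but with a constant $c$ that depends on the function and, worse, modulo an additive constant; for functions in $W^{1,p}_{X,0}(\Omega)$ the ``boundary zero'' condition should let us remove that additive constant globally. Rather than chase the constant explicitly, I would argue by contradiction using compactness.

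First I would suppose the inequality fails for every constant. Then there exists a sequence $(u_k)_k\subset W^{1,p}_{X,0}(\Omega)$ with $\int_\Omega |u_k|^p\,dx = 1$ for all $k$ and $\int_\Omega |Xu_k|^p\,dx \to 0$ as $k\to\infty$. In particular $(u_k)_k$ is bounded in $W^{1,p}_X(\Omega)$: its full norm $\|u_k\|_{W^{1,p}_X(\Omega)}^p = \int_\Omega|u_k|^p\,dx + \int_\Omega|Xu_k|^p\,dx$ stays bounded. Since $W^{1,p}_X(\Omega)$ is reflexive for $1<p<\infty$ (Proposition \ref{psaw}) — and for the case $p=1$ one works instead with the compact embedding directly — we may extract a subsequence, not relabeled, with $u_k \rightharpoonup u$ weakly in $W^{1,p}_X(\Omega)$. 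By Theorem \ref{immersion}, $W^{1,p}_{X,0}(\Omega)$ embeds compactly in $L^p(\Omega)$, so along a further subsequence $u_k \to u$ strongly in $L^p(\Omega)$; hence $\int_\Omega |u|^p\,dx = 1$, so $u\not\equiv 0$. Moreover $W^{1,p}_{X,0}(\Omega)$ is a closed (hence weakly closed, being convex) subspace of $W^{1,p}_X(\Omega)$, so the weak limit $u$ lies in $W^{1,p}_{X,0}(\Omega)$.

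Next I would identify $Xu$. By weak lower semicontinuity of $\Phi\mapsto \int_\Omega |\Phi|^p\,dx$ on $L^p(\Omega)^m$ — equivalently, from the lower semicontinuity statement in Proposition \ref{psaw} applied to $\|\cdot\|^p_{W^{1,p}_X(\Omega)}$ together with strong $L^p$-convergence of $u_k$ — we get $\int_\Omega |Xu|^p\,dx \le \liminf_k \int_\Omega |Xu_k|^p\,dx = 0$, so $Xu = 0$ a.e.\ in $\Omega$. Now the local Poincaré inequality (Theorem \ref{Poincare}) forces $u$ to be constant on every metric ball $B_d(x,r)\subset\Omega$: indeed $\int_B |u-c|^p\,dx \le C r^p \int_B |Xu|^p\,dx = 0$ for the appropriate constant $c$. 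Since $\Omega$ is connected and, under (H1), the metric topology induced by $d$ agrees with the Euclidean one so $\Omega$ is covered by overlapping metric balls, a chaining argument shows $u$ is (a.e.\ equal to) a constant on all of $\Omega$. But a nonzero constant cannot belong to $W^{1,p}_{X,0}(\Omega)$: the Poincaré inequality applied to a single metric ball together with the approximation by $C^1_c$ functions — or, more directly, the contradiction that a constant function $c\neq 0$ would have $\int_\Omega|Xu|^p = 0$ while being approximable in $W^{1,p}_X$ by compactly supported functions, which is incompatible with $\int_\Omega|u|^p = 1$ via the very inequality we are examining restricted to a ball — shows $u\equiv 0$, contradicting $\int_\Omega |u|^p\,dx = 1$.

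The main obstacle is the connectedness/chaining step: one must pass from ``$u$ is locally constant on metric balls'' to ``$u$ is globally constant,'' which requires that $\Omega$ be connected in the $d$-metric topology and that this topology coincide with the Euclidean one (guaranteed by (H1)), and then a covering argument on a connected open set. A secondary delicate point is showing that a nonzero constant does not lie in $W^{1,p}_{X,0}(\Omega)$; the cleanest route is to note that if it did, we could test with the definition of $W^{1,p}_{X,0}$ as a closure of $C^1_c(\Omega)\cap W^{1,p}_X(\Omega)$ and obtain, via the local Poincaré inequality on a fixed ball $B\Subset\Omega$ and the trace-type vanishing of the approximating functions near $\partial\Omega$, that the constant must vanish. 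With these two points handled, the reflexivity and compact embedding do all the remaining work and the constant $c_{p,\Omega}$ emerges as (the reciprocal of) the supremum of $\int_\Omega|u|^p\,dx$ over $u\in W^{1,p}_{X,0}(\Omega)$ with $\int_\Omega|Xu|^p\,dx\le 1$, which the argument shows is finite.
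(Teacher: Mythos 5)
Your overall strategy (normalize a contradicting sequence, use the compact embedding of Theorem \ref{immersion} to pass to a strong $L^p$ limit with unit norm and vanishing $X$-gradient, then use Theorem \ref{Poincare} plus connectedness to conclude the limit is a constant) is exactly the paper's strategy, and those steps are fine. The genuine gap is the very last step, which you yourself flag as delicate: ruling out that the limit is a \emph{nonzero} constant, i.e.\ showing that nonzero constants do not belong to $W^{1,p}_{X,0}(\Omega)$. Neither of the two routes you sketch actually closes this. The ``more direct'' one is explicitly circular: you invoke ``the very inequality we are examining restricted to a ball'' to kill the constant, but no Poincar\'e inequality without the free additive constant is available at that point --- Theorem \ref{Poincare} only bounds $\int_B|u-c|^p$ for some $c$ depending on $u$, so it is compatible with $u$ being any constant and cannot force $c=0$. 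The other route, via ``trace-type vanishing of the approximating functions near $\partial\Omega$,'' presupposes a trace or boundary-behaviour theory for $W^{1,p}_{X,0}(\Omega)$ that is not established in this generality (and in the classical Euclidean case the statement ``constants are not in $W^{1,p}_0$'' is itself usually proved \emph{via} Poincar\'e or via extension by zero, so one cannot simply quote it).

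The paper resolves precisely this point by not working inside $\Omega$ at all: it uses that $X$ and hypotheses (H1)--(H3) hold on a neighborhood $\Omega_0\supset\overline\Omega$, picks a bounded connected open set $\Omega_1$ with $\Omega\Subset\Omega_1\Subset\Omega_0$, and shows (by approximating $u\in W^{1,p}_{X,0}(\Omega)$ with $C^1_c(\Omega)$ functions) that the extension of $u$ by zero lies in $W^{1,p}_{X,0}(\Omega_1)$ with $X\bar u$ equal to the zero-extension of $Xu$. Running your compactness argument on $\Omega_1$ instead of $\Omega$, the limit function is a constant on the connected set $\Omega_1$ but vanishes a.e.\ on the nonempty open set $\Omega_1\setminus\overline\Omega$ (since every $w_h$ does), so the constant is $0$, contradicting $\int_{\Omega_1}|w|^p\,dx=1$. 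If you incorporate this extension-by-zero device, your proof becomes complete; as written, the exclusion of nonzero constants is missing. (Two minor remarks: since $\int_\Omega|Xu_k|^p\,dx\to 0$, the gradients converge \emph{strongly} to $0$, so $Xu=0$ follows directly from the distributional definition without reflexivity or weak lower semicontinuity, which also repairs your $p=1$ case; and your chaining step from ``constant on metric balls'' to ``constant on $\Omega$'' is fine, since the locally Lipschitz coefficients give $B_d(x,r)\subset B(x,Cr)$ for small $r$, so $u$ is locally constant and connectedness applies.)
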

\begin{proof}
Let $\Omega_1\subset\mathbb{R}^n$ be a bounded and connected open set such that $\Omega\Subset\Omega_1\Subset\Omega_0$.
Let us first show that, for any $u\in W^{1,p}_{X,0}(\Omega)$, its extension $\bar u$ to $\Omega_1$, defined as 
\[
\bar u\equiv 0\quad\text{in }\Omega_1\setminus\Omega\,,
\]
that is,
\begin{equation}\label{extu}
X\bar u=
\begin{cases}
Xu&\text{ in }\Omega\\
0&\text{ in }\Omega_1\setminus\Omega
\end{cases}
\quad\text{a.e. in }\Omega_1\,,
\end{equation}
satisfies $\bar u\in W^{1,p}_{X,0}(\Omega_1)$.

\noindent Let $(u_h)_h\subset\ci^1_c(\Omega)$ be such that $u_h\to u$ in $W^{1,p}_{X}(\Omega)$. Then, it is easy to see that $(\bar u_h)_h\subset\ci^1_c(\Omega_1)$ and $\bar u_h\to \bar u$ in $W^{1,p}_{X}(\Omega_1)$.

Assume now, by contradiction, the existence of $(v_h)_h\subset W^{1,p}_{X,0}(\Omega)$ such that
\begin{equation*}%
\int_\Om |v_h|^p\,dx>\, h
\int_\Om |Xv_h|^p\,dx\quad\text{for each }h\in\bbN\,.
\end{equation*}
By \eqref{extu}, it follows that
\begin{equation}\label{PoincareW1p01}
\int_{\Om_1} |\bar v_h|^p\,dx>\, h
\int_{\Om_1} |X\bar v_h|^p\,dx\quad\text{for each }h\in\bbN\,.
\end{equation}
Let
\[
w_h(x):=\,\frac{\bar v_h(x)}{\|\bar v_h\|_{L^p(\Om_1)}}\quad\text{if }x\in\Om_1\,.
\]
Then, by construction, $(w_h)_h\subset W^{1,p}_{X,0}(\Omega_1)$ and
\begin{equation}\label{PoincareW1p04}
\int_{\Om_1} |w_h|^p\,dx=1\quad\text{for each }h\ge\,1\,.
\end{equation}
Moreover, by \eqref{PoincareW1p01}, 
\begin{equation}\label{PoincareW1p05}
\int_{\Om_1} |Xw_h|^p\,dx<\,\frac{1}{h}\quad\text{for each }h\ge\,1\,.
\end{equation}
By \eqref{PoincareW1p04}, \eqref{PoincareW1p05} and, in virtue of Theorem \ref{immersion}, there exists $w\in W^{1,p}_{X,0}(\Omega_1)$ such that, up to subsequences,
\begin{equation}\label{PoincareW1p06}
w_h\to w\text{ in }L^p(\Omega_1)\text{ and a.e. in }\Omega_1\,,
\end{equation}
\begin{equation}\label{PoincareW1p061}
\int_{\Omega_1}|w|^p\,dx=\,1
\end{equation}
and
\begin{equation}\label{PoincareW1p07}
Xw=\,(0,\dots,0)\quad\text{a.e. in }\Omega_1\,.
\end{equation}
Then, by \eqref{PoincareW1p07} and, in virtue of Theorem \ref{Poincare}, $w$ is locally constant on $\Omega_1$ and, since $\Om_1$ is connected, there exists a constant $k\in\bbR$ such that 
\[
w(x)=\,k\quad\text{for a.e. }x\in\Omega_1\,.
\]
By \eqref{PoincareW1p061}, $k\neq0$ and this yields a contradiction since, by definition, $w_h\equiv 0$ in $\Omega_1\setminus\bar\Omega$ for any $h\in\mathbb{N}$ and, by \eqref{PoincareW1p06}, $w=k=0$ a.e. in $\Omega_1\setminus\bar\Omega$.
\end{proof}
\begin{cor} Let $\Omega$, $p$ and $X$ be as in Proposition \ref{PoincareW1p0}. Then, 
\begin{equation*}
\|u\|_{W^{1,p}_{X,0}(\Omega)}:=\left(\int_{\Omega} |Xu|^p\, dx\right)^{\frac{1}{p}}
\end{equation*}
is a norm in $W^{1,p}_{X,0}(\Omega)$ equivalent to $\norma{\cdot}_{W^{1,p}_X(\Omega)}$.
\end{cor}
We conclude this section recalling the following estimate that will be useful to prove the coercivity of functionals $\Xi^{\varphi}_h$, defined in \eqref{sumsum}. It can be proved as in \cite[Lemma 2.7]{DM}.
\begin{lem}\label{PoincareW1pvarphi} Let $c_{p,\Om}$ be the Poincar\'e constant in \eqref{PoincareW1p0for}, let $\varphi\in W^{1,p}_{X}(\Omega)$ and let $c< c_{p,\Om}$. Then, there exist a positive constant $k_1$, depending only on $c$ and $c_{p,\Om}$, and a nonnegative constant $k_2$, depending on $c$, $c_{p,\Om}$ and $\|\varphi\|_{W^{1,p}_{X}(\Omega)}$, such that
\begin{equation*}
\int_\Om |Xu|^p\,dx-c\,\int_\Om |u|^p\,dx\ge\,k_1\left( \int_\Om |Xu|^p\,dx+ \int_\Om |u|^p\,dx\right)-k_2
\end{equation*}
for every $u\in W^{1,p}_{X,\varphi}(\Omega)$.
\end{lem}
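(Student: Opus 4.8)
The plan is to decompose $u = \varphi + v$ with $v := u-\varphi \in W^{1,p}_{X,0}(\Omega)$, apply the Poincar\'e inequality \eqref{PoincareW1p0for} of Proposition \ref{PoincareW1p0} to $v$, and absorb the $\varphi$-contributions by a \emph{sharp} convexity estimate rather than a crude one; this follows the scheme of \cite[Lemma 2.7]{DM}, transported to the $X$-gradient using only the linearity of $X$. I would first reduce to the case $0 \le c < c_{p,\Omega}$: since $\int_\Omega |u|^p\,dx \ge 0$, replacing $c$ by $\max\{c,0\}$ (which is still $< c_{p,\Omega}$) only lowers the left-hand side, so it suffices to prove the inequality for this value; alternatively, when $c<0$ one obtains the claim at once with $k_1 = \min\{1,|c|\}$ and $k_2 = 0$.

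Fix $u \in W^{1,p}_{X,\varphi}(\Omega)$ and note that $Xv = Xu - X\varphi$ by linearity of $X$. I would record the elementary inequality, a consequence of the convexity of $t \mapsto t^p$: for every $\epsilon>0$ there is $C_\epsilon = C_\epsilon(p)>0$ with $|a+b|^p \le (1+\epsilon)|a|^p + C_\epsilon|b|^p$ for all $a,b \in \mathbb{R}^m$. Combining $\int_\Omega|v|^p\,dx \le c_{p,\Omega}^{-1}\int_\Omega|Xv|^p\,dx$ with this inequality applied first to $Xv = Xu - X\varphi$ and then to $u = v + \varphi$ yields
\[
\int_\Omega |u|^p\,dx \le \beta \int_\Omega |Xu|^p\,dx + C'\,, \qquad \beta := \frac{(1+\epsilon)^2}{c_{p,\Omega}}\,,
\]
where $C'$ depends only on $p$, $\epsilon$, $c_{p,\Omega}$ and $\|\varphi\|_{W^{1,p}_X(\Omega)}^p$.

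The decisive step is then to choose $\epsilon>0$ so small that $\lambda := c(1+\epsilon)^2/c_{p,\Omega} < 1$, which is possible precisely because $c < c_{p,\Omega}$. Multiplying the previous bound by $c$ gives $c\int_\Omega|u|^p\,dx \le \lambda\int_\Omega|Xu|^p\,dx + cC'$, hence $\int_\Omega|Xu|^p\,dx - c\int_\Omega|u|^p\,dx \ge (1-\lambda)\int_\Omega|Xu|^p\,dx - cC'$. I would then set $k_1 := (1-\lambda)/(1+\beta) \in (0,1)$ and $k_2 := (c+k_1)C' \ge 0$; inserting $\int_\Omega|u|^p\,dx \le \beta\int_\Omega|Xu|^p\,dx + C'$ into the target inequality and cancelling reduces it to $(1 - \lambda - k_1(1+\beta))\int_\Omega|Xu|^p\,dx \ge 0$, which holds with equality by the choice of $k_1$. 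Since $\epsilon$, and therefore $\lambda$, $\beta$ and $k_1$, depend only on $c$ and $c_{p,\Omega}$ (and the fixed exponent $p$), while $k_2$ in addition depends on $\|\varphi\|_{W^{1,p}_X(\Omega)}$, these are exactly the dependencies asserted.

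The only point requiring care --- and the reason one cannot simply use $|v+\varphi|^p \le 2^{p-1}(|v|^p+|\varphi|^p)$ --- is the sharpness of the constant $\beta$: the crude splitting (together with a second factor $2^{p-1}$ from estimating $|Xu-X\varphi|^p$) would produce $\beta$ of order $4^{p-1}/c_{p,\Omega}$ and thus force the much stronger restriction $c < c_{p,\Omega}/4^{p-1}$. The $(1+\epsilon)$-refinement is exactly what makes the argument go through under the optimal hypothesis $c < c_{p,\Omega}$, at the harmless cost of an $\epsilon$-dependent additive constant; everything else is routine bookkeeping.
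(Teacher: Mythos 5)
Your proof is correct, and it follows essentially the same route as the paper, which simply refers to \cite[Lemma 2.7]{DM}: decompose $u=\varphi+v$ with $v\in W^{1,p}_{X,0}(\Omega)$, apply the Poincar\'e inequality \eqref{PoincareW1p0for} to $v$, and absorb the $\varphi$-terms via the $(1+\epsilon)$-refined convexity inequality, which is exactly what makes the argument work under the optimal hypothesis $c<c_{p,\Omega}$. The bookkeeping with $\beta$, $\lambda$, $k_1=(1-\lambda)/(1+\beta)$ and $k_2=(c+k_1)C'$ checks out, and the stated dependencies of $k_1$ and $k_2$ are respected.
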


%%%%%%%%%%%%%%%%%%%%%%%%%%

\section{\texorpdfstring{$\Gamma$}{TEXT}-convergence results for integral functionals depending on vector fields}\label{sec3}

%%%%%%%%%%%%%%%%%%%%%%%%%%

In this section, we study $\Gamma$-convergence results for classes of integral functionals depending on vector fields, with respect to the weak topology of $W^{1,p}_{X,0}(\Omega)$ and $W^{1,p}_{X}(\Omega)$, namely Theorem \ref{GammaWeak}, and the strong topology of $L^p(\Omega)$, see Theorems \ref{mainthm}, \ref{convboundary} and \ref{pertfunct}.
%%%%%%%%%%%%%%%%%%%%

\subsection{\texorpdfstring{$\Gamma$}{TEXT}-convergence in the weak topology of \texorpdfstring{$W^{1,p}_{X,0}(\Omega)$}{} and \texorpdfstring{$W^{1,p}_X(\Omega)$}{}}

%%%%%%%%%%%%%%%%%%%%%%%%%%
First we show that, if $X$ satisfies conditions (H1), (H2) and (H3), then the pointwise convergence of the sequence $(f_h(\cdot,\eta))_h$ a.e. in $\Omega$ for any $\eta\in\mathbb{R}^m$ implies the $\Gamma$-convergence of the corresponding integral functionals in the weak topology of $W^{1,p}_{X,0}(\Om)$ and $W^{1,p}_{X}(\Om)$.
\begin{teo}\label{GammaWeak} 
Let $\Omega\subset\mathbb{R}^n$ be a bounded open set, $1<p<\infty$ and let $X$ satisfy (LIC) condition and conditions (H1), (H2) and (H3). Let $f_h,f\in I_{m,p}(\Om,c_0,c_1,a_0,a_1)$, with $a_i\in L^\infty(\Om)$ ($i=\,0,\,1$), and let $F_h,\,F:\,W^{1,p}_{X,0}(\Om)\to\bbR$ be the corresponding integral functionals, defined as
\begin{equation}\label{FhFweak}
    F_h(u):=\,\int_\Om f_h(x,Xu(x))\,dx\,,\quad
F(u):=\,\int_\Om f(x,Xu(x))\,dx
\end{equation}
for any $u\in W^{1,p}_{X,0}(\Om)$ and for any $h\in\mathbb{N}$. Assume that
\begin{equation*}
f_h(\cdot,\eta)\to f(\cdot,\eta)\quad\text{a.e. in }\Omega\text{ and for any }\eta\in\Rm.
\end{equation*}
Then, $(F_h)_h$ $\Gamma$-converges
to $F$ in the weak topology of $W^{1,p}_{X,0}(\Om)$.

Moreover, if $W^{1,p}_X(\Om)$ is compactly embedded in $L^p(\Om)$, then $(F_h+G)_h$ $\Gamma$-converges to $F+G$ in the weak topology of $W^{1,p}_X(\Om)$. In this case, functionals $F_h,\,F:\,W^{1,p}_X(\Om)\to\bbR$ are defined as
\begin{equation*}
    F_h(u):=\,\int_\Om f_h(x,Xu(x))\,dx\,,\quad
F(u):=\,\int_\Om f(x,Xu(x))\,dx
\end{equation*}
for any $u\in W^{1,p}_X(\Om)$ and for any $h\in\mathbb{N}$, while $G:W^{1,p}_X(\Om)\to\mathbb{R}$ is the functional in \eqref{Gdatum} such that $g$ satisfies \eqref{crescitag}, with $0<\,d_0\leq d_1$.
\end{teo}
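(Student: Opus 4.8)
The plan is to run the classical $\Gamma$-$\liminf$ / $\Gamma$-$\limsup$ argument along sequences — which is the relevant notion here, since $W^{1,p}_{X,0}(\Om)$ is reflexive and separable (Proposition \ref{psaw}) and the $F_h$ are equi-coercive by the lower bound in $(I_3)$ — exploiting the convexity of the integrands through Fenchel duality. The preliminary observation is this: fixing a countable dense set $D\subset\Rm$ and discarding a null set, for a.e.\ $x\in\Om$ the functions $f_h(x,\cdot)$ are convex, equibounded on each ball (by $(I_3)$), and converge on $D$; by the classical fact that pointwise convergence of convex functions on a dense set together with local equiboundedness forces local uniform convergence, $f_h(x,\cdot)\to f(x,\cdot)$ locally uniformly on $\Rm$ for a.e.\ $x\in\Om$. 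Passing to Fenchel conjugates in the second variable, the $f_h^*(x,\cdot)$ are finite convex with uniform $p'$-growth — from $(I_3)$, $c_1^*|\zeta|^{p'}-a_1(x)\le f_h^*(x,\zeta)\le c_0^*|\zeta|^{p'}+a_0(x)$ for suitable $c_0^*,c_1^*>0$ — so, arguing as before (their pointwise convergence being inherited from that of $f_h(x,\cdot)$, since the defining suprema are attained on $h$-uniformly bounded sets), $f_h^*(x,\cdot)\to f^*(x,\cdot)$ locally uniformly for a.e.\ $x\in\Om$; also $f^{**}(x,\cdot)=f(x,\cdot)$ since $f(x,\cdot)$ is finite and convex.

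For the $\Gamma$-$\limsup$ inequality the recovery sequence is trivial: given $u\in W^{1,p}_{X,0}(\Om)$ take the constant sequence $u_h\equiv u$, which converges to $u$ in every topology. By the local uniform convergence, $f_h(x,Xu(x))\to f(x,Xu(x))$ a.e.\ in $\Om$, and $|f_h(x,Xu(x))|\le\max\{c_0,c_1\}\,|Xu(x)|^p+a_0(x)+a_1(x)\in L^1(\Om)$, so $F_h(u)\to F(u)$ by dominated convergence. The same argument settles the statement on $W^{1,p}_X(\Om)$, along which $G(u)$ is constant.

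The substantial point is the $\Gamma$-$\liminf$ inequality. Let $u_h\rightharpoonup u$ weakly in $W^{1,p}_{X,0}(\Om)$; since $u\mapsto Xu$ is bounded linear into $L^p(\Om)^m$, also $Xu_h\rightharpoonup Xu$ weakly there. Fix $\zeta\in L^{p'}(\Om)^m$. By Fenchel--Young, $f_h(x,Xu_h(x))\ge\scal{\zeta(x)}{Xu_h(x)}-f_h^*(x,\zeta(x))$ a.e., so $F_h(u_h)\ge\int_\Om\scal{\zeta}{Xu_h}\,dx-\int_\Om f_h^*(x,\zeta(x))\,dx$. As $h\to\infty$ the first integral tends to $\int_\Om\scal{\zeta}{Xu}\,dx$ by weak convergence, and the second to $\int_\Om f^*(x,\zeta(x))\,dx$ by dominated convergence (pointwise convergence from the preliminary step; integrable dominating function $\max\{c_0^*,c_1^*\}\,|\zeta|^{p'}+a_0+a_1$, using $\zeta\in L^{p'}(\Om)^m$, $a_i\in L^\infty(\Om)$ and $\Om$ bounded). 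Hence $\liminf_h F_h(u_h)\ge\int_\Om\big(\scal{\zeta}{Xu}-f^*(x,\zeta)\big)\,dx$ for every $\zeta\in L^{p'}(\Om)^m$. Taking the supremum over $\zeta$ and interchanging it with the integral yields $\liminf_h F_h(u_h)\ge\int_\Om f^{**}(x,Xu(x))\,dx=F(u)$: the inequality $\le$ in the interchange is Fenchel--Young integrated, while $\ge$ follows by choosing $\zeta$ to be a measurable selection of $x\mapsto\partial_\eta f(x,Xu(x))$ — it exists by a measurable selection theorem, satisfies $\scal{\zeta(x)}{Xu(x)}-f^*(x,\zeta(x))=f(x,Xu(x))$ a.e., and lies in $L^{p'}(\Om)^m$ because $(I_3)$ forces the subgradient bound $|\zeta(x)|\le C(1+|Xu(x)|)^{p-1}+a_0(x)+a_1(x)$. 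This proves the first assertion.

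Finally, for $F_h+G$ on $W^{1,p}_X(\Om)$ the $\liminf$ computation above is unchanged, so $\liminf_h F_h(u_h)\ge F(u)$ whenever $u_h\rightharpoonup u$ weakly in $W^{1,p}_X(\Om)$; and when $W^{1,p}_X(\Om)$ embeds compactly in $L^p(\Om)$, any such sequence converges strongly in $L^p(\Om)$, whence, $g$ being Carath\'eodory with $|g(x,s)|\le C|s|^p+(b_0+b_1)(x)$ and $b_0+b_1\in L^1(\Om)$, the associated superposition operator makes $G$ continuous on $L^p(\Om)$, so $G(u_h)\to G(u)$ and $\liminf_h(F_h+G)(u_h)\ge F(u)+G(u)$; together with the constant recovery sequence this gives $(F_h+G)_h\xrightarrow{\Gamma}F+G$ in the weak topology of $W^{1,p}_X(\Om)$. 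I expect the $\Gamma$-$\liminf$ step to be the main obstacle, resting entirely on the duality representation $F_h(v)=\sup_{\zeta}\int_\Om(\scal{\zeta}{Xv}-f_h^*(x,\zeta))\,dx$, on the dominated passage to the limit in $\int_\Om f_h^*(x,\zeta(x))\,dx$, and on the measurable-selection argument with the subgradient bound guaranteeing $\zeta\in L^{p'}(\Om)^m$; notice that conditions (H1)--(H3) enter only through the compact embedding (Theorem \ref{immersion}) used when $G$ is present, the first assertion needing only reflexivity of $W^{1,p}_{X,0}(\Om)$ and the bounds $(I_3)$.
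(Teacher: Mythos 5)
Your argument is correct in substance, but the heart of it — the $\Gamma$-$\liminf$ inequality along weakly convergent sequences — is proved by a genuinely different route from the paper's. You use convex duality: local uniform convergence of $f_h(x,\cdot)$ (dense set plus local equi-Lipschitzianity from $(I_3)$), convergence of the Fenchel conjugates $f_h^*(x,\cdot)\to f^*(x,\cdot)$ with uniform $p'$-growth, the Fenchel--Young minorization $F_h(u_h)\ge\int_\Om\bigl(\scal{\zeta}{Xu_h}-f_h^*(x,\zeta)\bigr)\,dx$, and then the interchange of supremum and integral via a measurable selection of $x\mapsto\partial_\eta f(x,Xu(x))$ with the subgradient bound placing $\zeta$ in $L^{p'}(\Om)^m$ (here $a_0,a_1\in L^\infty(\Om)$ and the boundedness of $\Om$ are used, exactly as in the statement). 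The paper instead proves a direct lower-semicontinuity result (Lemma \ref{lscgeneralized}) by constructing truncated convex minorants $g_r$ agreeing with $f$ on a ball and dominated by $c_0|\eta|^p$ outside a larger ball (Lemma \ref{const}), combined with Severini--Egoroff and Fatou; this avoids conjugates and measurable selections altogether and also yields statement (i) of that lemma ($f\in I_{m,p}$), which your route gets as a by-product of $f^{**}=f$ and the growth bounds. The $\limsup$ step (constant recovery sequence plus dominated convergence) and the treatment of $G$ through the compact embedding are the same in both proofs. Your duality argument is arguably more modular and reusable (it is the standard weak lower semicontinuity mechanism for convex integrands), while the paper's truncation argument is more self-contained and elementary.

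Two small caveats. First, the reduction of the $\Gamma$-limit in the weak topology to a sequential characterization is legitimate only in the presence of an equi-coercive minorant (\cite[Proposition 8.10]{DM}); on $W^{1,p}_{X,0}(\Om)$ the coercivity of $c_0\int_\Om|Xu|^p\,dx-\|a_0\|_{L^1(\Om)}$ with respect to the full norm rests on the Poincar\'e inequality, hence on (H1)--(H3) (the paper invokes Theorem \ref{immersion} at precisely this point), so your closing remark that the first assertion needs only reflexivity and $(I_3)$ overstates the generality. Second, for $(F_h+G)_h$ on $W^{1,p}_X(\Om)$ the analogous equi-coercive minorant uses $d_0>0$ in \eqref{crescitag} to control $\|u\|_{L^p(\Om)}$; you should say this explicitly rather than rely on the blanket equi-coercivity claim made at the outset.
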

Before proving Theorem \ref{GammaWeak} we need two technical lemmas.
\begin{lem}\label{const}
Let $f\in I_{m,p}(\Omega,c_0,c_1,0,a)$, with $a\in L^\infty(\Om)$, and let $r>0$. There exist $R=R(r)>r$ and a Borel measurable function $g_{r}:\Omega\times\mathbb{R}^m\to\mathbb{R}$ such that $g_r(x,\cdot)$ is convex for a.e. $x\in\Omega$ and 
\begin{align}\label{condG2}
&0\leq g_{r}(x,\eta)\leq\,f(x,\eta)\quad \mbox{for a.e. $x\in\Omega$ and every $\eta\in \mathbb{R}^m$}\,;\\
\label{condG}
&g_{r}(x,\eta)=\,f(x,\eta)\quad \mbox{for a.e. $x\in\Omega$ and every $\eta\in\overline{B_{r}(0)}$}\,;\\
\label{condG3}
&g_{r}(x,\eta)\leq c_0|\eta|^p\quad\mbox{for a.e. $x\in\Omega$ and every $\eta\in\mathbb{R}^m\setminus\overline{B_R(0)}$}\,.
\end{align}
\end{lem}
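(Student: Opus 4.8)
The plan is to construct $g_r$ by a truncation-at-infinity argument that preserves convexity and the value of $f$ on the closed ball $\overline{B_r(0)}$, while forcing the function to drop below $c_0|\eta|^p$ far from the origin. The natural candidate is to take, for each fixed $x$, the largest convex function that agrees with $f(x,\cdot)$ on $\overline{B_r(0)}$ and stays below $c_0|\eta|^p$ on the complement of some large ball; concretely I would set
\[
g_r(x,\eta):=\inf\Bigl\{ f(x,\zeta)+L_x(|\eta|-|\zeta|)_+ \ :\ \zeta\in\overline{B_r(0)}\Bigr\}\wedge\bigl(c_0|\eta|^p\bigr)\wedge f(x,\eta),
\]
or, more robustly, define $g_r(x,\cdot)$ as the convex envelope of the function $\min\{f(x,\cdot),\,\psi_R(\cdot)\}$ where $\psi_R$ is a convex function that equals $+\infty$-like large values on $\overline{B_r(0)}$ (so that the minimum there is $f$) and equals $c_0|\eta|^p$ for $|\eta|\ge R$. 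The point of introducing the intermediate radius $R=R(r)>r$ is precisely to have room between $r$ and $R$ to interpolate convexly from the graph of $f$ over $\overline{B_r(0)}$ down to the graph of $c_0|\cdot|^p$.

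The cleanest route, which I would actually carry out, is the following. By $(I_3)$ we have $f(x,\eta)\le c_1|\eta|^p+a(x)$ with $a\in L^\infty(\Omega)$, so $M:=c_1 r^p+\|a\|_{L^\infty(\Omega)}$ bounds $f(x,\eta)$ for a.e. $x$ and all $\eta\in\overline{B_r(0)}$, while $c_0|\eta|^p\le f(x,\eta)$ always. Choose $R=R(r)>r$ so large that $c_0 R^p>M$ (this defines $R$ independently of $x$, as required). For a.e. fixed $x$, let $h_x:\mathbb{R}^m\to\mathbb{R}$ be
\[
h_x(\eta):=\min\bigl\{ f(x,\eta),\ \varphi_x(\eta)\bigr\},
\qquad
\varphi_x(\eta):=\max\bigl\{ M,\ c_0|\eta|^p\bigr\}.
\]
On $\overline{B_r(0)}$ one has $f(x,\eta)\le M\le\varphi_x(\eta)$, so $h_x=f(x,\cdot)$ there; on $\mathbb{R}^m\setminus\overline{B_R(0)}$, $\varphi_x(\eta)=c_0|\eta|^p\le f(x,\eta)$, so $h_x(\eta)=c_0|\eta|^p$; and $0\le c_0|\eta|^p\le h_x\le f(x,\eta)$ everywhere. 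Now define $g_r(x,\cdot):=(h_x)^{**}$, the convex envelope (bipolar) of $h_x$. Since $c_0|\eta|^p$ is itself convex and lies below $h_x$, it lies below $g_r(x,\cdot)$, so $0\le g_r(x,\eta)$; since $f(x,\cdot)$ is convex and lies above $h_x$, it lies above the convex envelope, giving $g_r(x,\eta)\le f(x,\eta)$ and hence \eqref{condG2}. For \eqref{condG}, on $\overline{B_r(0)}$ we need $g_r(x,\eta)=f(x,\eta)$: the inequality $\le$ is \eqref{condG2}, and for $\ge$ note that any convex minorant of $h_x$ is in particular a convex minorant of $f(x,\cdot)$ on the ball — but one must rule out that the envelope dips below $f$ on the ball by using affine supporting hyperplanes, which is where the gap between $r$ and $R$ and the bound $c_0 R^p>M$ enter; I would argue that a supporting affine function of $f(x,\cdot)$ at a point of $\overline{B_r(0)}$ (which exists by convexity of $f(x,\cdot)$) already lies below $h_x$ globally, using that affine functions grow at most linearly while $c_0|\eta|^p$ grows superlinearly (here $p>1$ is essential). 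For \eqref{condG3}, on $\mathbb{R}^m\setminus\overline{B_R(0)}$ we have $h_x\le c_0|\eta|^p$ pointwise there, but we also need the envelope to stay below $c_0|\eta|^p$; this again follows because $c_0|\cdot|^p$ is convex and $\ge h_x$ on all of $\mathbb{R}^m$ (not just outside $B_R$), hence $\ge g_r(x,\cdot)$ everywhere, in particular outside $B_R(0)$.

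The remaining issue, and the one I expect to be the genuine technical obstacle, is Borel measurability of $(x,\eta)\mapsto g_r(x,\eta)$. The function $h_x(\eta)$ is jointly Borel (minimum of the Carathéodory function $f$ — Borel in $x$ by $(I_1)$, continuous in $\eta$ since convex and finite — with a jointly continuous function), and one must show its fiberwise convex envelope is still jointly Borel. I would handle this via the Carathéodory-style formula $g_r(x,\eta)=\inf\sum_{i=1}^{m+1}\lambda_i h_x(\eta_i)$ over convex combinations $\sum\lambda_i\eta_i=\eta$ (Carathéodory's theorem for the convex hull of the epigraph in $\mathbb{R}^m\times\mathbb{R}$), noting the infimum can be taken over a countable dense set of rational data by continuity of $h_x$ in $\eta$ for fixed $x$, which exhibits $g_r$ as a countable infimum of jointly Borel functions and hence Borel; and convexity of $g_r(x,\cdot)$ is automatic from the definition of the convex envelope. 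I would also double-check that $g_r(x,\cdot)$ is finite-valued (it is, being squeezed between $c_0|\eta|^p$ and $f(x,\eta)$), so no $\pm\infty$ issues arise and the later integrations make sense. With these pieces in place the three displayed properties \eqref{condG2}--\eqref{condG3} hold as shown above, completing the proof.
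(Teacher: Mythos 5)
Your construction $g_r(x,\cdot):=\bigl(\min\{f(x,\cdot),\max\{M,c_0|\cdot|^p\}\}\bigr)^{**}$ has a genuine gap at \eqref{condG}, and the patch you propose does not work: it is \emph{not} true that a supporting affine function of $f(x,\cdot)$ at a point of $\overline{B_r(0)}$ lies below $h_x$ globally. The truncation $\varphi_x$ caps at the constant $M=c_1r^p+\|a\|_{L^\infty(\Om)}$ immediately outside the ball, whereas the supporting plane at a boundary point has slope of order $c_1$ and typically exceeds $M$ just outside $B_r(0)$. Worse, the conclusion itself fails for your $g_r$: take $m=1$, $p=2$, $c_0=1$, $c_1=100$, $a\equiv0$, $r=1$ and $f(x,\eta)=100\,\eta^2$. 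Then $M=100$, $h_x(0)=0$, $h_x(10)=\min\{10^4,\max\{100,100\}\}=100$, and since $h_x^{**}$ is convex and $\le h_x$, the chord through $(0,0)$ and $(10,100)$ gives $h_x^{**}(1/2)\le 5<25=f(x,1/2)$, so the envelope dips strictly below $f$ inside $\overline{B_1(0)}$ and \eqref{condG} is violated. (A smaller slip: in your argument for \eqref{condG3} you assert $c_0|\cdot|^p\ge h_x$ on all of $\mathbb{R}^m$, which is false inside the ball — the inequality goes the other way there — but that step is harmless because $g_r\le h_x=c_0|\eta|^p$ outside $\overline{B_R(0)}$ already suffices.)

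The missing idea is that the right object is a convexification \emph{from below by tangents at points of the ball}, not the envelope of a truncation from above. The paper sets $G_{\eta_0}(x,\eta):=f(x,\eta_0)+f'_x(\eta_0,\eta-\eta_0)$, with $f'_x$ the directional derivative of the convex function $f(x,\cdot)$, and defines $g_r(x,\eta):=\sup_{\eta_0\in\mathbb{Q}^m\cap\overline{B_r(0)}}G_{\eta_0}(x,\eta)$. By the subgradient inequality this is $\le f$ everywhere, and it equals $f$ on $\overline{B_r(0)}$ by density of the rational centers and continuity; it is convex and Borel as a countable supremum; and, crucially, the subgradients of $f(x,\cdot)$ at points of $\overline{B_r(0)}$ are bounded by a constant $M$ depending only on $c_1$, $\|a\|_{L^\infty(\Om)}$ and $r$ (this is where $a\in L^\infty$ enters), so $g_r(x,\eta)\le c_1r^p+M(|\eta|+r)+\|a\|_{L^\infty(\Om)}$ grows at most linearly; since $p>1$, the bound $c_0|\eta|^p$ eventually dominates, which is what defines $R(r)$ and yields \eqref{condG3}, while nonnegativity in \eqref{condG2} follows from a short convexity argument using the minimum of $g_r$ over the ball. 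Your choice $\max\{M,c_0|\eta|^p\}$ discards exactly this uniform slope information at $\partial B_r(0)$, which is why the envelope collapses; to salvage the envelope route you would have to truncate by something like $M+L\,(|\eta|-r)_+$ with $L$ a uniform Lipschitz bound for $f(x,\cdot)$ on $\overline{B_{2r}(0)}$, and the joint Borel measurability of the fiberwise envelope would still require the countable-reduction argument you only sketch.
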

\begin{proof} Observe that, without loss of generality, we can assume that
\begin{equation}\label{I2everywhere}
f(x,\cdot):\,\Rm\to [0,\infty)\text{ is convex for each }x\in\Om\,.
\end{equation}
Indeed, by $(I_2)$ and since the $n$-dimensional Lebesgue measure $\mathcal{L}^n$ is Borel regular, there exists a negligible Borel set $N\subset\Om$ such that $f(x,\cdot):\,\Rm\to[0,\infty)$ is convex for each $x\in\Om\setminus N$.
By redefining $f(x,\eta):=\,0$ for each $x\in N$ and $\eta\in\Rm$, we get the desired conclusion.

By \eqref{I2everywhere} and \cite[Theorem 10.4]{Ro}, $f(x,\cdot):\,\Rm\to[0,\infty)$ is locally Lipschitz for any $x\in\Om$. In particular, fixed $x\in\Om$, $\eta_0\in\mathbb{R}^m$ and $r>0$, there exists a positive constant $L$, depending on $f,x,\eta_0$ and $r$, such that
\begin{equation}\label{fxlip}
|f(x,\eta_1)-f(x,\eta_2)|\le\,L\,|\eta_1-\eta_2|
\end{equation}
for any $\eta_1,\,\eta_2\in \overline{B_r(\eta_0)}$, and %, since $f\in I_{m,p}(\Omega,c_0,c_1,0,a)$,
\[
L:=\frac{1}{r}\sup_{\overline{B_{2r}(\eta_0)}}f(x,\cdot)\,.
\]
Fix $x\in\Omega$ and $\eta_0\in\mathbb{R}^m$. By \cite[Proposition 2.22]{Cla} and \eqref{fxlip}, for any $v\in\Rm$ there exists the directional derivative 
\[
f_x'(\eta_0,v):=\,\lim_{t\to 0}\frac{f(x,\eta_0+tv)-f(x,\eta_0)}{t}\in\bbR\,. 
\]
It is also clear that
\[
f'_x(\eta_0,v)=\lim_{h\to \infty}h\left(f(x,\eta_0+\frac{1}{h}v)-f(x,\eta_0)\right).
\]
By \eqref{fxlip} and \cite[Corollary 4.26]{Cla},
\begin{equation}\label{f'max}
f'_x(\eta_0,v)=\max_{\xi\in\partial f_x(\eta_0)} \langle \xi, v\rangle \quad\text{for any }v\in \mathbb{R}^m,
\end{equation}
where $\partial f_x(\eta_0)$ denotes the subdifferential of $f(x,\cdot)$ at $\eta_0$.

\noindent By \eqref{f'max}, the map $\mathbb{R}^m\ni v\mapsto f'_x(\eta,v)$ is positively homogeneous of degree one, convex, and so subadditive, continuous and finite. Moreover, since $f:\,\Omega\times\Rm\to[0,\infty)$ is Borel measurable, then the map $\Omega\times\Rm\ni (x,v)\mapsto f_x'(\eta,v)$ is Borel measurable for any $\eta\in\mathbb{R}^m$.

Fix $r>0$ and $\eta_0,\eta\in \mathbb{R}^m$. We define
\begin{equation*}
G_{\eta_0}(x,\eta):=f(x,\eta_0)+f_x'(\eta_0,\eta-\eta_0)
\end{equation*}
and
\begin{equation*}
g_r(x,\eta):=\sup_{\eta_0\in \mathbb{Q}^m\cap\overline{B_r(0)}}G_{\eta_0}(x,\eta)\,.
\end{equation*}
We first claim that $g_r(x,\eta)<\infty$ for a.e. $x\in\Omega$ for every $\eta\in\mathbb{R}^m$, $g_r:\,\Om\times\Rm\to\bbR$ is Borel measurable  and that $g_r(x,\cdot):\,\Rm\to \bbR$ is convex for a.e. $x\in\Omega$.

Let $x\in\Omega$ and define $\overline{\xi}$ the element of $\partial f_x(\eta_0)$ such that
\begin{equation}\label{subdiffeta-}
f'_x(\eta_0,\eta-\eta_0)=\langle \overline{\xi},\eta-\eta_0\rangle\,.
\end{equation}
Since $f\in I_{m,p}(\Omega,c_0,c_1,0,a)$, then
\begin{equation}\label{234}
|G_{\eta_0}(x,\eta)|\leq c_1|\eta_0|^p+|\overline{\xi}||\eta-\eta_0|+a(x)
\end{equation}
a.e. $x\in\Omega$, for any $\eta_0,\eta\in\mathbb{R}^m$ and
\begin{equation}\label{estimuniff}
c_0\,|\eta|^p\le\,f(x,\eta)\le\, c_1\,|\eta|^p+a(x)\le\,c_1 2^p\,\,r^p+\|a\|_{L^\infty(\Om)}
\end{equation}
for a.e. $x\in\Om$ and for any $\eta\in\overline{B_{2r}(0)}$.
Moreover, by \eqref{fxlip} and \eqref{estimuniff}, and, arguing as in  \cite[Proposition 4.14]{Cla}, there exists a positive constant $M$, depending only on $c_1$, $\|a\|_{L^\infty(\Om)}$ and $r$, such that 
\[|\overline{\xi}|\leq M\quad\text{for any }\eta_0\in\overline{B_r(0)}
\]
which, together with \eqref{234}, gives
\begin{equation}\label{stimagr}
g_r(x,\eta)\leq c_1r^p+M(|\eta|+r)+\|a\|_{L^\infty(\Om)}<+\infty
\end{equation}
for a.e. $x\in\Omega$ and $\forall \eta\in\mathbb{R}^m$.

Since $g_r$ is a pointwise supremum of the countable family of Borel measurable functions $\Omega\times\Rm\ni (x,\eta)\mapsto G_{\eta_0}(x,\eta)$, with $\eta_0\in \bbQ^m\cap\overline{B_r(0)}$, then it is Borel measurable.
As well, since $g_r(x,\cdot):\,\Rm\to\bbR$ is a pointwise supremum of the countable family of convex functions $\Rm\ni\eta\mapsto G_{\eta_0}(x,\cdot)$ a.e. $x\in\Om$, with $\eta_0\in\bbQ^m\cap\overline{B_r(0)}$, then, by \cite[Proposition 2.20]{Cla}, it is a convex function.

Let us now prove that $g_r$ satisfies \eqref{condG2} and \eqref{condG}. Let $x\in\Omega$ be such that $f(x,\cdot)$ is convex. Then, fixed $\eta_0\in\mathbb{R}^m$,
\begin{align}\label{dgb}
f(x,\eta)\geq f(x,\eta_0)+\langle\xi,\eta-\eta_0\rangle
\end{align}
for any $\eta\in\mathbb{R}^m$ and $\xi\in\partial f_x(\eta_0)$.
Let $\overline{\xi}\in\partial f_x(\eta_0)$ satisfy \eqref{subdiffeta-}. By \eqref{dgb},
\begin{equation*}
f(x,\eta)\geq G_{\eta_0}(x,\eta)
\end{equation*}
and, passing to the supremum, we get
\begin{equation}\label{mezzaG2}
    g_r(x,\eta)\leq\,f(x,\eta)\quad\text{ for a.e. }x\in\Omega\text{ and every }\eta\in\mathbb{R}^m.
\end{equation}
On the other hand, if $\eta\in\overline{B_r(0)}$ and $(\eta_h)_{h\in\mathbb{N}}\subset \mathbb{Q}^m\cap\overline{B_r(0)}$ are such that $\eta_h\to\eta$ as $n\to \infty$, then
\begin{equation}\label{limit}
g_r(x,\eta)\geq f(x,\eta_h)+f_x'(\eta_h,\eta-\eta_h)\ \quad\forall h\in\mathbb{N}.
\end{equation}
Moreover, since
\begin{equation*}
    |f_x'(\eta_n,\eta-\eta_h)|\leq M|\eta-\eta_h|\quad\forall h\in\mathbb{N}\,,
\end{equation*}
we conclude 
\begin{equation}\label{wsx}
\lim_{h\to\infty} f_x'(\eta_h,\eta-\eta_h)=0\,.
\end{equation}
Therefore, by \eqref{mezzaG2}, \eqref{limit}, \eqref{wsx} and by the continuity of $f(x,\cdot)$ in $\mathbb{R}^m$, we obtain \eqref{condG}. 

Fix now $x\in\Omega$ such that both $f(x,\cdot)$ and $g_r(x,\cdot)$ are convex in $\mathbb{R}^m$. By \eqref{condG} and Weierstrass theorem, there exists $\eta_1\in\overline{B_r(0)}$ such that
\begin{align}\label{weie}
    f(x,\eta_1)=\,g_r(x,\eta_1)=\min_{\eta\in\overline{B_r(0)}}g_r(x,\eta)
\end{align}
and, since $f\in I_{m,p}(\Omega,c_0,c_1,0,a)$, then
\begin{align}\label{grpospalla}
    g_r(x,\eta)\geq 0\quad\text{for any }\eta\in\overline{B_r(0)}\,.
\end{align}
Assume, by contradiction, the existence of $\eta_2\in\mathbb{R}^m\setminus\overline{B_r(0)}$ such that 
\begin{equation}\label{eta2}
    g_r(x,\eta_2)<0\,.
\end{equation}
Then, there exist $\eta_3\in\overline{B_r(0)}$ and $\overline{t}\in(0,1)$ such that $\eta_3=\overline{t}\eta_1+(1-\overline{t})\eta_2$ and, since $g_r(x,\cdot)$ is convex in $\mathbb{R}^m$, \eqref{weie} and \eqref{eta2} give
\begin{align*}
    g_r(x,\eta_1)\leq g_r(x,\eta_3)\leq\overline{t}g_r(x,\eta_1)+(1-\overline{t})g_r(x,\eta_2)<g_r(x,\eta_1)\,,
\end{align*}
which yields a contradiction. Then, by \eqref{grpospalla}, we get \eqref{condG2}.

Finally, since $p>1$ and $c_0,M>0$, we have
\begin{equation*}
\lim_{|\eta|\to\infty} \frac{c_0|\eta|^p}{ c_1 r^p+M(|\eta|+r)+\|a\|_{L^\infty(\Om)}}=+\infty
\end{equation*}
and, by \eqref{stimagr}, \eqref{condG3} also follows.
\end{proof}
\begin{lem}\label{lscgeneralized}
Let $f_h\in I_{m,p}(\Om,c_0,c_1,a_0,a_1)$, with $a_0,a_1\in L^\infty(\Om)$, and assume that
\begin{equation*}
f_h(\cdot,\eta)\to f(\cdot,\eta)\quad\text{a.e. in }\Omega,\,\text{for each }\eta\in\Rm.
\end{equation*}
Then,
\begin{itemize}
\item[(i)] $f\in I_{m,p}(\Om,c_0,c_1,a_0,a_1)$;
\item[(ii)] if $(\Phi_h)_h$ weakly converges to $\Phi$ in $L^p(\Om)^m$, then functionals $\mathcal{F}_h,\mathcal{F}:L^p(\Omega)^m\to\mathbb{R}$, defined in \eqref{calFh} and \eqref{calF}, satisfy
\[
\mathcal{F}(\Phi)\leq\liminf_{h\to\infty}\mathcal{F}_h(\Phi_h)\,,\text{ i.e.,}
\]
\[
\int_\Omega f(x,\Phi(x))\,dx\le\,\liminf_{h\to\infty}\int_\Omega f_h(x,\Phi_h(x))\,dx\,.
\]
\end{itemize}
\end{lem}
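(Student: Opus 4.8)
The plan is to prove the two parts of Lemma~\ref{lscgeneralized} in the order stated, with part $(i)$ an easy passage to the limit in pointwise inequalities and part $(ii)$ the real content, handled by a truncation argument based on Lemma~\ref{const} together with the classical weak lower semicontinuity theorem for nonnegative convex integrands. For $(i)$, since $f_h(\cdot,\eta)\to f(\cdot,\eta)$ a.e.\ for each fixed $\eta$, passing to the limit in the growth inequality $c_0|\eta|^p-a_0(x)\le f_h(x,\eta)\le c_1|\eta|^p+a_1(x)$ gives the same bound for $f$ at a.e.\ $x$ for each fixed $\eta$; a standard argument (using continuity of $f_h(x,\cdot)$, which follows from convexity and finiteness on $\mathbb{R}^m$, and that $\mathbb{Q}^m$ is dense) upgrades ``for each $\eta$, a.e.\ $x$'' to ``a.e.\ $x$, for all $\eta$''. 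Similarly convexity of $f(x,\cdot)$ is inherited as the pointwise limit of convex functions, and Borel measurability of $f$ in $(x,\eta)$ follows since $f$ is the pointwise limit (along $h$) of Carath\'eodory functions, hence a pointwise limit of a countable family of Borel functions on $\Omega\times\mathbb{Q}^m$ extended by continuity. Thus $f\in I_{m,p}(\Omega,c_0,c_1,a_0,a_1)$.

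For $(ii)$, first reduce to the case $a_0\equiv 0$ by replacing $f_h(x,\eta)$ with $f_h(x,\eta)+a_0(x)\ge 0$ and $f$ with $f+a_0$; since $a_0\in L^1(\Omega)$ this only shifts both sides of the desired inequality by the same finite constant $\int_\Omega a_0\,dx$. So assume $f_h\in I_{m,p}(\Omega,c_0,c_1,0,a_1)$ with $a_1\in L^\infty(\Omega)$. Fix $r>0$ and apply Lemma~\ref{const} to each $f_h$ (with the common bound $a:=a_1$), obtaining a radius $R=R(r)>r$ \emph{independent of $h$} (inspect the proof of Lemma~\ref{const}: $R$ depends only on $c_0$, $c_1$, $\|a_1\|_{L^\infty}$ and $r$) and convex Borel functions $g_{h,r}:\Omega\times\mathbb{R}^m\to[0,\infty)$ with $g_{h,r}\le f_h$ everywhere, $g_{h,r}=f_h$ on $\overline{B_r(0)}$, and $g_{h,r}(x,\eta)\le c_0|\eta|^p$ for $|\eta|>R$. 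Do the same for $f$, getting $g_r$. From the pointwise convergence $f_h(\cdot,\eta)\to f(\cdot,\eta)$ and the explicit formula $g_{h,r}(x,\eta)=\sup_{\eta_0\in\mathbb{Q}^m\cap\overline{B_r(0)}}\bigl(f_h(x,\eta_0)+(f_h)_x'(\eta_0,\eta-\eta_0)\bigr)$, one checks $g_{h,r}(\cdot,\eta)\to g_r(\cdot,\eta)$ a.e.\ for each $\eta$ (the directional derivatives converge because, on a ball, the $f_h(x,\cdot)$ are equi-Lipschitz with a constant controlled by $c_1 r^p+\|a_1\|_{L^\infty}$, so convexity plus pointwise convergence forces convergence of one-sided derivatives). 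Now estimate
\begin{align*}
\liminf_{h\to\infty}\int_\Omega f_h(x,\Phi_h(x))\,dx
&\ge \liminf_{h\to\infty}\int_\Omega g_{h,r}(x,\Phi_h(x))\,dx\\
&= \liminf_{h\to\infty}\Bigl(\int_\Omega g_r(x,\Phi_h(x))\,dx+\int_\Omega\bigl(g_{h,r}(x,\Phi_h(x))-g_r(x,\Phi_h(x))\bigr)\,dx\Bigr).
\end{align*}
The first term on the right is $\ge\int_\Omega g_r(x,\Phi(x))\,dx$ by Ioffe's lower semicontinuity theorem for the nonnegative Carath\'eodory, convex-in-the-last-variable integrand $g_r$ and the weak $L^p$ convergence $\Phi_h\rightharpoonup\Phi$. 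The second (error) term tends to $0$: split $\Omega=\{|\Phi_h|\le R\}\cup\{|\Phi_h|>R\}$; on $\{|\Phi_h|>R\}$ we have $0\le g_{h,r},g_r\le c_0|\eta|^p$, so $|g_{h,r}-g_r|\le c_0|\Phi_h|^p$ there, and its integral over that set is small uniformly in $h$ by the boundedness of $(\Phi_h)_h$ in $L^p$ together with $R\to\infty$ as $r\to\infty$; on $\{|\Phi_h|\le R\}$ the integrands $g_{h,r}(x,\cdot)$ are bounded by $c_1 r^p+M(R+r)+\|a_1\|_{L^\infty}$ (from \eqref{stimagr}) and converge pointwise to $g_r(x,\cdot)$ uniformly on $\overline{B_R(0)}$ (equi-Lipschitz + pointwise $\Rightarrow$ uniform on compacts), so the error on this set is $o(1)$ as $h\to\infty$ for fixed $r$. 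Hence for each fixed $r$,
\[
\liminf_{h\to\infty}\int_\Omega f_h(x,\Phi_h(x))\,dx\ge \int_\Omega g_r(x,\Phi(x))\,dx-\omega(r),
\]
where $\omega(r)\to 0$ as $r\to\infty$ (the tail estimate from the region $\{|\Phi|>R(r)\}$, using $\Phi\in L^p$). Finally let $r\to\infty$: $g_r(x,\Phi(x))=f(x,\Phi(x))$ on $\{|\Phi(x)|\le r\}\uparrow\Omega$, and $0\le g_r\le f$ with $f(\cdot,\Phi(\cdot))\in L^1(\Omega)$, so monotone/dominated convergence gives $\int_\Omega g_r(x,\Phi(x))\,dx\to\int_\Omega f(x,\Phi(x))\,dx$. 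This yields $\int_\Omega f(x,\Phi(x))\,dx\le\liminf_h\int_\Omega f_h(x,\Phi_h(x))\,dx$, which is $(ii)$.

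The main obstacle is making the error term vanish \emph{uniformly enough} in $h$ to survive the double limit; the key points are (a) that the truncation radius $R(r)$ from Lemma~\ref{const} can be chosen independent of $h$, which is why one needs the common bound $a_i\in L^\infty(\Omega)$ hypothesis, and (b) the equi-Lipschitz estimate on $\{f_h(x,\cdot)\}$ on bounded sets, which both gives uniform-on-compacts convergence $g_{h,r}\to g_r$ and controls the directional derivatives. Once those are in place, the remaining ingredients — Ioffe's weak lower semicontinuity theorem for nonnegative convex integrands and the boundedness in $L^p$ of a weakly convergent sequence — are standard.
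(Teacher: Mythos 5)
Your part (i) and the broad ``truncate and use weak lower semicontinuity of a convex integrand'' strategy are in the spirit of the paper, but two steps of your part (ii) are false as stated, and both are forced on you precisely because you truncate every $f_h$ instead of only the limit $f$. First, the claim that $g_{h,r}(\cdot,\eta)\to g_r(\cdot,\eta)$ (hence uniformly on $\overline{B_R(0)}$) fails outside $\overline{B_r(0)}$: pointwise convergence of convex functions does \emph{not} imply convergence of one-sided directional derivatives, so the supporting-hyperplane envelopes built from the $f_h$ need not converge to the envelope built from $f$. Concretely, with $m=1$, $p=2$, $r=1$, take $f(t)=\max\bigl(c_0t^2,\,c_0+K(t-1)\bigr)$ and $f_h(t)=\max\bigl(c_0t^2,\,c_0(1+1/h)^2+K(t-1-1/h)\bigr)$ with $K>2c_0$; these are $x$-independent, lie in $I_{1,2}(\Omega,c_0,c_1,0,a_1)$ for suitable constants $c_1,a_1$ uniform in $h$, and $f_h\to f$ uniformly. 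The kink of $f_h$ sits just outside $[-1,1]$, so $g_{h,1}(\eta)=c_0+2c_0(\eta-1)$ for $\eta>1$, while $g_1(\eta)\ge c_0+K(\eta-1)$; hence $g_{h,1}-g_1$ is negative and bounded away from $0$ on a whole annulus, and your error term on $\{|\Phi_h|\le R\}$ need not be $o(1)$ as $h\to\infty$ for fixed $r$. Second, on $\{|\Phi_h|>R\}$ you bound the error by $c_0|\Phi_h|^p$ and declare its integral small ``uniformly in $h$'' from boundedness of $(\Phi_h)_h$ in $L^p$; that needs equi-integrability of $(|\Phi_h|^p)_h$, which weak convergence does not provide (a concentrating sequence with $\|\Phi_h\|_{L^p}=1$ and $\Phi_h\rightharpoonup 0$ has $\int_{\{|\Phi_h|>R\}}|\Phi_h|^p\to 1$ for every fixed $R$). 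A smaller omission: you treat the a.e.-in-$x$ convergence of $f_h(\cdot,\eta)$ as if it were uniform in $x$; some Egoroff-type excision, plus control of the excised set, is required.

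The paper's proof avoids both obstacles by applying Lemma \ref{const} \emph{only} to the limit integrand $\varphi=f+a_0$ and never to the $f_h$: outside $\overline{B_R(0)}$ one has directly $f_h(x,\eta)+a_0(x)\ge c_0|\eta|^p\ge g_r(x,\eta)$, so no comparison of $h$-dependent envelopes and no equi-integrability are needed; on $\overline{B_R(0)}$ a finite $\varrho$-net, the equi-Lipschitz bound and Severini--Egoroff give $f_h+a_0\ge g_r-2L_R\varrho-z_h$ for $x$ outside a set $A^\delta$ of small measure; weak lower semicontinuity is then used for the single fixed convex integrand $g_r$ along $(\Phi_h)_h$, and the exceptional sets $A^\delta$ and $\{|\Phi|>r\}$ (Chebyshev applied to the limit $\Phi$, not to $\Phi_h$) are absorbed via the absolute continuity of $\int_\Omega f(\cdot,\Phi)\,dx$. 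If you replace your comparison of $g_{h,r}$ with $g_r$ by the direct pointwise comparison of $f_h$ with $g_r$ along these lines, your outline can be repaired; as written, the argument does not go through.
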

\begin{proof}(i) It is immediate.

\noindent (ii) Let $(\Phi_h)_h\subset L^p(\Om)^m$ be weakly convergent to $\Phi$ in $L^p(\Om)^m$. Then, there exists a positive constant $M$ such that
\begin{equation*}
    \int_\Omega |\Phi_h|^p\, dx\leq M\quad\text{for any }h\in\mathbb{N}\,.
\end{equation*}
Moreover, since $f\in I_{m,p}(\Om,c_0,c_1,a_0,a_1)$, then $f(\cdot,\Phi(\cdot))\in L^1(\Om)$. Therefore, by the absolute continuity of the Lebesgue's integral, for any $\varepsilon>0$, there exists $\delta=\delta(\varepsilon)>0$ such that
\begin{align}\label{ACl}
\int_A |f(x,\Phi(x))|\, dx<\varepsilon
\end{align}
for any measurable subset $A$ of $\Omega$ such that $|A|<\delta$.

Let us fix $R>0$ and let us consider $\overline{B_R(0)}\subset\mathbb{R}^m$. Then, for any $\varrho>0$, there exist $\eta_1,..,\eta_k\in B_R(0)$ such that
\begin{equation}\label{cupl}
    \overline{B_R(0)}\subset\cup_{i=1}^k B_\varrho(\eta_i)\,.
\end{equation}
Since $f_h,f\in I_{m,p}(\Om,c_0,c_1,a_0,a_1)$, then, by \cite[Theorem 10.4]{Ro}, there exists a positive constant $L_R$ such that
\begin{equation}\label{lipballl}
    \begin{split}
        &|f_h(x,\eta)-f_h(x,\eta_i)|\leq L_R|\eta-\eta_i|\leq L_R\varrho\,,\\
        &|f(x,\eta)-f(x,\eta_i)|\leq L_R|\eta-\eta_i|
        \leq L_R\varrho
    \end{split}
\end{equation}
for any $h\in\mathbb{N}$, $i=1,\dots,k$ and $\eta\in B_\varrho(\eta_i)\cap\overline{B_R(0)}$.

\noindent If $x\in\Om$ and $\eta\in\overline{B_R(0)}$ then, by \eqref{cupl}, there exists $i\in\{1,..,k\}$ such that $\eta\in B_\varrho(\eta_i)$ and, by \eqref{lipballl},
\begin{equation}\label{stimal}
\begin{split}
    |f_h(x,\eta)-f(x,\eta)|\leq 2L_R\varrho+|f_h(x,\eta_i)-f(x,\eta_i)|\,.
\end{split}
\end{equation}
Since $f_h(x,\eta_i)\to f(x,\eta_i)$ for a.e. $x\in\Omega$ and for any $i\in\{1,..,k\}$, then, by Severini-Egoroff theorem, there exist $A_1,..,A_k$, measurable subsets of $\Omega$, such that $|A_i|<\frac{\delta}{2k}$ and such that
\begin{align*}
    \lim_{h\to\infty}\left[\sup_{x\in\Omega\setminus A_i}|f_h(x,\eta_i)-f(x,\eta_i)|\right]=0\,.
\end{align*}
Let $A^\delta:=\cup_{i=1}^k A_i$. Thus, $|A^\delta|<\frac{\delta}{2}$ and
\begin{equation}\label{zh}
    \lim_{h\to\infty}z_h:=\lim_{h\to\infty}\left[\max_{i\in\{1,..,k\}}\sup_{x\in\Omega\setminus A^\delta}|f_h(x,\eta_i)-f(x,\eta_i)|\right]=0\,.
\end{equation}
Therefore, for any $x\in\Omega\setminus A^\delta$ and for any $\eta\in\overline{B_R(0)}$, by \eqref{stimal},
\begin{equation}\label{sevegl}
    |f_h(x,\eta)-f(x,\eta)|\leq 2L_R\varrho+z_h\,.
\end{equation}

Fix $r>0$ and define $\varphi_h:=f_h+a_0$ for any $h\in\mathbb{N}$ and $\varphi:=f+a_0$. Then, trivially, $\varphi_h,\varphi\in I_{m,p}(\Omega,c_0,c_1,0,a_0+a_1)$ and, by Lemma \ref{const}, there exist $R(r)>r$ and $g_r:\Omega\times\mathbb{R}^m\to[0,\infty)$ such that
\begin{align}
\label{condG2'}
&g_{r}(x,\eta)\leq\,\varphi(x,\eta)\quad \mbox{for a.e.  $x\in\Omega$ and every $\eta\in \mathbb{R}^m$}\\
\label{condG'}
&g_{r}(x,\eta)=\,\varphi(x,\eta)\quad \mbox{for a.e. $x\in\Omega$ and every $\eta\in\overline{B_{r}(0)}$}\\
\label{condG3'}
&g_{r}(x,\eta)\leq c_0|\eta|^p\quad \mbox{for a.e. $x\in\Omega$ and every $\eta\in\mathbb{R}^m\setminus\overline{B_R(0)}$}\,.
\end{align}
Notice that, if $x\in\Omega$ and $\eta\in\mathbb{R}^m\setminus \overline{B_R(0)}$, then, by \eqref{condG3'}
\begin{equation}\label{gr1l}
    \varphi_h(x,\eta)\geq c_0|\eta|^p\geq g_r(x,\eta)
\end{equation}
while, if $x\in\Omega\setminus A^\delta$ and $\eta\in\overline{B_R(0)}$, then, by \eqref{sevegl} and \eqref{condG2'}
\begin{equation}\label{gr2l}
    \varphi_h(x,\eta)\geq \varphi(x,\eta)-2L_R\varrho-z_h\geq g_r(x,\eta)-2L_R\varrho-z_h.
\end{equation}
Moreover, since $(\Phi_h)_h$ weakly converges to $\Phi$ in $L^p(\Omega)^m$, then
\begin{equation}\label{intgr}
    \liminf_{h\to\infty}\int_{\Omega\setminus A^\delta}g_r(x,\Phi_h)\,dx\geq\int_{\Omega\setminus A^\delta}g_r(x,\Phi)\,dx\,.
\end{equation}
Therefore, by \eqref{zh}, \eqref{condG'}, \eqref{gr1l}, \eqref{gr2l} and \eqref{intgr}, and by Fatou's lemma
\begin{align*}
    \liminf_{h\to\infty}\int_\Omega\varphi_h(x,\Phi_h)\,dx&\geq\liminf_{h\to\infty}\int_{\Omega\setminus A^\delta}\varphi_h(x,\Phi_h)\,dx\\
    &\geq\liminf_{h\to\infty}\left[\int_{\Omega\setminus A^\delta}g_r(x,\Phi_h)\,dx-(2L_R\varrho+z_h)|\Omega|\right]\\
    &\geq\int_{\Omega\setminus A^\delta}g_r(x,\Phi)\,dx-2L_R\varrho|\Omega|\\
    &\geq\int_{\Omega\setminus( A^\delta\cup\{|\Phi|>r\})}g_r(x,\Phi)\,dx-2L_R\varrho|\Omega|\\
    &=\int_{\Omega\setminus( A^\delta\cup\{|\Phi|>r\})}\varphi(x,\Phi)\,dx-2L_R\varrho|\Omega|\,.
\end{align*}
Moreover, by Chebyshev's inequality,
\begin{align*}
    |\{|\Phi|>r\}|\leq\frac{1}{r^p}\int_\Omega|\Phi(x)|^p\,dx\,.
\end{align*}

Let us choose $r$ such that $|\{|\Phi|>r\}|<\frac{\delta}{2}$. Thus, by \eqref{ACl}
\begin{align*}
    \liminf_{h\to\infty}\int_\Omega\varphi_h(x,\Phi_h)\,dx\geq\int_{\Omega}\varphi(x,\Phi)\,dx-\varepsilon-2L_R\varrho|\Omega|\,,
\end{align*}
that is,
\begin{align*}
    \liminf_{h\to\infty}\int_\Omega f_h(x,\Phi_h)\,dx\geq\int_{\Omega} f(x,\Phi)\,dx-\varepsilon-2L_R\varrho|\Omega|
\end{align*}
and, as $\varepsilon$ and $\varrho$ go to zero, we get the thesis.
\end{proof}
\begin{proof}[Proof of Theorem \ref{GammaWeak}]
By \eqref{3.2}, there exists $\Psi_1:W^{1,p}_{X,0}(\Omega)\to\mathbb{R}$ such that $\Psi_1\leq F_h$ for any $h\in\mathbb{N}$ and 
\begin{align*}
    \lim_{\|u\|_{W^{1,p}_{X,0}(\Omega)}\to\infty}\Psi_1(u)
    =+\infty\,.
\end{align*}
Then, by Theorem \ref{immersion} and \cite[Proposition 8.10]{DM}, we can characterize the $\Gamma$-limit of $(F_h)_h$ in terms of sequences, that is, fixed $u\in W^{1,p}_{X,0}(\Omega)$, it suffices to show that:
\begin{itemize}
    \item [$(a)$] for any $(u_h)_h$ weakly convergent to $u$ in $W^{1,p}_{X,0}(\Omega)$, then
    \[
    F(u)\leq\liminf_{h\to\infty}F_h(u_h)\,;
    \]
    \item [$(b)$] there exists $(v_h)_h$ weakly convergent to $u$ in $W^{1,p}_{X,0}(\Omega)$ such that
    \[
    F(u)=\lim_{h\to\infty}F_h(v_h)\,.
    \]
\end{itemize}
Let $(u_h)_h$ be weakly convergent to $u$ in $W^{1,p}_{X,0}(\Omega)$. Then, $(Xu_h)_h$ weakly converges to $Xu$ in $L^p(\Omega)^m$ and $(a)$ follows, by Lemma \ref{lscgeneralized}.

\noindent Let $v_h:=u$ for any $h\in\mathbb{N}$. Since $(f_h(\cdot,Xu))_h$ converges to $f(\cdot,Xu)$ a.e. in $\Omega$ by hypothesis, then, by the dominated convergence theorem, the sequence $(F_h(u))_h$ converges pointwise to $F(u)$ and $(b)$ also follows.

Similarly, by \eqref{3.2} and \eqref{crescitag}, there exists $\Psi_2:W^{1,p}_X(\Omega)\to\mathbb{R}$ such that $\Psi_2\leq F_h+G$ in $W^{1,p}_X(\Omega)$ for any $h\in\mathbb{N}$ and
\begin{align*}
    \lim_{\|u\|_{W^{1,p}_X(\Omega)}\to\infty}\Psi_2(u)=+\infty\,.
\end{align*}
Then, since $W^{1,p}_X(\Om)$ is compactly embedded in $L^p(\Om)$ by hypothesis, we can characterize the $\Gamma$-limit of $(F_h+G)_h$ in terms of sequences, in virtue of \cite[Proposition 8.10]{DM} and, since $G$ is sequentially continuous in the weak topology of $W^{1,p}_X(\Om)$, then $(F_h+G)_h$ $\Gamma$-converges to $F+G$ in the weak topology of $W^{1,p}_X(\Om)$ by the first part of the proof, and the thesis follows.
\end{proof}
\medskip

An analogous result in the strong topology of $W^{1,p}_X(\Omega)$ still holds true and a proof can be found in \cite[Proposition 2.3.24]{Ma}.
\begin{teo}\label{convpointimplgamma}
Let $\Omega\subset\mathbb{R}^n$ be a bounded open set, $1<p<\infty$ and let $X$ satisfy (LIC) condition. Let $f_h,f\in I_{m,p}(\Om,c_0,c_1,a_0,a_1)$ and let $F_h,\,F:\,W^{1,p}_X(\Om)\to\bbR$ be the corresponding integral functionals, defined as
\begin{align*}
    F_h(u):=\,\int_\Om f_h(x,Xu(x))\,dx\,,\quad
F(u):=\,\int_\Om f(x,Xu(x))\,dx
\end{align*}
for any $u\in W^{1,p}_X(\Om)$ and for any $h\in\mathbb{N}$. Then, $(F_h)_h$ converges pointwise to $F$ in $W^{1,p}_X(\Om)$ if and only if $(F_h)_h$ $\Gamma$-converges to $F$ in the strong topology of $W^{1,p}_X(\Om)$.
\end{teo}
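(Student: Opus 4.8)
The plan is to prove both implications of the equivalence, the nontrivial direction being that pointwise convergence implies $\Gamma$-convergence in the strong topology of $W^{1,p}_X(\Omega)$, with the converse being essentially immediate from the definition of $\Gamma$-convergence.

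First I would dispose of the easy direction: if $(F_h)_h$ $\Gamma$-converges to $F$ in the strong topology of $W^{1,p}_X(\Omega)$, then for each fixed $u$ the constant sequence $v_h\equiv u$ is a recovery sequence (after possibly perturbing — but the $\Gamma$-$\limsup$ inequality applied to the constant sequence gives $\limsup_h F_h(u)\le F(u)$, and the $\Gamma$-$\liminf$ inequality applied to the same constant sequence gives $F(u)\le\liminf_h F_h(u)$), hence $F_h(u)\to F(u)$, i.e.\ pointwise convergence. So the real content is the forward implication.

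For the forward implication, assume $(F_h)_h$ converges pointwise to $F$ on $W^{1,p}_X(\Omega)$ and verify the two $\Gamma$-convergence conditions in the strong $W^{1,p}_X(\Omega)$-topology. The $\Gamma$-$\limsup$ (existence of a recovery sequence) is trivial: take $v_h\equiv u$, which converges strongly to $u$, and $F_h(v_h)=F_h(u)\to F(u)$ by hypothesis. The $\Gamma$-$\liminf$ inequality is the heart of the matter: given $u_h\to u$ strongly in $W^{1,p}_X(\Omega)$, show $F(u)\le\liminf_h F_h(u_h)$. Here I would extract a subsequence realizing the $\liminf$, then use the growth bound $(I_3)$ (i.e.\ \eqref{3.2}) which forces $(Xu_h)_h$ to be bounded in $L^p(\Omega)^m$; combined with strong $L^p$ convergence of $Xu_h$ to $Xu$ (which follows from strong $W^{1,p}_X$ convergence), pass to a further subsequence converging a.e.\ and dominated. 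The estimate
\[
F_h(u_h)-F(u_h)=\int_\Omega\bigl(f_h(x,Xu_h(x))-f(x,Xu_h(x))\bigr)\,dx
\]
is controlled by locally uniform convergence $f_h(\cdot,\eta)\to f(\cdot,\eta)$: wait — the hypothesis is only \emph{pointwise} convergence of $(F_h)_h$, not of the integrands $f_h$. So instead I would argue differently, splitting $F_h(u_h)=\bigl(F_h(u_h)-F_h(u)\bigr)+F_h(u)$, noting $F_h(u)\to F(u)$, and bounding $|F_h(u_h)-F_h(u)|$ uniformly in $h$ via the convexity of $f_h(x,\cdot)$ together with the $L^1$-equi-integrability coming from $(I_3)$ and the strong convergence $Xu_h\to Xu$ in $L^p$; convexity plus $p$-growth gives a local Lipschitz estimate (as in \cite[Theorem 10.4]{Ro}) $|f_h(x,\eta_1)-f_h(x,\eta_2)|\le L(x,R)|\eta_1-\eta_2|$ on balls, with $L(x,R)$ controlled by $c_1|R|^{p-1}+a_1(x)/R$, so that $|F_h(u_h)-F_h(u)|\le C\|Xu_h-Xu\|_{L^p}\to 0$ uniformly in $h$ after a truncation argument handling the region where $|Xu_h|$ or $|Xu|$ is large. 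Passing to the limit then yields $\lim_h F_h(u_h)=F(u)$ along the subsequence, in particular the $\Gamma$-$\liminf$ inequality.

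The main obstacle I anticipate is the uniform (in $h$) control of $|F_h(u_h)-F_h(u)|$: one must combine the convexity-driven local Lipschitz bound with a careful truncation/equi-integrability argument, since the Lipschitz constant of $f_h(x,\cdot)$ blows up at infinity and the $a_1$ term must be handled by absolute continuity of the Lebesgue integral — this is exactly the kind of technical estimate already developed in Lemma~\ref{lscgeneralized} and Lemma~\ref{const}, so I would lean on those ideas (or simply cite \cite[Proposition 2.3.24]{Ma} as the paper does, since the argument is standard in the Euclidean/$\Gamma$-convergence literature and transfers verbatim once $Xu_h\to Xu$ strongly in $L^p(\Omega)^m$).
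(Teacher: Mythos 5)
Your forward implication is the right route (the paper itself gives no proof of Theorem~\ref{convpointimplgamma}, deferring to \cite[Proposition 2.3.24]{Ma}, and the standard argument is exactly the one you sketch): the constant sequence settles the $\Gamma$-limsup, and the $\Gamma$-liminf follows from the splitting $F_h(u_h)=F_h(u)+\bigl(F_h(u_h)-F_h(u)\bigr)$ once the second term is controlled uniformly in $h$. But the truncation/equi-integrability machinery you anticipate is unnecessary: since all the $f_h$ belong to $I_{m,p}(\Om,c_0,c_1,a_0,a_1)$, Lemma~\ref{lem4.5} gives \eqref{ULC} with a constant $c_2=c_2(p,c_1)$ independent of $h$, and H\"older's inequality (using $(p-1)p'=p$) yields
\[
|F_h(v)-F_h(w)|\le c_2\,\norma{Xv-Xw}_{L^p(\Om)^m}\Bigl(\norma{Xv}_{L^p(\Om)^m}+\norma{Xw}_{L^p(\Om)^m}+\norma{a_0+a_1}_{L^1(\Om)}^{1/p}\Bigr)^{p-1}
\]
for every $v,w\in W^{1,p}_X(\Om)$ and every $h$. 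With $v=u_h$, $w=u$ this tends to $0$ because $Xu_h\to Xu$ in $L^p(\Om)^m$ and the norms stay bounded, so you even get continuous convergence $F_h(u_h)\to F(u)$, which contains the liminf inequality; no passage to a.e.\ convergent subsequences is needed.

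The genuine gap is in the direction you call immediate. The $\Gamma$-limsup inequality asserts the existence of \emph{some} recovery sequence; it says nothing about the constant sequence, so ``the $\Gamma$-limsup inequality applied to the constant sequence gives $\limsup_h F_h(u)\le F(u)$'' is not a valid step, and for general functionals this implication is false: the $\Gamma$-limit is always lower semicontinuous and may lie strictly below the pointwise limit. What the constant sequence gives you is only $F(u)\le\liminf_h F_h(u)$. To obtain the missing upper bound you must take an actual recovery sequence $v_h\to u$ strongly in $W^{1,p}_X(\Om)$ with $F_h(v_h)\to F(u)$ and transfer it to $u$ via the same uniform estimate displayed above, which gives $|F_h(u)-F_h(v_h)|\to 0$ and hence $\limsup_h F_h(u)\le F(u)$. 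In other words, \emph{both} implications rest on the $h$-uniform Lipschitz-type bound \eqref{ULC}; once you invoke it, the whole equivalence follows, but without it the converse is not a consequence of the definition of $\Gamma$-convergence alone.
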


%%%%%%%%%%%%%%%%%%%%

\subsection{\texorpdfstring{$\Gamma$}{TEXT}-convergence in the strong topology of \texorpdfstring{$L^p(\Omega)$}{}}

%%%%%%%%%%%%%%%%%%%%%%%%%%

The first result of this section in an extension of \cite[Theorem 4.11]{MPSC1} to our class of integrands and a proof can be found in \cite[Theorem 2.3.12]{Ma}.
\begin{teo}\label{mainthm} Let $\Om\subset\Rn$ be a bounded open set, let $\mA$ be the class of all open subsets of $\Omega$, $1<p<\infty$ and let $X$ satisfy (LIC) condition. Let $f_h\in I_{m,p}(\Om,c_0,c_1,a_0,a_1)$ and let $ F_h:\,L^p(\Om)\times\mA\to\bbR\cup\{\infty\}$ be the local functional defined as 
\begin{equation}\label{Fstarh}
F_h(u,A):=
\displaystyle{\begin{cases}
\int_{A}f_h(x,Xu(x))dx&\text{ if }A\in\mA,\,u\in W^{1,p}_X(A)\\
\infty&\text{ otherwise}
\end{cases}
\,.
}
\end{equation}
Then, there exist a local functional $F:\,L^p(\Om)\times\mA\to\bbR\cup\{\infty\}$ and $f\in I_{m,p}(\Om,c_0,c_1,a_0,a_1)$ such that, up to subsequences, 
\begin{equation}\label{GammalimitFeuc}
F(\cdot, A)=\,\Gamma(L^p(\Om))\text{-}\lim_{h\to\infty} F_h(\cdot,A)\text{ for each }A\in\mathcal A
\end{equation}
and $F$ admits the following representation 
\begin{equation}\label{GammalimitF}
F(u,A):=
\displaystyle{\begin{cases}
\int_{A}f(x,Xu(x))dx&\text{ if }A\in\mA,u\in W^{1,p}_X(A)\\
\infty&\text{ otherwise}
\end{cases}
\,.
}
\end{equation} 
\end{teo}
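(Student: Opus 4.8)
The plan is to follow the classical localization method of De Giorgi and Dal Maso for $\Gamma$-convergence of integral functionals, as carried out in \cite{MPSC1} and \cite{DM}, with the Euclidean gradient replaced throughout by the $X$-gradient. The first step is a compactness statement for the abstract $\Gamma$-limit: since $L^p(\Om)$ is separable, the space of local functionals on $L^p(\Om)\times\mA$ is sequentially compact with respect to $\Gamma$-convergence (see \cite[Theorem 8.5 and Proposition 16.9]{DM}), so up to a subsequence not relabeled there exists a local functional $F:L^p(\Om)\times\mA\to\bbR\cup\{\infty\}$ with \eqref{GammalimitFeuc}. One also records that, because of the lower bound in \eqref{3.2}, each $F_h(\cdot,A)$ is coercive in $L^p$, so that $\Gamma$-convergence can be tested sequentially.

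The core of the argument is the integral representation \eqref{GammalimitF}. First I would establish the standard structural properties of the set function $A\mapsto F(u,A)$: it is an increasing set function, inner regular, and superadditive; subadditivity (hence that $F(u,\cdot)$ is the restriction of a Borel measure) is obtained through the De Giorgi--Letta criterion, where the only nontrivial ingredient is the \emph{fundamental estimate}: given $u,v\in W^{1,p}_X(A)$ and nested open sets, one joins $u$ and $v$ with a cutoff function $\varphi\in C^1_c$ and controls the error term $\int |X\varphi|^p|u-v|^p$. Here the only point where the $X$-gradient intervenes is the Leibniz rule $X(\varphi w)=\varphi\,Xw + w\,X\varphi$, which holds because the $c_{ji}$ are Lipschitz, so $|X\varphi|\le C|\nabla\varphi|$ and the estimate goes through exactly as in the Euclidean case. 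Once $F(u,\cdot)$ is a measure absolutely continuous with respect to $\mathcal L^n$ (the growth bound from \eqref{3.2} passes to the $\Gamma$-limit, giving $c_0\int_A|Xu|^p - \int_A a_0 \le F(u,A)\le c_1\int_A|Xu|^p+\int_A a_1$, which yields both the absolute continuity and, crucially, that $F(u,A)=\infty$ unless $u\in W^{1,p}_X(A)$), one identifies its density. The key observation, exactly as in \cite{MPSC1}, is that under the (LIC) condition the $X$-gradient is, locally off the negligible closed set $\mathcal N_X$, a surjective bundle map, so that the value of the density at a Lebesgue point $x_0$ depends on $u$ only through $Xu(x_0)$; defining $f(x_0,\eta):= \frac{d F(u_\eta,\cdot)}{d\mathcal L^n}(x_0)$ for any $u_\eta$ with $Xu_\eta(x_0)=\eta$ (e.g.\ an affine function in suitable coordinates) and checking this is well defined and Carathéodory gives \eqref{GammalimitF}. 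Convexity of $f(x,\cdot)$ and membership in $I_{m,p}(\Om,c_0,c_1,a_0,a_1)$ follow from lower semicontinuity of the $\Gamma$-limit (Lemma \ref{lscgeneralized}(i)-type arguments) together with the two-sided growth bound just mentioned.

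The step I expect to be the main obstacle is the localization/representation on sets $A$ where the coefficient matrix $C(x)$ may degenerate, i.e.\ near $\mathcal N_X$. One must check that the measure $F(u,\cdot)$ charges no mass on $\mathcal N_X$ (this uses $|\mathcal N_X|=0$ together with the absolute continuity coming from the upper growth bound) and that the density-identification argument, which requires constructing competitors with prescribed $X$-gradient at a point, can be localized to $\Om_X=\Om\setminus\mathcal N_X$ where $X$ has constant rank $m$. This is precisely the technical heart already handled in \cite[Theorem 4.11]{MPSC1}; the present statement is only a mild extension allowing the lower-order terms $a_0,a_1\in L^1$, which enter merely as additive $L^1$ perturbations of the density and do not affect the representation machinery. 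I would therefore organize the write-up as: (1) abstract compactness of $\Gamma$-limits; (2) the fundamental estimate and the measure property via De Giorgi--Letta; (3) growth bounds inherited by $F$, giving absolute continuity and the domain constraint $u\in W^{1,p}_X(A)$; (4) density identification using (LIC) to get \eqref{GammalimitF}; (5) verification that $f\in I_{m,p}(\Om,c_0,c_1,a_0,a_1)$; at each step citing \cite{MPSC1} and \cite{DM} for the parts that are identical to the known arguments and spelling out only where the lower-order terms or the general family $X$ require a modification.
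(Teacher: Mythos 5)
Your proposal is correct and follows essentially the same route as the proof the paper relies on: the paper itself gives no argument for Theorem \ref{mainthm} but defers to \cite[Theorem 4.11]{MPSC1} and \cite[Theorem 2.3.12]{Ma}, whose proofs use exactly the De Giorgi--Dal Maso localization scheme you outline (abstract $\Gamma$-compactness, fundamental estimate and De Giorgi--Letta, growth bounds inherited by the $\Gamma$-limit, and density identification via (LIC) off the null set $\mathcal N_X$), with the lower-order terms $a_0,a_1\in L^1(\Om)$ entering only as integrable perturbations. Nothing in your sketch conflicts with that argument, so no further comparison is needed.
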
 
Following \cite[Theorem 21.1]{DM}, using \cite[Theorem 4.16]{MPSC1} instead of \cite[Theorem 19.6]{DM}, we get the $\Gamma$-convergence for functionals with boundary data. A proof can be found in \cite[Theorem 2.3.23]{Ma}.
\begin{teo}\label{convboundary}
Let $\Om\subset\Rn$ be a bounded open set, let $\mA$ be the class of all open subsets of $\Omega$, $1<p<\infty$ and let $X$ satisfy (LIC) condition. Let $f_h\in I_{m,p}(\Om,c_0,c_1,a_0,a_1)$, let $F_h:\,L^p(\Om)\times\mA\to\bbR\cup\{\infty\}$ be the functional in \eqref{Fstarh} and, with a little abuse of notation, denote 
\[
F_h(u):=F_h(u,\Omega)\quad\text{for any }u\in L^p(\Omega)\,.
\]
Fix $\varphi\in W^{1,p}_{X}(\Omega)$ and assume that $(F_h)_h$ $\Gamma$-converges in the strong topology of $L^p(\Omega)$ to $F$ satisfying \eqref{GammalimitF}, with $f\in I_{m,p}(\Om,c_0,c_1,a_0,a_1)$. 
Then, $(F_h+\mathbbm{1}_{\varphi})_h$ $\Gamma$-converges to $F+\mathbbm{1}_{\varphi}$ in the strong topology of $L^p(\Omega)$.
\end{teo}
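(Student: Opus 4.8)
The plan is to derive this from the abstract $\Gamma$-convergence machinery in \cite[Theorem 21.1]{DM}, adapting the single ingredient that genuinely uses the Euclidean structure there—namely the density/recovery argument for boundary data—to the $X$-gradient setting via the results already available in the excerpt. Concretely, for fixed $\varphi\in W^{1,p}_X(\Omega)$, adding the indicator $\mathbbm{1}_\varphi$ corresponds to restricting the competitors to the affine subspace $W^{1,p}_{X,\varphi}(\Omega)=\varphi+W^{1,p}_{X,0}(\Omega)$. The $\liminf$ inequality is essentially free: if $u_h\to u$ in $L^p(\Omega)$ and $\liminf_h\bigl(F_h(u_h)+\mathbbm{1}_\varphi(u_h)\bigr)<\infty$, then along a subsequence $u_h\in W^{1,p}_{X,\varphi}(\Omega)$ with $\sup_h F_h(u_h)<\infty$, so by the coercivity built into $(I_3)$ (the bound $c_0|\eta|^p-a_0(x)\le f_h(x,\eta)$) the sequence $(u_h)$ is bounded in $W^{1,p}_X(\Omega)$; hence $u_h-\varphi\rightharpoonup u-\varphi$ weakly in $W^{1,p}_{X,0}(\Omega)$, and since $W^{1,p}_{X,0}(\Omega)$ is weakly closed, $u\in W^{1,p}_{X,\varphi}(\Omega)$, i.e. $\mathbbm{1}_\varphi(u)=0$; the inequality $F(u)\le\liminf_h F_h(u_h)$ is then just the $\Gamma$-$\liminf$ inequality for $(F_h)_h$ on $L^p(\Omega)$, which holds by hypothesis.

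The substantive part is the $\limsup$ (recovery sequence) inequality: given $u\in W^{1,p}_{X,\varphi}(\Omega)$ (the only case to treat, since otherwise the right-hand side is $+\infty$), one must produce $u_h\to u$ in $L^p(\Omega)$ with $u_h\in W^{1,p}_{X,\varphi}(\Omega)$ for all $h$ and $\limsup_h F_h(u_h)\le F(u)$. Here I would follow the De Giorgi--Letta / \cite[Theorem 21.1]{DM} scheme using the \emph{local} $\Gamma$-convergence on all open sets $A\in\mathcal A$ provided by Theorem \ref{mainthm}, together with the integral representation \eqref{GammalimitF} of the limit $F(\cdot,A)$ with the \emph{same} growth constants $c_0,c_1,a_0,a_1$. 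The argument is: first pick any recovery sequence $(w_h)$ for $u$ relative to the unconstrained $\Gamma$-limit on $\Omega$ (so $w_h\to u$ in $L^p$, $F_h(w_h)\to F(u)$); then, using a cut-off between $w_h$ and $\varphi$ near $\partial\Omega$ on a shrinking annular neighborhood and the subadditivity/locality of the functionals together with the growth bound $f_h(x,\eta)\le c_1|\eta|^p+a_1(x)$ to control the energy contributed on the annulus (which is small because $w_h-\varphi\to 0$ in $L^p$ and, after passing to a good subsequence via \cite[Theorem 4.16]{MPSC1}, in a stronger localized sense), one obtains a modified sequence $u_h$ agreeing with $\varphi$ near $\partial\Omega$, hence lying in $W^{1,p}_{X,\varphi}(\Omega)$, with $\limsup_h F_h(u_h)\le F(u)$; a standard lower-semicontinuity/diagonalization closes the estimate. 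The point \cite[Theorem 4.16]{MPSC1} is invoked precisely because it supplies the "fundamental estimate" for these $X$-functionals that legitimizes the gluing on the annulus, replacing \cite[Theorem 19.6]{DM}.

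The main obstacle I anticipate is the gluing step: one needs a version of the fundamental estimate adapted to the $X$-gradient—controlling $F_h$ of a convex combination $\psi w_h+(1-\psi)\varphi$ by $F_h(w_h)+F_h(\varphi)$ on the overlap plus an error governed by $\|w_h-\varphi\|_{L^p}^p$ on the transition region, with constants independent of $h$. This requires that the cutoff $\psi$ be taken adapted to the Carnot--Carathéodory geometry (so that $|X\psi|$ is controlled), which is available because $X$ has Lipschitz coefficients and one may choose $\psi$ smooth; the convexity $(I_2)$ and the two-sided bound $(I_3)$ then do the rest. Once this localized estimate is in hand, the remaining steps are purely formal transcriptions of \cite[Theorem 21.1]{DM}, so I would state the fundamental estimate as the key lemma (citing \cite[Theorem 4.16]{MPSC1}) and then run the De Giorgi--Letta argument verbatim, noting at each use of a Euclidean fact that the corresponding $X$-statement is Theorem \ref{mainthm}, Theorem \ref{Theorem 1.2.3} ($H=W$, so $\Ci$-functions are dense and recovery sequences may be taken regular), or Proposition \ref{psaw} (reflexivity and weak lower semicontinuity of the $W^{1,p}_X$-norm, giving the compactness used in the $\liminf$ step).
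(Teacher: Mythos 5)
Your plan is exactly the route the paper takes: it proves this theorem by following \cite[Theorem 21.1]{DM} with the fundamental estimate of \cite[Theorem 4.16]{MPSC1} replacing \cite[Theorem 19.6]{DM}, which is precisely your gluing-via-fundamental-estimate scheme, with the $\liminf$ part handled by coercivity and weak closedness of $W^{1,p}_{X,0}(\Omega)$ as you describe. The proposal is correct and essentially coincides with the paper's (cited) proof.
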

We conclude this section by providing a $\Gamma$-convergence result of perturbed functionals in the strong topology of $L^p(\Omega)$.
\begin{teo}\label{pertfunct}
Let $\Om\subset\Rn$ be a bounded open set, let $\mA$ be the class of all open subsets of $\Omega$, let $1<p<\infty$ and let $X$ satisfy (LIC) condition. 
Let $f_h,f\in I_{m,p}(\Om,c_0,c_1,a_0,a_1)$ and $F_h,F$ be the functionals in \eqref{Fstarh} and \eqref{GammalimitF} satisfying \eqref{GammalimitFeuc}.
Fixed $\Phi\in L^p(\Om)^m$, let $G^\Phi_h:\,L^p(\Om)\times\mA\to\mathbb{R}\cup\{\infty\}$ be the local functional defined as
\[
G^\Phi_h(u,A):=
\displaystyle{\begin{cases}
\int_{A}f_h(x,Xu(x)+\Phi(x))\,dx&\text{ if }A\in\mA,u\in W^{1,p}_X(A)\\
\infty&\text{ otherwise}
\end{cases}
\,.
}
\]
Then, there exists $G^\Phi:\,L^p(\Om)\times\mA\to\mathbb{R}\cup\{\infty\}$ such that, up to subsequences,
\begin{equation}\label{GammalimitG}
G^\Phi(\cdot,A)=\,\Gamma(L^p(\Om))\text{-}\lim_{h\to \infty}G^\Phi_h(\cdot,A)\text{ for each }A\in\mathcal A
\end{equation}
and $G^\Phi$ admits the following representation
\[
G^\Phi(u,A):=
\displaystyle{\begin{cases}
\int_{A}f(x,Xu(x)+\Phi(x))\,dx&\text{ if }A\in\mA,u\in W^{1,p}_X(A)\\
\infty&\text{ otherwise}
\end{cases}
\,.
}
\]
\end{teo}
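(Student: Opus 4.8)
The plan is to deduce the result from Theorem \ref{mainthm} by a change of variables that turns the perturbed integrands $f_h(x,\eta+\Phi(x))$ into genuine members of a class $I_{m,p}$, so that the already-established $\Gamma$-compactness and integral representation apply. First I would fix $\Phi\in L^p(\Om)^m$ and set $\tilde f_h(x,\eta):=f_h(x,\eta+\Phi(x))$. One checks immediately that each $\tilde f_h$ is Carath\'eodory (Borel measurable in $x$ for fixed $\eta$, continuous — in fact convex — in $\eta$ for a.e. $x$) and convex in $\eta$. The growth bounds require a small computation: from \eqref{3.2} applied to $f_h$ and the elementary inequalities $|\eta+\Phi(x)|^p\le 2^{p-1}(|\eta|^p+|\Phi(x)|^p)$ and $|\eta|^p\le 2^{p-1}(|\eta+\Phi(x)|^p+|\Phi(x)|^p)$, one gets
\[
c_0 2^{1-p}|\eta|^p-\bigl(a_0(x)+c_0|\Phi(x)|^p\bigr)\le \tilde f_h(x,\eta)\le c_1 2^{p-1}|\eta|^p+\bigl(a_1(x)+c_1 2^{p-1}|\Phi(x)|^p\bigr),
\]
so that $\tilde f_h\in I_{m,p}(\Om,\tilde c_0,\tilde c_1,\tilde a_0,\tilde a_1)$ with $\tilde c_0:=c_0 2^{1-p}$, $\tilde c_1:=c_1 2^{p-1}$ and $\tilde a_i\in L^1(\Om)$ since $\Phi\in L^p(\Om)^m$. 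Note that the same holds for $\tilde f(x,\eta):=f(x,\eta+\Phi(x))$.

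Next I would apply Theorem \ref{mainthm} to the sequence $(\tilde f_h)_h$ and the associated local functionals $\tilde F_h(u,A):=\int_A \tilde f_h(x,Xu(x))\,dx=G^\Phi_h(u,A)$: there exist a local functional $\tilde F$ and an integrand $\hat f\in I_{m,p}(\Om,\tilde c_0,\tilde c_1,\tilde a_0,\tilde a_1)$ such that, along a subsequence, $\tilde F(\cdot,A)=\Gamma(L^p(\Om))\text{-}\lim_h G^\Phi_h(\cdot,A)$ for every $A\in\mA$, with $\tilde F(u,A)=\int_A\hat f(x,Xu(x))\,dx$ on $W^{1,p}_X(A)$ and $+\infty$ otherwise. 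Setting $G^\Phi:=\tilde F$ already yields \eqref{GammalimitG} and the integral representation with integrand $\hat f$. What remains is to identify $\hat f$ with $\tilde f$, i.e.\ to show $\hat f(x,\eta)=f(x,\eta+\Phi(x))$, equivalently that $G^\Phi(u,A)=\int_A f(x,Xu(x)+\Phi(x))\,dx$.

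The identification is where the work lies, and it is the main obstacle. The natural route is: pick a sequence $u_h\to u$ in $L^p(\Om)$ realizing the $\Gamma$-limit of $(F_h)_h$ at $u$ (so $F_h(u_h,A)\to F(u,A)$ — here one should take $u\in W^{1,p}_X(A)$, the only interesting case by the growth bounds, and a recovery sequence in $W^{1,p}_X(A)$ with uniformly bounded norm). One would like to "shift" such a recovery sequence for $F$ into one for $G^\Phi$; the difficulty is that $\Phi$ is merely $L^p$, not of the form $Xw$, so $u_h$ cannot simply be translated by a fixed Sobolev function. The cleanest way around this is to invoke Theorem \ref{pertfunct}'s companion localization machinery: since both $G^\Phi$ and the candidate $(u,A)\mapsto\int_A \tilde f(x,Xu(x))\,dx$ are, by construction and by \cite[Theorem 4.11]{MPSC1} / Theorem \ref{mainthm}, integral functionals with integrands in $I_{m,p}$, it suffices to compare their integrands pointwise, and this is done through the standard blow-up / De Giorgi averaging argument localized on small balls, exactly as in the proof of the representation theorem in \cite{MPSC1}: on a ball $B_d(x_0,\rho)$, test with $u\equiv$ the affine-like function whose $X$-gradient is a fixed vector $\eta_0$ (which exists, up to small error, by (LIC) and the density results of Section \ref{sec2}), use that $f_h(\cdot,\eta_0+\Phi(\cdot))\to f(\cdot,\eta_0+\Phi(\cdot))$ in the $\Gamma$-sense forced by \eqref{GammalimitFeuc} together with a measurable-selection/Scorza–Dragoni argument to handle the $x$-dependence of $\Phi$, and pass to the limit $\rho\to0$ using the Lebesgue differentiation theorem. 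Alternatively — and this is probably the intended short proof — one observes that \eqref{GammalimitFeuc} says $F_h(\cdot,A)\to F(\cdot,A)$ in $\Gamma(L^p)$ for \emph{every} open $A$, hence by the locality and the integral representation the convergence is "stable under the perturbation $\Phi$" in the following precise sense: approximate $\Phi$ in $L^p(\Om)^m$ by $\Phi_k=Xw_k$ is \emph{not} available, but approximate $\Phi$ by simple functions $\Phi_k$ (piecewise constant on a partition of $\Om$), for which on each piece the perturbation is a constant vector and the $\Gamma$-limit of $f_h(x,Xu+c)$ is $f(x,Xu+c)$ directly from \eqref{GammalimitFeuc} applied on that piece; then let $k\to\infty$ using the $L^p$-continuity of $\Phi\mapsto\int_A f_h(x,Xu+\Phi)\,dx$ (uniform in $h$, from the Lipschitz-in-$\eta$ bound coming from $(I_3)$ and \cite[Theorem 10.4]{Ro}) to transfer the identification from simple $\Phi_k$ to the general $\Phi$. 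Since a subsequence of a $\Gamma$-convergent sequence keeps the same limit, and the limit integrand is uniquely determined by its values against constant $X$-gradients on small balls, this closes the identification and hence the proof.
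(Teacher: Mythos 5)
Your overall architecture matches the paper's: shift the integrand, check that $g_h^\Phi(x,\eta):=f_h(x,\eta+\Phi(x))$ stays in a class $I_{m,p}$, apply Theorem \ref{mainthm} to get, along a subsequence, a $\Gamma$-limit $G^\Phi$ with an integral representation by some integrand $\hat f$, and then identify $\hat f$ with $f(x,\cdot+\Phi(x))$. The genuine gap is in the identification step, which you rightly flag as the crux but then resolve with an argument that does not work. Your ``intended short proof'' approximates $\Phi$ by simple $\mathbb{R}^m$-valued functions and claims that on each piece, where the perturbation is a constant vector $c\in\mathbb{R}^m$, the $\Gamma$-limit of $u\mapsto\int f_h(x,Xu+c)\,dx$ is $\int f(x,Xu+c)\,dx$ ``directly from \eqref{GammalimitFeuc}''. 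This is exactly where the difficulty you yourself identified (``$\Phi$ is merely $L^p$, not of the form $Xw$'') reappears: a constant vector $c\in\mathbb{R}^m$ is in general \emph{not} of the form $Xw$ either, because $X$ has variable coefficients (for the Grushin fields $X_1=\partial_1$, $X_2=x_1\partial_2$ there is no $w$ with $Xw\equiv(0,1)$). Hence \eqref{GammalimitFeuc} gives no direct information about the constant-shifted functionals and the per-piece identification fails. The same objection hits your blow-up alternative, which tests with ``the affine-like function whose $X$-gradient is a fixed vector $\eta_0$'': such functions need not exist.

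The paper's proof fixes precisely this point: the elementary perturbations are not constants of $\mathbb{R}^m$ but $\Phi_\xi(x)=C(x)\xi$ with $\xi\in\mathbb{R}^n$ constant, because then $\Phi_\xi=Xu_\xi$ with $u_\xi(x)=\langle\xi,x\rangle$ Euclidean-linear, so $G_h^{\Phi_\xi}(u,A)=F_h(u+u_\xi,A)$ and \eqref{GammalimitFeuc} identifies the limit. One then passes to piecewise constant $\tilde\Phi$ with values in $\mathbb{R}^n$, using that $G^\Phi(u,\cdot)$ and $F(u,\cdot)$ are local measures (additivity on pairwise disjoint open sets plus locality), then to general $\tilde\Phi\in L^p(\Omega)^n$ with $\Phi=C\tilde\Phi$ via a continuity estimate of the form $|G^{\Phi_1}(u,A)-G^{\Phi_2}(u,A)|\le c_4\|\Phi_1-\Phi_2\|_{L^p}\bigl(\|Xu\|_{L^p}+\|\Phi_1\|_{L^p}+\|\Phi_2\|_{L^p}+1\bigr)^{p-1}$, proved through recovery sequences and the Lipschitz bound \eqref{ULC}; this is close to the uniform-in-$h$ continuity you mention, but it must be established for the $\Gamma$-limits themselves, not only for the approximating functionals. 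Finally, and this step is entirely missing from your sketch, a general $\Phi\in L^p(\Omega)^m$ need not be of the form $C\tilde\Phi$ with $\tilde\Phi\in L^p(\Omega)^n$ globally: the paper uses (LIC) to set $\tilde\Phi=C^TB^{-1}\Phi$ with $B=CC^T$, which is bounded only on $\{\det B>\eps\}$, truncates there, and lets $\eps\to0$ using again the continuity estimate. Without the $C(x)\xi$-reduction and this (LIC)-based truncation, the identification of the limit integrand with $f(\cdot,\cdot+\Phi(\cdot))$ is not established.
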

Before giving the proof of Theorem \ref{pertfunct}, let us recall the following almost classical result, that we briefly prove for the reader's convenience.
\begin{lem}\label{lem4.5}
Let $f\in I_{m,p}(\Om,c_0,c_1,a_0,a_1)$. Then, there exists a nonnegative constant $c_2$, depending only on $p$ and $c_1$, such that
\begin{equation}\label{ULC}
|f(x,\eta_1)-f(x,\eta_2)|\le\, c_2\,|\eta_1-\eta_2|\,\left(|\eta_1|+|\eta_2|+a(x)^{1/p}\right)^{p-1},
\end{equation}
where $a(x):=\,a_0(x)+a_1(x)$, for a.e. $x\in\Om$, for each $\eta_1,\eta_2\in\Rm$.
\end{lem}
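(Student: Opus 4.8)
The statement is the standard uniform local Lipschitz estimate for convex functions with $p$-growth, so the plan is to reduce it to a one-variable convexity argument applied along the segment joining $\eta_1$ and $\eta_2$. First I would fix $x$ in the (full-measure) set where $f(x,\cdot)$ is convex and satisfies the growth bound \eqref{3.2}, and write $a(x)=a_0(x)+a_1(x)$. By symmetry assume $|\eta_1|\ge|\eta_2|$. Consider the real function $\varphi(t):=f(x,\eta_2+t(\eta_1-\eta_2))$ for $t\in[0,1]$; it is convex, hence its difference quotients are monotone, and in particular
\[
f(x,\eta_1)-f(x,\eta_2)=\varphi(1)-\varphi(0)\le \varphi'(1^-)=f_x'\big(\eta_1,\eta_1-\eta_2\big),
\]
while a symmetric argument using the segment traversed in the opposite direction gives the lower bound $f(x,\eta_1)-f(x,\eta_2)\ge f_x'(\eta_2,\eta_1-\eta_2)\ge -f_x'(\eta_2,\eta_2-\eta_1)$. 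Thus $|f(x,\eta_1)-f(x,\eta_2)|$ is controlled by $|\eta_1-\eta_2|$ times the operator norm of a subgradient of $f(x,\cdot)$ at a point of the segment.

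The second step is to bound the subgradients. For any $\xi\in\partial f_x(\zeta)$ with $\zeta$ on the segment $[\eta_2,\eta_1]$, the subgradient inequality gives, for every $v$ with $|v|=1$,
\[
f(x,\zeta+\rho v)\ge f(x,\zeta)+\rho\,\langle\xi,v\rangle,
\]
so $\rho|\xi|\le \sup_{|w|\le\rho}f(x,\zeta+w)+a_0(x)$ (using $f(x,\zeta)\ge c_0|\zeta|^p-a_0(x)\ge -a_0(x)$). Choosing, say, $\rho=|\eta_1|+a(x)^{1/p}+1$ and using the upper bound in \eqref{3.2}, together with $|\zeta|\le|\eta_1|$ since $|\eta_1|\ge|\eta_2|$, one gets $\rho|\xi|\le c_1(|\zeta|+\rho)^p+a_1(x)\le C(p,c_1)\,\rho^p$, hence $|\xi|\le C(p,c_1)\,\rho^{p-1}\le c_2(|\eta_1|+|\eta_2|+a(x)^{1/p})^{p-1}$ after absorbing the additive $1$ by enlarging the constant (note $\rho\le 2(|\eta_1|+|\eta_2|+a(x)^{1/p})$ once we drop the harmless $+1$, or keep it and note $\rho^{p-1}\lesssim_p (|\eta_1|+|\eta_2|+a(x)^{1/p})^{p-1}+1$ which can be folded in since the right-hand side of \eqref{ULC} is bounded below by a positive constant when $|\eta_1-\eta_2|\neq 0$ — cleaner is simply to take $\rho:=|\eta_1|+|\eta_2|+a(x)^{1/p}$ when this is positive and handle the trivial degenerate case separately). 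Combining with Step 1 yields \eqref{ULC} with $c_2$ depending only on $p$ and $c_1$.

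Alternatively — and this is probably the slickest write-up — one invokes directly \cite[Theorem 10.4]{Ro} (already cited in the paper), which gives a local Lipschitz bound for a finite convex function on $\mathbb R^m$ in terms of its sup over a slightly larger ball; applying it on the ball $\overline{B_{2R}(0)}$ with $R=|\eta_1|+|\eta_2|+a(x)^{1/p}$ and feeding in the upper growth bound from \eqref{3.2} gives the Lipschitz constant $\le \tfrac1R\sup_{\overline{B_{2R}(0)}}f(x,\cdot)\le \tfrac1R(c_1(2R)^p+a_1(x))\le c_2 R^{p-1}$, and since $\eta_1,\eta_2\in\overline{B_R(0)}\subset\overline{B_{2R}(0)}$ the estimate \eqref{ULC} follows. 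I would present this version, spelling out only the choice of $R$ and the arithmetic $c_1(2R)^p+a_1(x)\le c_1 2^p R^p + R^p = (c_1 2^p+1)R^p$ (using $a_1(x)\le a(x)\le R^p$).

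The only genuine subtlety — the "main obstacle", such as it is — is making the constant $c_2$ depend on $p$ and $c_1$ \emph{only}, and not on $a_0,a_1$ or on $x$; this is exactly why one must build the $a(x)^{1/p}$ term into the radius $R$ rather than treating $a_0,a_1$ as fixed data, and why the lower bound $f\ge -a_0(x)$ (rather than $f\ge 0$) has to be carried along carefully. Everything else is the routine convexity/subgradient computation sketched above.
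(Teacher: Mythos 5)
Your proof is correct and is essentially the paper's own argument: the paper sets $\varphi(x,\eta):=f(x,\eta)+a_0(x)$, so that $0\le\varphi(x,\eta)\le \tilde c_1\left(|\eta|+a(x)^{1/p}\right)^p$, and then argues ``as in the proof of \cite[Proposition 2.32]{Da}'', which is exactly the convexity/local-Lipschitz computation (with $a(x)^{1/p}$ built into the radius) that you carry out via subgradients or \cite[Theorem 10.4]{Ro}. The only detail to adjust is that, since you do not shift by $a_0$, your Rockafellar-based write-up should bound the Lipschitz constant on $\overline{B_R(0)}$ by $\frac1R$ times the oscillation of $f(x,\cdot)$ over $\overline{B_{2R}(0)}$ rather than by $\frac1R\sup_{\overline{B_{2R}(0)}}f(x,\cdot)$ (the latter presupposes $f\ge 0$); this only adds the term $a_0(x)\le R^p$ and still yields $c_2$ depending on $p$ and $c_1$ alone.
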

\begin{proof} Without loss of generality, we can assume that $a_0(x)\in\bbR$. Let $\varphi(x,\eta):=\,f(x,\eta)+a_0(x)$ if $(x,\eta)\in\Om\times\Rm$. Then, for a.e. $x\in\Om$ $\varphi(x,\cdot):\,\Rm\to \bbR$ is still convex and there exists a positive constant $\tilde c_1$, depending only on $p$ and $c_1$, such that
\[
c_0\,|\eta|^p\le\,\varphi(x,\eta)\le\,c_1|\eta|^p+a(x)\le \tilde c_1\left(|\eta|+a(x)^{1/p}\right)^p
\]
for a.e. $x\in\Om$, for each $\eta\in\Rm$. Then, arguing as in the proof of \cite[Proposition 2.32]{Da}, there exists a nonnegative constant $c_2$, depending only on $p$ and $c_1$, such that 
\[
\begin{split}
|f(x,\eta_1)-f(x,\eta_2)|&=\,|\varphi(x,\eta_1)-\varphi(x,\eta_2)|\\
&\le\,c_2\,|\eta_1-\eta_2|\,\left(|\eta_1|+|\eta_2|+a(x)^{1/p}\right)^{p-1}
\end{split}
\]
for a.e. $x\in\Omega$, for each $\eta_1,\eta_2\in\Rm$.
\end{proof}
\begin{proof}[Proof of Theorem \ref{pertfunct}] Fix $\Phi\in L^p(\Om)^m$ and, for each $h\in\mathbb{N}$, define $g_h^\Phi(x,\eta):=\,f_h(x,\eta+\Phi(x))$ a.e. $x\in\Om$ and for any $\eta\in\Rm$.
Then,
\begin{equation}\label{ghclass}
    g_h^\Phi\in I_{m,p}(\Om,c_0,c_3,\tilde a_0,\tilde a_1)\,,
\end{equation}
with $\tilde a_0(x):=\,a_0(x)-c_0|\Phi(x)|^p$ and $\tilde a_1(x):=\,a_1(x)+c_3|\Phi(x)|^p$, for a suitable positive constant $c_3$ (depending only on $p$ and $c_1$). 

By Theorem \ref{mainthm}, there exist $G^\Phi:\,L^p(\Om)\times\mA\to\mathbb{R}\cup\{\infty\}$ and
\begin{equation}\label{gPhiinI}
g^\Phi\in I_{m,p}(\Om,c_0,c_3,\tilde a_0,\tilde a_1)
\end{equation}
such that, up to subsequences, \eqref{GammalimitG} holds and $G^\Phi$ can be represented as
\[
G^\Phi(u,A):=
\displaystyle{\begin{cases}
\int_{A}g^\Phi(x,Xu(x))\, dx&\text{ if }A\in\mA,\,u\in W^{1,p}_X(A)\\
\infty&\text{ otherwise}
\end{cases}
\,.
}
\]

To conclude, we show that
\begin{equation}\label{thesis}
G^\Phi(u,A)=\int_{A}f(x,Xu(x)+\Phi(x))\,dx
\end{equation}
for each $A\in\mA$ and $u\in W^{1,p}_{X}(A)$.
We divide the proof of \eqref{thesis} in three steps.
\medskip

\noindent{\bf 1st step.} Let us first prove the existence of a positive constant $c_4$, depending only on $c_0,\,c_1,\,c_2,\,a_0,a_1$ and $p$, such that
\begin{equation}\label{contest}
\begin{split}
|G^{\Phi_1}(u,A)&-G^{\Phi_2}(u,A)|\\&\le\,c_4\,\norma{\Phi_1-\Phi_2}_{L^p}\,\left(\norma{Xu}_{L^p}+\norma{\Phi_1}_{L^p}+\norma{\Phi_2}_{L^p}+1\right)^{p-1}
\end{split}
\end{equation}
for any $\Phi_1,\,\Phi_2\in L^p(\Om)^m$, $A\in\mA$ and $u\in W^{1,p}_X(A)$, where all norms above refer to $A$.
\medskip

Fix $\Phi_1,\,\Phi_2\in L^p(\Om)^m$, $A\in\mA$ and $u\in W^{1,p}_X(A)$. By \eqref{GammalimitG} and \cite[Proposition 8.1]{DM}, there exists a sequence $(u_h)_h\subset L^p(\Omega)\cap W^{1,p}_X(A)$, strongly convergent to $u$ in $L^p(\Omega)$, such that
\begin{equation}\label{uhcoverGPhi2}
G^{\Phi_2}(u,A)=\lim_{h\to\infty}G_h^{\Phi_2}(u_h,A)
\end{equation}
and
\begin{equation}\label{uhcoverGPhi1}
G^{\Phi_1}(u,A)\leq\liminf_{h\to\infty}G_h^{\Phi_1}(u_h,A)\,.
\end{equation}
Then, by \eqref{ULC}, \eqref{ghclass} and H\"older's inequality, there exist positive constants $\alpha_1,\alpha_2$, depending only $c_0,\,c_1,\,c_2,\,a_0,\,a_1$ and $p$, such that
\begin{equation}\label{gphi12}
\begin{split}
G_h^{\Phi_1}(u_h,A)&-G_h^{\Phi_2}(u_h,A)\\&\le\,\int_A|f_h(x,Xu_h+\Phi_1)-f_h(x,Xu_h+\Phi_2)|\,dx\\
&\le\,\alpha_1\,\norma{\Phi_1-\Phi_2}_{L^p}\,\left(\norma{Xu_h}_{L^p}+\norma{\Phi_1}_{L^p}+\norma{\Phi_2}_{L^p}+1\right)^{p-1}\\
&\le\,\alpha_2\,\norma{\Phi_1-\Phi_2}_{L^p}\,\left(G_h^{\Phi_2}(u_h,A)^{1/p}+\norma{\Phi_1}_{L^p}+\norma{\Phi_2}_{L^p}+1\right)^{p-1}
\end{split}
\end{equation}
and, by \eqref{gPhiinI}, \eqref{uhcoverGPhi2} and \eqref{uhcoverGPhi1}, there exists a positive constant $c_4$, depending only on $c_0,\,c_1,\,c_2,\,a_0,a_1$ and $p$, such that
\begin{align*}
G^{\Phi_1}(u,A)&-G^{\Phi_2}(u,A)\\&\le\,\alpha_2\,\norma{\Phi_1-\Phi_2}_{L^p}\,\left(G^{\Phi_2}(u,A)^{1/p}+\norma{\Phi_1}_{L^p}+\norma{\Phi_2}\,_{L^p}+1\right)^{p-1}\\
&\le\,c_4\,\norma{\Phi_1-\Phi_2}_{L^p}\,\left(\norma{Xu}_{L^p}+\norma{\Phi_1}_{L^p}+\norma{\Phi_2}_{L^p}+1\right)^{p-1}.
\end{align*}
By exchanging the roles of $\Phi_1$ and $\Phi_2$, we obtain \eqref{contest}.
\medskip

\noindent{\bf 2nd step.} Let us prove \eqref{thesis}, when $\Phi$ has the form
\begin{equation}\label{simpleformPhi}
\Phi(x)=\,C(x)\,\tilde \Phi(x)\quad\text{a.e. }x\in\Om\,,
\end{equation}
for some $\tilde \Phi\in L^p(\Om)^n$, where $C(x)$ denotes the coefficient matrix of the $X$-gradient \eqref{coeffmatrvf}.
Let us divide this step in three cases.
\medskip

\noindent{\bf Case 1.} Suppose $\tilde\Phi(x)=\xi\in\mathbb{R}^n$ constant and denote $\Phi_\xi(x):=\,C(x)\,\xi$.

\noindent If $u_\xi(x):=\scalare \xi x$ for any $x\in\Rn$, then
\[
G_h^{\Phi_\xi}(u,A)=\,F_h(u+\,u_\xi,A)
\]
%for any $A\in\mathcal{A}$ and $u\in W^{1,p}_X(A)$ 
and, by \eqref{GammalimitFeuc}, \eqref{GammalimitF} and \eqref{GammalimitG}, we get
\begin{equation}\label{GF}
    G^{\Phi_\xi}(u,A)=\,F(u+\,u_\xi,A)=\,\int_A f(x,Xu(x)+\Phi_\xi(x))\,dx
\end{equation}
% that is,
% \[
% G^{\Phi_\xi}(u,A)=\,\int_Af(x,Xu(x)+\Phi(x))\,dx
% \]
for any $A\in\mathcal{A}$ and $u\in W^{1,p}_X(A)$.
\medskip

\noindent{\bf Case 2.} Suppose $\tilde\Phi$ {\it piecewise constant}, i.e., there exist $\xi^1,\dots,\xi^N\in\Rn$ and $A_1,\dots,A_N$ pairwise disjoint open sets such that $|\Om\setminus\cup_{i=1}^N A_i|=\,0$ and
\[
\tilde \Phi(x):=\,\sum_{i=1}^N\chi_{A_i}(x)\,\xi^i.
\]
Fix $A\in\mathcal{A}$ and $u\in W^{1,p}_X(A)$ and denote $\Phi_{\xi^i}(x):=\,C(x)\,\xi^i$. Since $G^{\Phi}(u,\cdot)$ is a measure, then, by additivity on pairwise disjoint open sets and locality, it holds that
\begin{equation}\label{GF1}
    G^{\Phi}(u,A)=\,\sum_{i=1}^N G^{\Phi}(u,A\cap A_i)=\,\sum_{i=1}^N G^{\Phi_{\xi^i}}(u,A\cap A_i)\,.
\end{equation}
Let $\tilde u(x):=\langle\tilde{\Phi}(x),x\rangle$ for any $x\in\Rn$. Then,
\[
\tilde u(x)=u_{\xi^i}(x)=\langle\xi^i,x\rangle\quad\text{a.e. }x\in A\cap A_i
\]
for any $i=1,\dots,N$ and, by locality of $F$,
\begin{equation}\label{GF2}
    F(u+\tilde u,A\cap A_i)=F(u+u_{\xi^i},A\cap A_i)\,.
\end{equation}
Therefore, by \eqref{GammalimitF}, \eqref{GF}, \eqref{GF1}, \eqref{GF2} and additivity of $F$ on pairwise disjoint open sets, we get
\begin{equation}\label{GF4}
\begin{split}
    G^{\Phi}(u,A)&=\,\sum_{i=1}^N G^{\Phi_{\xi^i}}(u,A\cap A_i)=\,\sum_{i=1}^N F(u+u_{\xi^i},A\cap A_i)\\
    &=\,\sum_{i=1}^N F(u+\tilde u,A\cap A_i)=F(u+\tilde u,A)\\
    &=\,\int_{A}f(x,Xu(x)+\Phi(x))\,dx
\end{split}
\end{equation}
for any $A\in\mathcal{A}$ and $u\in W^{1,p}_X(A)$.
\medskip

\noindent{\bf Case 3.} Let $\Phi$ have the form \eqref{simpleformPhi}, let $(\tilde \Phi_j)_j$ be a sequence of piecewise constant functions converging, as $j\to\infty$, to $\tilde\Phi$ strongly in $L^p(\Om)^n$ and define $\Phi_j(x):=C(x)\,\tilde \Phi_j(x)$ a.e. $x\in\Omega$.
Since 
\begin{equation}\label{matrixclimited}
    C\in L^\infty(\Om)^{mn},
\end{equation}
then
\begin{equation}\label{GF0}
    (\Phi_j)_j\text{ strongly converges to }\Phi\text{ in }L^p(\Om)^m.
\end{equation}
If $A\in\mA$ and $u\in W_X^{1,p}(A)$ then, by \eqref{contest},
\begin{equation}\label{GF3}
    G^{\Phi_j}(u,A)\to  G^{\Phi}(u,A)\quad\text{as }j\to\infty
\end{equation}
and, by \eqref{GF4} and H\"older's inequality,  %since $f\in I_{m,p}(\Omega,c_0,c_1,a_0,a_1)$,
it holds that
\begin{equation}\label{GF5}
\begin{split}
\big|G^{\Phi_j}(u,A)&-\int_Af(x,Xu+\Phi)\,dx\big|\\
&\le\,\int_A|f(x,Xu+\Phi_j)-f(x,Xu+\Phi)|\,dx\\
&\le\,\alpha_1\,\norma{\Phi_j-\Phi}_{L^p}\,\left(\norma{Xu}_{L^p}+\norma{\Phi_j}_{L^p}+\norma{\Phi}_{L^p}+1\right)^{p-1},
\end{split}
\end{equation}
where $\alpha_1$ is the positive constant given in \eqref{gphi12}.
Therefore, \eqref{thesis} follows from \eqref{GF0}, \eqref{GF3} and \eqref{GF5}.
\medskip
 
\noindent{\bf 3rd step.} Let us finally prove \eqref{thesis} in the general case. 
\medskip

\noindent Fix $\Phi\in L^p(\Om)^m$ and $x\in\Omega_X$ (see Definition \eqref{frkcond}). Then, in virtue of \cite[Lemma 3.3]{MPSC1}, there exists $\tilde\Phi(x)\in\Rn$ such that
\[
C(x)\,\tilde\Phi(x)=\,\Phi(x)
\]
and $\tilde\Phi$ can be represented as
\begin{equation}\label{tildePhirepres}
    \tilde\Phi(x)=\,C(x)^T\,B(x)^{-1}\,\Phi(x)\,,
\end{equation}
where $B(x)$ is the $m\times m$ symmetric invertible matrix defined by
\[
B(x):=\,C(x)\,C(x)^T.
\]
Since $B(x)$ is positive semi-definite for any $x\in\Om$ and it is positive definite if and only if $x\in\Omega_X$, it holds that
\begin{equation}\label{NXnull}
    |\Omega\setminus\Omega_X|:=|\mathcal N_X|=\,0
\end{equation}
and
\begin{equation*}%\label{charNX}
    \Om_X=\,\left\{x\in\Om:\,{\rm det }B(x)>\,0\right\},\ \ \mathcal N_X=\,\left\{x\in\Om:\,{\rm det }B(x)=\,0\right\}.
\end{equation*}
For any $\eps>0$, define 
\[
\Om_\eps:=\,\left\{x\in\Om: {\rm det }B(x)>\,\eps\right\}.
\]
Since $B\in L^\infty(\Om)^{m^2}$, then, by Cramer's rule and by \eqref{matrixclimited} and \eqref{tildePhirepres},
\begin{equation}\label{B-1Linfty}
    B^{-1}\in L^\infty(\Om_\eps)^{m^2}\quad\text{and}\quad\tilde\Phi\in L^p(\Om_\eps)^n.
\end{equation}
Let $\tilde\Phi_\eps:\Omega\to\Rn$ and $\Phi_\eps:\Omega\to\Rm$ be, respectively, defined by
\[
\tilde\Phi_\eps(x):=
\begin{cases}
\tilde\Phi(x)%C(x)^T\,B(x)^{-1}\Phi(x)
&\text{ if }x\in\Omega_\eps\\
0&\text{ if }x\in\Omega\setminus\Omega_\eps
\end{cases}
\]
and
\[
\Phi_\eps(x):=C(x)\tilde\Phi_\eps(x)=
\begin{cases}
\Phi(x)&\text{ if }x\in\Omega_\eps\\
0&\text{ if }x\in\Omega\setminus\Omega_\eps
\end{cases}
\,.
\]
By \eqref{B-1Linfty},
\begin{equation}\label{philp}
    \tilde\Phi_\eps\in L^p(\Omega)^n
\end{equation}
and, by \eqref{NXnull} and H\"older's inequality, it follows that
\begin{equation}\label{PhiepsconvPhi}
\Phi_\eps\to\Phi\quad\text{strongly in }L^p(\Om)^m\text{ as }\eps\to 0\,. 
\end{equation}
If $A\in\mA$ and $u\in W_X^{1,p}(A)$ then, by \eqref{ULC}, \eqref{contest}, \eqref{philp}, by H\"older's inequality and the second step of the proof, there exists a positive constant $c_5$, depending only on $c_0,\,c_1,\,c_2,\,a_0,a_1$ and $p$, such that
\[
\begin{split}
\big|G^{\Phi}(u,A)&-\int_Af(x,Xu+\Phi)\,dx\big|\\
&\le\,\left|G^{\Phi}(u,A)-G^{\Phi_\eps}(u,A)\right|+\,\left|G^{\Phi_\eps}(u,A)-\int_Af(x,Xu+\Phi)\,dx\right|\\
%&=\left|G^{\Phi}(u,A)-G^{\Phi_\eps}(u,A)\right|+\,\left|\int_Af(x,Xu+\Phi_\eps)\,dx-\int_Af(x,Xu+\Phi)\,dx\right|\\
&\le\,c_5\,\|\Phi_\eps-\Phi\|_{L^p(A)^m}\left(\|Xu\|_{L^p(A)^m}+\|\Phi_\eps\|_{L^p(A)^m}+\|\Phi\|_{L^p(A)^m}+1\right)^{p-1}
\end{split}
\]
and \eqref{thesis} follows by \eqref{PhiepsconvPhi}, as $\eps\to 0$.
\end{proof}
%%%%%%%%%%%%%%%%%%%%%%%%%

\section{Convergence of minima, minimizers and momenta}\label{sec4}

%%%%%%%%%%%%%%%%%%%%%%%%%%

\subsection{Convergence of minima and minimizers}

%%%%%%%%%%%%%%%%%%%%%%%%%%

By Theorem \ref{mainthm}, there exist $F:\,L^p(\Om)\times\mA\to\bbR\cup\{\infty\}$ and $f\in I_{m,p}(\Om,c_0,c_1,a_0,a_1)$ such that, up to subsequences,
\begin{equation*}
F(\cdot,A)=\,\Gamma(L^p(\Om))\text{-}\lim_{h\to\infty} F_h(\cdot,A)\quad\text{for each }A\in\mathcal{A}
\end{equation*}
and $F$ admits the representation \eqref{GammalimitF}. If, in addition, the sequence $( F_h(\cdot, A))_h$ is equicoercive in $L^p(\Omega)$ (see \cite[Definition 7.6]{DM}) then, by \cite[Theorem 7.8]{DM}, $F(\cdot, A)$ attains its minimum in $L^p(\Omega)$ and 
\[
\min_{u\in L^p(\Omega)} F(u,A)=\lim_{h\to\infty} \inf_{u\in L^p(\Omega)} F_h(u,A)\quad\text{for each }A\in\mathcal{A}\,.
\]
Let us now study the asymptotic behaviour of minima and minimizers of problems \eqref{minpb}.
We first need the following preliminary result.
\begin{teo}\label{Corollary 3.1.2}
Let $\Omega\subset\mathbb{R}^n$ be a bounded open set, let $1<p<\infty$ and let $X$ satisfy (LIC) condition and conditions (H1), (H2) and (H3). 
Let $f\in I_{m,p}(\Omega,c_0,c_1,a_0,a_1)$, let $g$ satisfy \eqref{crescitag} and let $F,G:W^{1,p}_X(\Omega)\to\bbR$ be the functionals defined, respectively, by
\[
F(u):=\int_{\Om}f(x,Xu(x))\,dx
\quad\text{and}\quad
G(u):=
\int_{\Om}g(x,u(x))\,dx\,.
\]
Fixed $\varphi\in W^{1,p}_X(\Om)$, let $\Xi,\Xi^{\varphi}:W^{1,p}_X(\Omega)\to\mathbb{R}$ be, respectively, defined by
\[
\Xi:=F+G\quad\text{and}\quad \Xi^{\varphi}:=F+G+\mathbbm{1}_{\varphi}\,.
\]
Then, the minimum problems
\begin{align}\label{proc2}
\min_{u\in\W 1 p (\Om)}\Xi(u)
\end{align}
and 
\begin{align}\label{proc}
\min_{u\in W^{ 1 ,p}_{X,\varphi} (\Om)}\Xi^{\varphi}(u)
\end{align}
have at least a solution, provided that
\begin{equation}\label{constgpos}
0<\,d_0\le\,d_1,
\end{equation}
and
\begin{equation*}
\text{\eqref{constg} holds,}
\end{equation*}
respectively, where $d_0$ and $d_1$ are the constants in \eqref{crescitag}. If, in addition,
\begin{itemize}
\item[(i)] $g(x,\cdot)$ is strictly convex on $\mathbb{R}$ for a.e. $x\in \Omega$, then both solutions in \eqref{proc2} and  \eqref{proc} are unique; 
\item[(ii)] $f(x,\cdot)$ is strictly convex on $\Rm$ and $g(x,\cdot)$ is convex on $\mathbb{R}$ for a.e. $x\in \Omega$, then the solution in \eqref{proc} is unique.
\end{itemize}
Moreover,
\begin{align}\label{proc3}
\min_{u\in\W 1 p (\Om)}\Xi(u)
=\inf_{u\in\Co 1(\Om)\cap\W 1 p (\Om)}\Xi(u)\,.
\end{align}
\end{teo}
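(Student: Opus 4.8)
The plan is to establish existence for the two minimum problems by the direct method of the calculus of variations, carried out in the reflexive Banach space $W^{1,p}_X(\Omega)$, and then to prove the density identity \eqref{proc3} separately by a truncation-and-approximation argument.

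First I would treat problem \eqref{proc}. The key point is coercivity of $\Xi^\varphi$ on the affine space $W^{1,p}_{X,\varphi}(\Omega)$. Using the lower bounds in \eqref{3.2} for $f$ and in \eqref{crescitag} for $g$, one gets
\[
\Xi^{\varphi}(u)\ge c_0\int_\Om|Xu|^p\,dx+d_0\int_\Om|u|^p\,dx-\|a_0\|_{L^1(\Om)}-\|b_0\|_{L^1(\Om)}
\]
for $u\in W^{1,p}_{X,\varphi}(\Omega)$. Since \eqref{constg} gives $d_0>-c_0c_{p,\Omega}$, we may write $d_0=-c_0 c$ with $c<c_{p,\Om}$ (if $d_0\ge 0$ coercivity is immediate; the delicate case is $d_0<0$), and apply Lemma \ref{PoincareW1pvarphi} with this $c$ to obtain constants $k_1>0$, $k_2\ge 0$ with
\[
c_0\int_\Om|Xu|^p\,dx+d_0\int_\Om|u|^p\,dx\ge c_0 k_1\|u\|_{W^{1,p}_X(\Omega)}^p-c_0 k_2,
\]
so $\Xi^\varphi$ is coercive on $W^{1,p}_{X,\varphi}(\Omega)$. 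For lower semicontinuity in the weak topology of $W^{1,p}_X(\Omega)$: any weakly convergent sequence $u_j\rightharpoonup u$ has $Xu_j\rightharpoonup Xu$ weakly in $L^p(\Om)^m$, and, by Theorem \ref{immersion} together with a localization as in Proposition \ref{compemb} (or directly Theorem \ref{immersionnot0} if extra regularity is available) — actually, since $W^{1,p}_{X,\varphi}-\varphi=W^{1,p}_{X,0}$ and the latter embeds compactly in $L^p(\Om)$, one has $u_j\to u$ strongly in $L^p(\Om)$; hence $F$ is weakly l.s.c. by Lemma \ref{lscgeneralized}(ii) applied with $f_h\equiv f$, and $G$ is weakly continuous because $g(x,\cdot)$ has $p$-growth and $u_j\to u$ in $L^p$. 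The indicator $\mathbbm 1_\varphi$ is weakly l.s.c. since $W^{1,p}_{X,0}(\Om)$ is weakly closed. A minimizing sequence is therefore bounded, admits a weakly convergent subsequence, and its limit lies in $W^{1,p}_{X,\varphi}(\Om)$ and minimizes $\Xi^\varphi$. Problem \eqref{proc2} is handled the same way, now over all of $W^{1,p}_X(\Om)$ with no boundary constraint; here coercivity follows directly from $c_0>0$ and $d_0>0$ in \eqref{constgpos}, using the equivalent norm on $W^{1,p}_X$ given by the full $W^{1,p}_X$-norm, and weak l.s.c. of $F+G$ as above.

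For the uniqueness statements (i) and (ii): $W^{1,p}_{X,\varphi}(\Om)$ and $W^{1,p}_X(\Om)$ are convex, $\Xi$ and $\Xi^\varphi$ are convex (sums of convex functionals by $(I_2)$ and convexity of $g(x,\cdot)$), and strict convexity of either $g(x,\cdot)$ (case (i)) or $f(x,\cdot)$ (case (ii)) upgrades this to strict convexity of the functional on the relevant space — for (ii) one uses that if $u_1\ne u_2$ in $W^{1,p}_{X,\varphi}$ then $X(u_1-u_2)$ is not a.e. zero on a set of positive measure (again because $W^{1,p}_{X,0}$ embeds into $L^p$ via the Poincaré inequality, so $Xu=0$ a.e. forces $u=0$), making $F$ strictly convex along the segment. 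Strict convexity gives at most one minimizer.

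Finally, for the density identity \eqref{proc3}: the inequality $\inf_{C^1\cap W^{1,p}_X}\Xi\le\min_{W^{1,p}_X}\Xi$ is clear, but $C^1(\Om)\cap W^{1,p}_X(\Om)$ need not be dense in $W^{1,p}_X(\Om)$ — however by Theorem \ref{Theorem 1.2.3} ($H=W$), for any $u\in W^{1,p}_X(\Om)$ there is $(u_j)\subset C^1(\Om)\cap W^{1,p}_X(\Om)$ with $u_j\to u$ in $W^{1,p}_X(\Om)$; in particular $u_j\to u$ in $L^p(\Om)$ and $Xu_j\to Xu$ in $L^p(\Om)^m$. Then $F(u_j)\to F(u)$ by Lemma \ref{lem4.5} (the uniform local Lipschitz estimate \eqref{ULC}, combined with Hölder) and $G(u_j)\to G(u)$ by the $p$-growth bound \eqref{crescitag} and dominated convergence (after passing to an a.e.-convergent subsequence and dominating via $|u_j|^p\le$ a convergent sequence of $L^1$ functions). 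Hence $\Xi(u_j)\to\Xi(u)$, giving $\inf_{C^1\cap W^{1,p}_X}\Xi\le\Xi(u)$ for every $u$, and taking the infimum over $u$ yields the reverse inequality. Combined with the existence of a minimizer already proved, \eqref{proc3} follows.

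The main obstacle is the coercivity of $\Xi^\varphi$ when $d_0<0$: here one genuinely needs the sharp Poincaré constant $c_{p,\Om}$ and the quantitative estimate of Lemma \ref{PoincareW1pvarphi} on the affine space $W^{1,p}_{X,\varphi}(\Om)$, rather than on $W^{1,p}_{X,0}$; everything else is a routine application of the direct method once the compact embedding $W^{1,p}_{X,0}(\Om)\hookrightarrow L^p(\Om)$ (Theorem \ref{immersion}) is in hand.
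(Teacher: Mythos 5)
Your overall route coincides with the paper's: the direct method in the reflexive space $W^{1,p}_X(\Omega)$, with Lemma \ref{PoincareW1pvarphi} giving coercivity of $\Xi^{\varphi}$ on $W^{1,p}_{X,\varphi}(\Omega)$ when $d_0$ may be negative, and \eqref{proc3} obtained from the density of $C^1(\Omega)\cap W^{1,p}_X(\Omega)$ (Theorem \ref{Theorem 1.2.3}) plus continuity of $\Xi$ along strongly convergent sequences (the paper gets this continuity via Pratt's theorem, you via the Lipschitz estimate \eqref{ULC} and generalized dominated convergence --- equivalent in substance). Your treatment of \eqref{proc}, of the uniqueness statements (i)--(ii) and of \eqref{proc3} is sound, up to the small slip that the ``clear'' inequality in \eqref{proc3} is $\inf_{C^1\cap W^{1,p}_X}\Xi\ge\min_{W^{1,p}_X}\Xi$ (inclusion of domains); the opposite one is exactly what your density argument supplies.

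There is, however, a genuine gap in your handling of the unconstrained problem \eqref{proc2}. You dispose of it with ``weak l.s.c.\ of $F+G$ as above'', but ``above'' you deduced strong $L^p(\Omega)$ convergence of the sequence from the compact embedding of $W^{1,p}_{X,0}(\Omega)$ into $L^p(\Omega)$ (Theorem \ref{immersion}), which was legitimate only because the sequence lay in $\varphi+W^{1,p}_{X,0}(\Omega)$. For \eqref{proc2} a minimizing sequence is merely bounded in $W^{1,p}_X(\Omega)$, and under the hypotheses of the theorem ($\Omega$ an arbitrary bounded open set) the paper only provides compactness of $W^{1,p}_X(\Omega)$ into $L^p_{loc}(\Omega)$ (Proposition \ref{compemb}); the global embedding needs extra regularity of $\Omega$ (Theorem \ref{immersionnot0}) and is false in general. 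Since $g(x,\cdot)$ is not assumed convex, you also cannot fall back on weak $L^p$ lower semicontinuity of $G$. The step can be repaired: extract a subsequence converging a.e.\ in $\Omega$ (from $L^p_{loc}$ convergence and a diagonal argument), observe that $g(x,u_h)+b_0(x)\ge d_0|u_h|^p\ge 0$ with $b_0\in L^1(\Omega)$ because $d_0>0$ in \eqref{constgpos}, and apply Fatou to get $\liminf_h G(u_h)\ge G(u)$; together with the weak lower semicontinuity of $F$ this closes the argument. A smaller point of the same kind: you invoke Lemma \ref{lscgeneralized}(ii) with $f_h\equiv f$, but that lemma is stated for $a_0,a_1\in L^\infty(\Omega)$, whereas here $a_0,a_1\in L^1(\Omega)$; for a single convex integrand bounded below by $-a_0\in L^1(\Omega)$ the weak lower semicontinuity of $\Phi\mapsto\int_\Omega f(x,\Phi)\,dx$ on $L^p(\Omega)^m$ is classical, so the mathematics is fine, but as written the cited lemma does not cover your case.
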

\begin{proof}
The proof of \eqref{proc2} and \eqref{proc} relies on the direct method of the calculus of variations. Indeed, existence and uniqueness of  \eqref{proc2} can be proved as in \cite[Theorem 2.6]{DM},  by replacing $W^{1,p}(\Om)$ with $W^{1,p}_X(\Om)$ and  $K$ with  $W^{1,p}_X(\Om)$. Existence and uniqueness of \eqref{proc}  can be proved as in  \cite[Theorem 2.8]{DM}, by replacing $W^{1,p}(\Om)$ with $W^{1,p}_X(\Om)$,  $W^{1,p}_\varphi(\Om)$ with $W^{1,p}_{X,\varphi}(\Om)$, $K$ with  $W^{1,p}_{X,\varphi}(\Om)$ and by using Lemma \ref{PoincareW1pvarphi} instead of \cite[Lemma 2.7]{DM}.

We conclude by proving \eqref{proc3}. Let $(u_h)_h$ be convergent to $u$ in the strong topology of $W^{1,p}_{X}(\Omega)$ and assume that, up to subsequences, $(u_h)_h$ and $(Xu_h)_h$ converge, respectively, to $u$ and $Xu$ a.e. in $\Omega$. Then, $(f(\cdot,Xu_h)+g(\cdot,u_h))_h$ converges to $f(\cdot,Xu)+g(\cdot,u)$ a.e. in $\Omega$ and, by \eqref{3.2} and \eqref{crescitag}, there exist $\varphi_h,\varphi\in L^1(\Omega)$, $h\in\mathbb{N}$, such that
\[
\varphi_h\to\varphi\quad\text{in }L^1(\Omega)\quad\text{as }h\to\infty
\]
and
\begin{align*}
|f(x,Xu_h)+g(x,u_h)|\leq\varphi_h(x)\quad\text{for any }h\in\mathbb{N}\,.
\end{align*}
Therefore, by Pratt's theorem \cite[Theorem A.10]{R}, we can infer that 
\[
\Xi(u_h)\to\Xi(u)\quad\text{as }h\to\infty\,.
\]
Since this holds for a subsequence of any subsequence of the original sequence $(u_h)_h$, then the strong continuity of $\Xi$ follows and, since by Theorem \ref{Theorem 1.2.3} $\ci^{\infty}(\Omega)$ is (strongly) dense in $W^{1,p}_{X}(\Omega)$, we get \eqref{proc3}.
\end{proof}
\begin{oss} Observe that, arguing as in \cite[Corollary 2.9] {DM}, also a linear functional $G:\,L^p(\Om)\to\bbR$, $G(u):=\int_\Om g(x)\,u(x)\,dx$, for a given function $g\in L^{p'}(\Om)$, is allowed in minimization problem \eqref{proc}.
\end{oss}
Before proving Theorem \ref{convminimiz}, we need another technical result.
\begin{lem}\label{coerc}
Let $\Omega$ be a bounded open subset of $\mathbb{R}^n$, let $1<p<\infty$ and let $X$ satisfy (LIC) condition and conditions (H1), (H2) and (H3).
Fixed $\varphi\in W^{1,p}_X(\Omega)$, let $\Psi^\varphi:L^p(\Omega)\to[0,\infty]$ be defined by
\begin{equation}\label{Psiphi}
\Psi^\varphi(u):=\begin{cases}
\int_\Omega |Xu(x)|^p\,dx\quad&\text{if }u\in W^{1,p}_{X,\varphi}(\Omega)\\
\infty\quad&\text{otherwise}
\end{cases}.
\end{equation}
Then, $\Psi^\varphi$ is coercive and lower semicontinuous in the strong topology of $L^p(\Omega)$.
\end{lem}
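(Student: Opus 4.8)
The plan is to check the two properties separately; both rest on the Poincar\'e inequality \eqref{PoincareW1p0for} (Proposition \ref{PoincareW1p0}) and on the compact embedding of $W^{1,p}_{X,0}(\Omega)$ into $L^p(\Omega)$ (Theorem \ref{immersion}).

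\textbf{Coercivity.} I would fix $t\in\mathbb{R}$ and take any $u$ in the sublevel set $\{\Psi^\varphi\le t\}$. Then $u-\varphi\in W^{1,p}_{X,0}(\Omega)$ and $\int_\Omega|Xu|^p\,dx\le t$, so by the triangle inequality $\int_\Omega|X(u-\varphi)|^p\,dx$ is bounded by a constant depending only on $t$, $p$ and $\|\varphi\|_{W^{1,p}_X(\Omega)}$; applying \eqref{PoincareW1p0for} to $u-\varphi$ then bounds $\|u-\varphi\|_{W^{1,p}_{X,0}(\Omega)}$ uniformly on the sublevel set. Hence $\{u-\varphi:\Psi^\varphi(u)\le t\}$ is a bounded subset of $W^{1,p}_{X,0}(\Omega)$, which is precompact in $L^p(\Omega)$ by Theorem \ref{immersion}; translating by $\varphi$ shows that $\{\Psi^\varphi\le t\}$ is precompact in $L^p(\Omega)$, which is the required coercivity in the sense of \cite[Definition 7.6]{DM}.

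\textbf{Lower semicontinuity.} Since the strong topology of $L^p(\Omega)$ is metrizable it suffices to work with sequences. Let $u_h\to u$ in $L^p(\Omega)$; I would assume $L:=\liminf_h\Psi^\varphi(u_h)<\infty$ (otherwise there is nothing to prove) and pass to a subsequence, not relabeled, along which $\Psi^\varphi(u_h)\to L$ and $\Psi^\varphi(u_h)<\infty$ for every $h$. Then $u_h\in W^{1,p}_{X,\varphi}(\Omega)$ with $\sup_h\int_\Omega|Xu_h|^p\,dx<\infty$, and the same Poincar\'e argument as above gives $\sup_h\|u_h-\varphi\|_{W^{1,p}_{X,0}(\Omega)}<\infty$. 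Since $W^{1,p}_{X,0}(\Omega)$ is a closed subspace of the reflexive Banach space $W^{1,p}_X(\Omega)$ (Proposition \ref{psaw}), hence reflexive, a further subsequence satisfies $u_h-\varphi\rightharpoonup w$ weakly in $W^{1,p}_{X,0}(\Omega)$; this forces $u_h-\varphi\rightharpoonup w$ weakly in $L^p(\Omega)$ as well, and comparison with $u_h-\varphi\to u-\varphi$ strongly in $L^p(\Omega)$ gives $w=u-\varphi$. In particular $u\in W^{1,p}_{X,\varphi}(\Omega)$ and $Xu_h\rightharpoonup Xu$ weakly in $L^p(\Omega)^m$, so by Lemma \ref{lscgeneralized}(ii) applied to the integrand $\eta\mapsto|\eta|^p\in I_{m,p}(\Omega,1,1,0,0)$ (equivalently, by weak lower semicontinuity of the $L^p$-norm) we obtain $\int_\Omega|Xu|^p\,dx\le\liminf_h\int_\Omega|Xu_h|^p\,dx=L$, that is $\Psi^\varphi(u)\le L$. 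Running this argument along an arbitrary subsequence of the original sequence yields $\Psi^\varphi(u)\le\liminf_h\Psi^\varphi(u_h)$.

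The only genuinely delicate point is the identification of the weak $W^{1,p}_{X,0}(\Omega)$-limit of $(u_h-\varphi)_h$ with $u-\varphi$ and, simultaneously, the conclusion $u\in W^{1,p}_{X,\varphi}(\Omega)$, so that $\Psi^\varphi(u)$ is finite and given by the integral rather than $+\infty$. This relies on $W^{1,p}_{X,0}(\Omega)$ being weakly closed in $W^{1,p}_X(\Omega)$ (automatic, being a closed subspace) together with uniqueness of limits in $L^p(\Omega)$; everything else is a routine application of the results already recorded in Sections \ref{sec2} and \ref{sec3}.
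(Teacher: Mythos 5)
Your argument is correct and follows essentially the same route as the paper's proof: boundedness of sublevel sets via the Poincar\'e inequality, compactness from Theorem \ref{immersion} (resp.\ reflexivity of $W^{1,p}_X(\Omega)$) for coercivity, and weak compactness plus weak lower semicontinuity of the integral, together with weak closedness of the affine constraint, for the lower semicontinuity. The only differences are cosmetic: you phrase everything in terms of $u-\varphi\in W^{1,p}_{X,0}(\Omega)$ and invoke Lemma \ref{lscgeneralized}(ii) (equivalently, weak lower semicontinuity of the $L^p$-norm), where the paper extracts weak limits in $W^{1,p}_X(\Omega)$ and cites \cite[Example 2.12]{DM}.
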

\begin{proof}
Fix $t\in\mathbb{R}$ and let $(u_h)_h\subset L^p(\Omega)$ be such that
\[
\Psi^\varphi(u_h)\leq t\quad\text{for any }h\in\mathbb{N}\,.
\]
Then, $(u_h)_h$ is bounded in $W^{1,p}_{X,\varphi}(\Omega)$ and, by the reflexivity of $W^{1,p}_X(\Omega)$, there exists $u\in W^{1,p}_X(\Omega)$ such that, up to subsequences,
\begin{align*}
    u_h\to u\quad\text{weakly in } W^{1,p}_X(\Omega)
\end{align*}
and so
\[
u_h-\varphi\to u-\varphi\quad\text{weakly in
}W^{1,p}_X(\Omega)\,.
\]
Since $u_h-\varphi\in W_{X,0}^{1,p}(\Omega)$, which is closed with respect to the weak convergence of $W^{1,p}_X(\Om)$, then $u\in W^{1,p}_{X,\varphi}(\Om)$ and, in virtue of Theorem \ref{immersion}, up to a further subsequence,
\begin{equation*}
u_h\to u\quad\text{strongly in }L^p(\Omega)\,.
\end{equation*}
Thus, the set $\{u\in L^p(\Omega)\,|\,\Psi^\varphi(u)\leq t\}$ is relatively compact in the strong topology of $L^p(\Omega)$, that is, $\Psi^\varphi$ is coercive.

Let now $(u_h)_h$ be such that \begin{equation}\label{conv1}
    u_h\to u\quad\text{strongly in }L^p(\Omega)
\end{equation}
and
\[
\lim_{h\to\infty}\Psi^\varphi(u_h)<\infty\,.
\]
Then, $(u_h)$ is bounded in $W^{1,p}_{X,\varphi}(\Omega)$ and, by the reflexivity of $W^{1,p}_X(\Omega)$, there exists $v\in W^{1,p}_X(\Omega)$ such that, up to subsequences,
\[
u_h\to v\quad\text{weakly in }W^{1,p}_X(\Omega)\,.
\]
By \eqref{conv1},
\[
u=v\quad\text{in }W^{1,p}_X(\Omega)
\]
and, since $\Psi^\varphi$ is lower semicontinuous in the weak topology of $W^{1,p}_X(\Omega)$ (it can be proved as in \cite[Example 2.12]{DM}), then
\[
\Psi^\varphi(u)\leq\lim_{h\to\infty}\Psi^\varphi(u_h)
\]
and the conclusion follows.
\end{proof}
We are now in the position to prove Theorem \ref{convminimiz}.
\begin{proof}[Proof of Theorem \ref{convminimiz}]
{\bf (i)} By \eqref{proc}, both functionals $\Xi^{\varphi}_h$ and $\Xi^{\varphi}$ attain their minima in $L^p(\Omega)$ and, by Theorem \ref{convboundary},
\[
(F_h+\mathbbm{1}_{\varphi})_h\quad\text{$\Gamma$-converges to}\quad F+\mathbbm{1}_{\varphi}
\]
in the strong topology of $L^p(\Omega)$.
Moreover, since $G$ is continuous in the strong topology of $L^p(\Om)$ (it is readily seen, proceeding exactly as in the proof of Theorem \ref{Corollary 3.1.2}), then 
\[
(\Xi^{\varphi}_h)_h\quad\text{$\Gamma$-converges to}\quad\Xi^{\varphi}
\]
in the strong topology of $L^p(\Om)$, in virtue of \cite[Proposition 6.21]{DM}.

Let $\Psi^\varphi$ be the functional defined in \eqref{Psiphi}. By \eqref{3.2}, \eqref{crescitag} and Lemma \ref{PoincareW1pvarphi}, there exist a positive constant $k_3$, depending on $c_0,d_0,k_1$, and a positive constant $k_4$, depending on $a_0,b_0,k_2$, such that
\begin{equation}\label{equic}
    \Xi^\varphi_h(u)\geq    k_3\Psi^\varphi(u)-k_4\quad\text{for any }u\in L^p(\Omega)\text{ for any }h\in\mathbb{N}\,,
\end{equation}
where $k_1,k_2$ are the constants in Lemma \ref{PoincareW1pvarphi}.
Then, by \cite[Proposition 7.7]{DM}, $(\Xi^{\varphi}_h)_h$ is equicoercive in the strong topology of $L^p(\Om)$ and, by \cite[Theorem 7.8]{DM}, $\Xi^{\varphi}$ is also coercive and \eqref{convergenceminima} follows.
\medskip

\noindent {\bf (ii)}
Let $(u_h)_h$ be a sequence of minimizers of $(\Xi^{\varphi}_h)_h$. Without loss of generality, we may assume $(u_h)_h\subset W^{1,p}_{X,\varphi}(\Om)$. As in \eqref{equic},
\begin{align*}
\infty>\Xi^{\varphi}_h(u_h)\geq k_3\|u_h\|_{W^{1,p}_X(\Omega)}^p-k_4\quad\text{for any }h\in\mathbb{N}\,,
\end{align*}
i.e., $(u_h)_h$ in bounded in $W^{1,p}_{X,\varphi}(\Om)$ and, arguing as in the proof of Lemma \ref{coerc}, there exists $\bar u\in W^{1,p}_{X,\varphi}(\Om)$ such that, up to subsequences, \eqref{uhtouLp} holds. Finally, by \cite[Corollary 7.20]{DM}, we get \eqref{barumin}.
\end{proof}

%%%%%%%%%%%%%%%%%%%%%%%%%%

\subsection{Convergence of momenta}

%%%%%%%%%%%%%%%%%%%%%%%%%%

We now deal with the convergence of the momenta associated with functionals $(F_h)_h$. The results contained in this section are inspired by \cite{ADMZ2} and they partially extend those results to integral functionals depending on vector fields.
\medskip

Let $f\in I_{m,p}(\Om,c_0,c_1,a_0,a_1)$, let $f(x,\cdot):\Rm\to\bbR$  %$\Rm\ni\eta\mapsto f(x,\eta)$
be of class $\ci^1(\Rm)$ for a.e. $x\in\Om$ and denote by $\nabla_\eta f(x,\eta)$ its gradient %of $f(x,\cdot):\Rm\to\bbR$ 
at $\eta\in\mathbb{R}^m$. % for a.e. $x\in\Om$.
%Then, since $f$ satisfies \eqref{ULC}, it is easy to see that

\noindent By Lemma \ref{lem4.5}, there exists a nonnegative constant $c_2$, depending only on $p$ and $c_1$, such that
\[
|\nabla_\eta f(x,\eta)|\le\,c_2\,(2\,|\eta|+(a_0(x)+a_1(x))^{1/p})^{p-1}\quad\text{for a.e. }x\in\Om,\,\forall\eta\in\Rm.
\]
Then, functional $\mathcal F$ in \eqref{calF} is of class $\ci^1(L^p(\Omega)^m)$ and its Gateaux derivative $\partial_\Phi \mathcal F:\,L^p(\Om)^m\to L^{p'}(\Om)^m$ is given by
\begin{equation}\label{GderF}
\partial_\Phi \mathcal F(\Phi)(x)=\nabla_\eta f(x,\Phi(x))\quad\text{for a.e. }x\in\Om\,.
\end{equation}

We are now in position to prove Theorem \ref{convmom}. The proof relies on a technique given in \cite[Lemma 4.11]{DMFT} and \cite[Theorem 4.5]{ADMZ2}.
\begin{proof}[Proof of Theorem \ref{convmom}] 
To conclude, it is sufficient to show that
\begin{equation}\label{thesismom}
\langle \partial_\Phi \mathcal F(Xu),\Psi\rangle_{L^{p'}\times L^p}\le\,\liminf_{h\to\infty}\langle \partial_\Phi \mathcal F_h(Xu_h),\Psi\rangle_{L^{p'}\times L^p}
\end{equation}
for every $\Psi\in L^p(\Om)^m$. 

By (iv), there exist $(u_h)_h$ converging to $u$ in $L^p(\Omega)$ such that
\begin{equation}\label{recovery}
    \mathcal F(Xu)=F(u)=\lim_{h\to\infty}F_h(u_h)=\lim_{h\to\infty}\mathcal F_h(Xu_h)\,.
\end{equation}
If $\Phi\in L^p(\Omega)^m$, then, by Theorem \ref{pertfunct} and \cite[Proposition 8.1]{DM},
\begin{equation}\label{recovery2}
    \mathcal F(Xu+\Phi)=G^\Phi(u)\leq\liminf_{h\to\infty}G_h^\Phi(u_h)=\liminf_{h\to\infty}\mathcal F_h(Xu_h+\Phi)\,.
\end{equation}
Let $\Psi\in L^p(\Omega)^m$ and $(t_j)_j$ be a sequence of positive numbers converging to $0$, as $j\to\infty$. Then, $t_j\,\Psi\in L^p(\Omega)^m$ and, by \eqref{recovery} and \eqref{recovery2}
\[
\frac{\mathcal F\left(Xu+t_j\,\Psi\right)-\mathcal F\left(Xu\right)}{t_j}\le\,\liminf_{h\to\infty}\frac{\mathcal F_h\left(Xu_h+t_j\,\Psi\right)-\mathcal F_h\left(Xu_h\right)}{t_j}
\]
for every $j\in\mathbb{N}$.
Therefore, there exists an increasing sequence of integers $h_j$ such that
\begin{equation}\label{convmom2}
\frac{\mathcal F\left(Xu+t_j\,\Psi\right)-\mathcal F\left(Xu\right)}{t_j}-\frac{1}{j}\le\,\frac{\mathcal F_h\left(Xu_h+t_j\,\Psi\right)-\mathcal F_h\left(Xu_h\right)}{t_j}
\end{equation}
for every $h\ge\,h_j$.
If $\eps_h:=\,t_j$ for $h_j\le\,h<\,h_{j+1}$ then, by \eqref{convmom2},
\begin{equation}\label{mom1}
    \liminf_{h\to\infty}\frac{\mathcal F\left(Xu+\eps_h\,\Psi\right)-\mathcal F\left(Xu\right)}{\eps_h}\le\,\liminf_{h\to\infty}\frac{\mathcal F_h\left(Xu_h+\eps_h\,\Psi\right)-\mathcal F_h\left(Xu_h\right)}{\eps_h}\,.
\end{equation}

By (ii) and (iii), both $\mathcal F_h$ and $\mathcal F$ are of class $\ci^1(L^p(\Omega)^m)$. Therefore,
\begin{equation}\label{mom2}
    \langle \partial_\Phi \mathcal F(Xu),\Psi\rangle_{L^{p'}\times L^p}=\,\lim_{h\to\infty}\frac{\mathcal F\left(Xu+\eps_h\,\Psi\right)-\mathcal F\left(Xu\right)}{\eps_h}
\end{equation}
and, by mean value theorem, there exists $\tau_h\in (0,\eps_h)$ such that
\begin{equation}\label{mom3}
    \frac{\mathcal F_h\left(Xu_h+\eps_h\,\Psi\right)-\mathcal F_h\left(Xu_h\right)}{\eps_h}=\,\langle \partial_\Phi \mathcal F_h(Xu_h+\tau_h\Psi),\Psi\rangle_{L^{p'}\times L^p}\,.
\end{equation}
Finally, by (i) and \cite[Lemma 4.4]{ADMZ2}, with $H_h=\,\nabla_\eta f_h$, $\Phi_h=\,Xu_h$, $\Psi_h=\,\tau_h\,\Psi$ and $\Phi\equiv 1$, it holds that
\[
\liminf_{h\to\infty}\langle \partial_\Phi \mathcal F_h(Xu_h+\tau_h\Psi),\Psi\rangle_{L^{p'}\times L^p}=\,\liminf_{h\to\infty}\langle \partial_\Phi \mathcal F_h(Xu_h),\Psi\rangle_{L^{p'}\times L^p}
\]
and \eqref{thesismom} follows from \eqref{mom1}, \eqref{mom2} and \eqref{mom3}.
\end{proof}
\begin{oss} We are aware that assumption (iii) in Theorem \ref{convmom}, on integrand function $f(x,\cdot)$ to be $\ci^1(\Rm)$, is quite strong. Since the techniques exploited in the Euclidean setting do not seem to work in our framework, we do not know whether  assumption (ii) actually  implies it (see \cite[Proposition 3.5]{GP} and \cite[Theorem 2.8]{ADMZ2}). However, we will be able to prove this result in two relevant cases: the periodic homogenization in Carnot groups and the case of quadratic forms (see sections \ref{sec5} and \ref{sec6} below).
\end{oss}
\begin{proof}[Proof of Corollary \ref{corconvmom}]
Let $(u_h)_h$ be a sequence of minimizers of $(\Xi_h^\varphi)_h$. By Theorem \ref{convminimiz}, there exists a minimizer $u$ of $\Xi^{\varphi}$ such that, up to subsequences,
\begin{equation*}
u_h\to u\text{ weakly in } W^{1,p}_{X}(\Omega)\text{  and strongly in }L^p(\Om)
\end{equation*}
and
\begin{equation}\label{corconvmom2}
\Xi_h^\varphi(u_h)\to \Xi^\varphi(u)\,.
\end{equation}
Since $G$ is continuous in the strong topology of $L^p(\Om)$, then
\begin{equation*}
G(u_h)\to G(u)
\end{equation*}
and, by \eqref{corconvmom2},
\begin{equation}\label{corconvmom4}
\mathcal F_h(Xu_h)=\,F_h(u_h)\to F(u)=\mathcal F(Xu)\,.
\end{equation}
Then, the thesis follows by Theorem \ref{convmom}.
\end{proof}

%%%%%%%%%%%%%%%%%%%%%%%%%%%%%%%%%%%%%%%%%%%%%

\section{Convergence of minimizers and momenta in the case of homogenization in Carnot groups}\label{sec5}

%%%%%%%%%%%%%%%%%%%%%%%%%%%%%%%%%%%%%%%%%%%%%

We are going to study  the convergence of momenta  for  a $\Gamma$-convergent sequence of functionals  in the case of the periodic {\it homogenization} in Carnot groups. The asymptotic behaviour for the periodic homogenization of sequences of  functionals and differential operators  in Carnot groups has been the object of an intensive study in the last two decades. Here, for the sake of simplicity, we will restrict to the case of periodic homogenization in the setting of Heisenberg groups, which turn out to be the simplest Carnot groups. 

Let us recall that the $s$-dimensional Heisenberg group $\bbH^s:=\,(\bbR^{n},\cdot)$, with $n=2s+1=m+1$ and $s\ge\,1$  integer, is a Lie group with respect to the group law
\[
x\cdot y:=\,\left(x_1+y_1,\dots,x_{m}+y_{m}, x_{m+1}+y_{m+1}+\omega(x,y)\right)
\]
where
\[
\omega(x,y):=\,\frac{1}{2}\sum_{i=1}^s(x_i\,y_{s+i}-y_i\,x_{s+i})
\]
if $x=(x_1,\dots,x_{m+1})$ and  $y=(y_1,\dots,y_{m+1})$ are in $\Rn$. Moreover,  $\bbH^s$ can be equipped with a one-parameter family of {\it intrinsic dilations} $\delta_\lambda:\,\Rn\to\Rn$ ($\lambda>\,0$),  defined as 
\[
\delta_\lambda(x):=\,(\lambda\,x_1, \dots, \lambda\,x_m, \lambda^2\,x_{m+1}) \text{ if }x=(x_1,\dots,x_m,x_{m+1})\,,
\]
which are also automorphisms of the group.
A standard basis of the Lie algebra associated to $\bbH^s$ is given by the following family of $n$ left-invariant vector fields
\[
X_j:=\begin{cases}
\partial_j-\frac{x_{s+j}}{2}\partial_n&\text{ if }1\le\,j\le\,s\\
\partial_j+\frac{x_{j-s}}{2}\partial_n&\text{ if }s+1\le\,j\le\,m=2s\\
\partial_n&\text{ if }j=\,n=m+1
\end{cases}
\,.
\]
Notice that the only nontrivial commutations among vector fields $X_j$'s are given by
\begin{equation}\label{commHeis}
[X_j,X_{s+j}]=\,X_n\text{ for each }j=1,\dots,s\,.
\end{equation}
The family of vector fields
\begin{equation}\label{horgradheis}
X:=\,(X_1,\dots,X_m)
\end{equation}
is called {\it horizontal gradient} of the Heisenberg group $\bbH^s$. It is immediate to see that $X$ satisfies (LIC) condition and, by \eqref{commHeis}, H\"ormander condition. Thus, by Remark \ref{exvf} (i), the horizontal gradient $X$ satisfies assumptions (H1), (H2) and (H3).

Since $\bbH^s$ is a homogeneous Lie group, it is possible to introduce two natural notions of intrinsic periodicity and linearity, with respect to its algebraic structure (see \cite[Definition 2.3]{DDMM}). 
\begin{defi}
A function $g:\,\bbR^n\to\bbR$ is said to be {\it $H$-periodic} whenever
\begin{equation}\label{Hper}
g(2k\cdot x)=\,g(x)\quad\text{for any }x\in\bbR^n\text{ for any }k\in\bbZ^n.
\end{equation}
Moreover, a function $l:\,\Rn\to\bbR$ is said to be {\it $H$-linear} if 
\begin{equation*}
    l(x\cdot y)=\,l(x)+\,l(y)\quad\text{and}\quad l(\delta_{\lambda} (x))=\,\lambda\,l(x)
\end{equation*}
for each $x,\,y\in\Rn$ and $\lambda>\,0$. 
\end{defi}
It is well-known that each $H$-linear function $l:\,\Rn\to\bbR$ can be represented by a unique $\eta\in\Rm$ in such a way
\begin{equation}\label{hlinear}
    l(x)=\,l_\eta(x):=\,\langle \eta,\pi_m(x)\rangle
\end{equation}
where $\langle\cdot,\cdot\rangle$ and $\pi_m:\Rn\to\Rm$ denote, respectively, the scalar product on $\Rm$ and the projection map
\[
\pi_m(x):=\,(x_1,\dots,x_m)\quad\text{if } x=(x_1,\dots,x_{m+1})\in\Rn.
\]
\begin{defi}\label{affine}
A function $u:\,\Rn\to\bbR$ is said to be {\it $H$-affine} if there exist $\eta\in\Rm$ and $a\in\bbR$ such that
\begin{equation*}
u(x)=\,l_\eta(x)+a\quad\text{for each }x\in\Rn.
\end{equation*}
\end{defi}
Let $X=(X_1,\dots,X_m)$ denote the horizontal gradient \eqref{horgradheis} and fix $\eta\in\Rm$. 
One can prove that, for any $H$-affine function $u$
\begin{equation}\label{affinefunction}
Xu(x) =\,\eta\quad\text{if and only if}\quad u(x)=\,l_\eta(x)+a
\end{equation}
for some $a\in\bbR$ and for any $x\in\mathbb{R}^n$ (see e.g. \cite[Lemma 3.1]{DDMM}).

Let us recall a $\Gamma$-convergence result for the periodic homogenization of a sequence of integral functionals in $\bbH^s$.
\begin{teo}[{\cite[Theorem 5.2]{DDMM}}]\label{resultDDMM}
Let $\mA_0$ be the class of all open bounded subsets of $\mathbb{R}^n$, $1<p<\infty$ and $f\in I_{m,p}(\bbR^{n},c_0,c_1,0,1)$ satisfy
\begin{equation}\label{Hperx}
f(\cdot,\eta):\,\Rn\to[0,\infty)\text{ is $H$-periodic for every }\eta\in\Rm.
\end{equation} 
Let $F_\eps:\,L^p_{\rm{loc}}(\mathbb{R}^n)\times\mA_0\to[0,\infty]$ be the local functional defined as 
\begin{equation}\label{defFeps}
F_\eps(u,A):=
\displaystyle{\begin{cases}
\int_{A}f(\delta_{1/\eps}(x),Xu(x))\, dx&\text{ if }A\in\mA_0,\,u\in W^{1,p}_X(A)\\
\infty&\text{ otherwise}
\end{cases}
\,.
}
\end{equation}
Then, there exist a local functional $F_0:\,L^p_{\rm{loc}}(\mathbb{R}^n)\times\mA_0\to[0,\infty]$ and a convex function $f_0:\mathbb{R}^m\to[0,\infty)$, not depending on $x$ and satisfying
\[
c_0|\eta|^p\leq f_0(\eta)\leq c_1|\eta|^p+1\quad\text{for any }\eta\in\mathbb{R}^m,
\]
such that, up to subsequences,
\begin{equation}\label{FepshGammaconvF0}
F_0(\cdot,A)=\,\Gamma(L^p_{\rm{loc}}(\Rn))\text{-}\lim_{h\to \infty}F_{\eps_h}(\cdot,A)\text{ for each }A\in\mA_0
\end{equation}
for any infinitesimal sequence $(\eps_h)_h$, and $F_0$ can be represented as
\begin{equation}\label{defF0}
F_0(u,A):=
\displaystyle{\begin{cases}
\int_{A}f_0(Xu(x))\,dx&\text{ if }A\in\mA_0,\,u\in W^{1,p}_X(A)\\
\infty&\text{ otherwise}
\end{cases}
\,.
}
\end{equation}
\end{teo}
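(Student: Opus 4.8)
The plan is the classical three-step homogenization argument, adapted to the anisotropic dilations of $\bbH^s$. \emph{Step 1 (abstract compactness).} Writing $f_\eps(x,\eta):=f(\delta_{1/\eps}(x),\eta)$, one has $F_\eps(\cdot,A)=\int_A f_\eps(x,Xu)\,dx$ and $f_\eps\in I_{m,p}(\Rn,c_0,c_1,0,1)$. Since the horizontal gradient of $\bbH^s$ satisfies (LIC), applying Theorem \ref{mainthm} on an exhausting sequence of balls and diagonalizing yields, for a given infinitesimal $(\eps_h)_h$ and up to a subsequence, a local functional $F_0$ and $f_0\in I_{m,p}(\Rn,c_0,c_1,0,1)$ with $F_0(\cdot,A)=\Gamma(L^p_{\mathrm{loc}}(\Rn))\text{-}\lim_h F_{\eps_h}(\cdot,A)$ for every $A\in\mA_0$ and $F_0(u,A)=\int_A f_0(x,Xu)\,dx$ on $W^{1,p}_X(A)$. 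Thus $f_0(x,\cdot)$ is automatically convex with $c_0|\eta|^p\le f_0(x,\eta)\le c_1|\eta|^p+1$, and it only remains to show that $f_0$ does not depend on $x$ — and, along the way, that it does not depend on the chosen subsequence — by identifying it through a cell formula.

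\emph{Step 2 (cell formula).} For $\eta\in\Rm$, $z\in\Rn$ and $\rho>0$ set
\[
g_z(\rho):=\frac{1}{|B_d(z,\rho)|}\min\Big\{\int_{B_d(z,\rho)}f\big(y,\eta+X\varphi(y)\big)\,dy\ :\ \varphi\in W^{1,p}_{X,0}(B_d(z,\rho))\Big\}.
\]
One proves that $f_{\mathrm{hom}}(\eta):=\lim_{\rho\to\infty}g_0(\rho)$ exists and that $g_z(\rho)-g_0(\rho)\to0$ as $\rho\to\infty$, uniformly for $z$ in a bounded set. Both statements follow from an almost-subadditivity estimate: $B_d(0,\rho)=\delta_\rho(B_d(0,1))$ is covered by finitely many left translates (by elements of $2\bbZ^n$) of copies of a fixed cell $B_d(0,\rho_0)$, with bounded overlap by the doubling property (H2); the corresponding near-optimal correctors are transported by $H$-periodicity, glued with Lipschitz cut-offs, and the extra energy on the overlaps and on the outer annulus is controlled by the $p$-growth of $f$ and the Poincar\'e inequality for $W^{1,p}_{X,0}$ on metric balls. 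Finally, because the lattice $2\bbZ^n$ has a bounded fundamental domain for the left action even after the symplectic twist — one matches the first $m$ coordinates of $z$ and then, triangularly, the last one — every $z\in\Rn$ can be brought into a fixed bounded set by a left translation by some $2k\in2\bbZ^n$, and then $g_z(\rho)=g_{(2k)^{-1}\cdot z}(\rho)$ by $H$-periodicity.

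\emph{Step 3 (identification).} The proof of Theorem \ref{mainthm} characterizes $f_0$ by the blow-up formula: for a.e.\ $x_0$,
\[
f_0(x_0,\eta)=\lim_{r\to0^+}\lim_{h\to\infty}\frac{1}{|B_d(x_0,r)|}\min\Big\{F_{\eps_h}(v,B_d(x_0,r))\ :\ v-l_\eta\in W^{1,p}_{X,0}(B_d(x_0,r))\Big\},
\]
where $l_\eta$ is the $H$-linear function with $Xl_\eta\equiv\eta$ (see \eqref{hlinear}--\eqref{affinefunction}); here one uses Theorem \ref{convboundary} together with equicoercivity — from $c_0|\eta|^p\le f_h$ and the Poincar\'e inequality, available since the Heisenberg horizontal gradient satisfies (H1)--(H3) — to pass minima to the $\Gamma$-limit, and the fact that the Heisenberg vector fields are divergence-free, so that $\int_{B_d(x_0,r)}X(v-l_\eta)=0$ and the homogenized density is recovered at $H$-affine functions. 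The substitution $v-l_\eta=\eps_h\,\varphi\circ\delta_{1/\eps_h}$, together with $\delta_{1/\eps_h}(B_d(x_0,r))=B_d(\delta_{1/\eps_h}(x_0),r/\eps_h)$, the identity $|B_d(z,s)|=s^{\nu}|B_d(0,1)|$ with $\nu:=m+2$ the homogeneous dimension, and the left-invariance of Lebesgue measure, turns the inner minimum divided by $|B_d(x_0,r)|$ into exactly $g_{\delta_{1/\eps_h}(x_0)}(r/\eps_h)$. By Step 2 this equals $g_{b_h}(r/\eps_h)$ with $b_h$ in a fixed bounded set, hence converges to $f_{\mathrm{hom}}(\eta)$ as $h\to\infty$, for every $r>0$; so $f_0(x_0,\eta)=f_{\mathrm{hom}}(\eta)$ for a.e.\ $x_0$, independently of $x_0$ and of the subsequence. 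This proves the theorem, with convexity and the growth bounds of $f_{\mathrm{hom}}$ inherited from Step 1.

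\emph{Main obstacle.} The genuinely delicate point is Step 2: in the Euclidean case the almost-subadditivity of the cell problem is routine, but here $B_d(0,\rho)=\delta_\rho(B_d(0,1))$ is not a box of side $\rho$, so the tiling by period cells, the gluing of correctors and the boundary-layer energy estimates must be carried out in the metric-measure setting, relying on the doubling condition, the $W^{1,p}_{X,0}$-Poincar\'e inequality and the $p$-growth of $f$; and the combinatorial reduction of an arbitrary center $\delta_{1/\eps}(x_0)$ to a bounded one modulo $2\bbZ^n$, needed in Step 3, is precisely where the non-commutativity of the group law (the symplectic term $\omega$) has to be handled with care.
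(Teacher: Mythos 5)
A preliminary remark: the paper you are working from does not prove this statement at all — Theorem \ref{resultDDMM} is quoted from \cite[Theorem 5.2]{DDMM} and used as a black box — so your proposal has to be judged on its own merits rather than against an internal proof. Your overall architecture (subsequential $\Gamma$-compactness via Theorem \ref{mainthm} on an exhaustion, an asymptotic cell formula, identification by blow-up using the covariance of $X$, $d$ and Lebesgue measure under dilations and left translations, and the observation that $2\bbZ^n$ is a genuine subgroup of $\bbH^s$ with a bounded fundamental domain) is the natural one, and your scaling computation in Step 3 and the reduction of centers modulo $2\bbZ^n$ are correct. However, there are two genuine gaps, and they sit exactly at the heart of the theorem.

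First, Step 2 as described does not work. Covering $B_d(0,\rho)$ by boundedly overlapping left translates of a \emph{fixed} ball $B_d(0,\rho_0)$ and gluing near-optimal correctors with cut-offs cannot give an asymptotically sharp subadditivity estimate: metric balls do not tile, so the union of the overlaps has measure comparable to $|B_d(0,\rho)|$ rather than $o(\rho^{\nu})$, and on each overlap the cut-off term $|X\theta|^p|\varphi_i-\varphi_j|^p$ is, after the Poincar\'e inequality at scale $\rho_0$, of the same order as the corrector energy itself. What you obtain is an inequality of the form $g_0(\rho)\le C\,g_0(\rho_0)+\dots$ with a multiplicative constant $C>1$ coming from the overlap multiplicity, which yields neither the existence of $\lim_{\rho\to\infty}g_0(\rho)$ nor the uniform closeness $g_z(\rho)-g_0(\rho)\to0$. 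The standard remedy is to tile disjointly, up to a boundary layer of measure $o(\rho^{\nu})$, by lattice left-translates of a fundamental domain such as $[0,2)^n$, paste the zero-extended correctors with no cut-offs (they belong to $W^{1,p}_{X,0}$ of each tile), absorb the layer via the growth condition, and then compare the box-based cell quantity with the ball-based quantity that your Step 3 produces. You yourself flag Step 2 as the ``main obstacle'', but the mechanism you propose for it is the flawed one, so the crux of the proof is missing.

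Second, Step 3 appeals to a derivation (blow-up) formula allegedly contained in ``the proof of Theorem \ref{mainthm}''. That theorem only asserts the existence of a representation $F_0(u,A)=\int_A f_0(x,Xu)\,dx$; neither it nor anything else in the paper gives $f_0(x_0,\eta)$ as the limit of normalized minima on metric balls with boundary datum $l_\eta$. The formula is provable, and your parenthetical remarks contain the right ingredients — Theorem \ref{convboundary} together with equicoercivity (Poincar\'e) to pass minima to the $\Gamma$-limit; the upper bound by testing with $l_\eta$ at Lebesgue points; and for the lower bound a measurable subgradient selection $\sigma(x)\in\partial f_0(x,\cdot)(\eta)$ combined with $\int_B X(v-l_\eta)\,dx=0$ (divergence-free fields) and an $L^{p'}$-Lebesgue-point estimate — but this is an additional lemma that must be proved, not a citation. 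Incidentally, for the $x$-independence alone there is a shortcut used in the literature: since $\delta_{1/\eps}(\delta_\eps(2k)\cdot x)=2k\cdot\delta_{1/\eps}(x)$ and $f$ is $H$-periodic, $F_\eps$ is exactly invariant under left translation by $\delta_\eps(2k)$, and $\delta_\eps(2\bbZ^n)$ becomes dense as $\eps\to0$; passing this invariance to the $\Gamma$-limit forces $f_0$ to be independent of $x$, so the cell formula is only needed to identify the limit independently of the subsequence.
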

We are going to prove the following regularity result for the integrand function $f_0$ which represents the $\Gamma$-limit. It is an extension to functionals depending on vector fields of \cite[Proposition 3.5]{GP}, when $X=\,D$, and of \cite[Theorem 2.8]{ADMZ2}, which applies to a more general setting.
\begin{prop}\label{GammalimitC1} Under the hypotheses of Theorem \ref{resultDDMM}, suppose that
\begin{equation}\label{fC1}
    \Rm\ni\eta\mapsto f(x,\eta)\text{ belongs to }\ci^1(\Rm)\text{ for a.e. }x\in\Om
\end{equation}
and, fixed $0\leq\alpha\leq\min\{1,p-1\}$, that there exists a positive constant $\overline{c}$ such that
\begin{equation}\label{estimgrad}
    |\nabla_\eta f(x,\eta_1)-\nabla_\eta f(x,\eta_2)|\le\,\overline{c}\,|\eta_1-\eta_2|^{\alpha}\,\left(|\eta_1|+|\eta_2|+1\right)^{p-1-\alpha}
\end{equation}
a.e. $x\in\Omega$, for any $\eta_1,\,\eta_2\in\Rm$. Then, $f_0\in\ci^1(\Rm)$.
\end{prop}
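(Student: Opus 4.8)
The plan is to reduce matters to the differentiability of $f_0$ at every point, and then to obtain that differentiability exactly as in the proof of Theorem~\ref{convmom} --- from the $\Gamma$-$\liminf$ inequality together with the uniform gradient estimate \eqref{estimgrad} --- the one new ingredient being a symmetry argument that replaces hypothesis (iii) of that theorem. By Theorem~\ref{resultDDMM} the function $f_0\colon\Rm\to[0,\infty)$ is finite and convex, and a finite convex function on $\Rm$ that is differentiable at every point is of class $\ci^1$, its gradient being continuous (a classical fact, see e.g.\ \cite[Section 25]{Ro}). Hence it suffices to show that, for every $\eta\in\Rm$, the one-sided directional derivative
\[
f_0'(\eta;\psi):=\lim_{t\to0^+}\frac{f_0(\eta+t\psi)-f_0(\eta)}{t}
\]
is linear in $\psi$. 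Since $\psi\mapsto f_0'(\eta;\psi)$ is always positively homogeneous and subadditive, and an odd positively homogeneous subadditive map is automatically linear, while subadditivity already gives $f_0'(\eta;\psi)+f_0'(\eta;-\psi)\ge f_0'(\eta;0)=0$, it is enough to prove the reverse inequality $f_0'(\eta;\psi)+f_0'(\eta;-\psi)\le0$ for every $\psi\in\Rm$.

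\textbf{Setup and the difference-quotient estimate.} Fix $\eta\in\Rm$ and a bounded open set $A$ with $|A|>0$, and keep the notation of Theorem~\ref{resultDDMM}: $F_h:=F_{\eps_h}$ with $f_h(x,\cdot):=f(\delta_{1/\eps_h}(x),\cdot)\in I_{m,p}(\Rn,c_0,c_1,0,1)$, and $F_0$ the $\Gamma$-limit with integrand $f_0$. Let $l_\eta(x):=\langle\eta,\pi_m(x)\rangle$, so that $Xl_\eta\equiv\eta$ by \eqref{affinefunction} and $F_0(l_\eta,A)=f_0(\eta)\,|A|$. Choose a recovery sequence $(u_h)_h$ for $l_\eta$ relative to $F_h\to F_0$ on $A$; it depends only on $\eta$ and $A$, satisfies $u_h\to l_\eta$ in $L^p_{\rm loc}(\Rn)$ and $\int_A f_h(x,Xu_h)\,dx\to f_0(\eta)\,|A|$, and the lower bound in \eqref{3.2} (with $a_0\equiv0$) forces $(Xu_h)_h$ to be bounded in $L^p(A)^m$. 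For $t>0$ and $\psi\in\Rm$ the function $u_h+l_{t\psi}$ has $X(u_h+l_{t\psi})=Xu_h+t\psi$ and converges to $l_{\eta+t\psi}=l_\eta+l_{t\psi}$ in $L^p_{\rm loc}(\Rn)$, so the $\Gamma$-$\liminf$ inequality gives
\[
f_0(\eta+t\psi)\,|A|=F_0(l_{\eta+t\psi},A)\le\liminf_{h\to\infty}\int_A f_h(x,Xu_h+t\psi)\,dx .
\]
Subtracting $\int_A f_h(x,Xu_h)\,dx\to f_0(\eta)\,|A|$ and dividing by $t$ yields, for every $t>0$,
\[
\frac{f_0(\eta+t\psi)-f_0(\eta)}{t}\,|A|\le\liminf_{h\to\infty}\frac1t\int_A\bigl(f_h(x,Xu_h+t\psi)-f_h(x,Xu_h)\bigr)\,dx .
\]

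\textbf{Diagonalization and symmetry.} Next I would diagonalize exactly as in the proof of Theorem~\ref{convmom}: pick $t_j\downarrow0$ and increasing indices $h_j$ so that the previous inequality holds up to an error $1/j$ for all $h\ge h_j$, and set $s_h:=t_j$ for $h_j\le h<h_{j+1}$; using that $t\mapsto t^{-1}(f_0(\eta+t\psi)-f_0(\eta))$ decreases to $f_0'(\eta;\psi)$ as $t\downarrow0$, one obtains an infinitesimal sequence $(s_h)_h$ with
\[
f_0'(\eta;\psi)\,|A|\le\liminf_{h\to\infty}\frac1{s_h}\int_A\bigl(f_h(x,Xu_h+s_h\psi)-f_h(x,Xu_h)\bigr)\,dx .
\]
By \eqref{fC1} each $f_h(x,\cdot)$ is $\ci^1$ for a.e.\ $x$ and (by Lemma~\ref{lem4.5} and \eqref{3.2}) $\nabla_\eta f_h$ has $(p-1)$-growth, so the map $s\mapsto\int_A f_h(x,Xu_h+s\psi)\,dx$ is $\ci^1$ and the mean value theorem rewrites the right-hand side as $\liminf_{h\to\infty}\int_A\langle\nabla_\eta f_h(x,Xu_h+\tau_h\psi),\psi\rangle\,dx$ with $\tau_h\in(0,s_h)$. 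Then $\tau_h\psi\to0$ in $L^p(A)^m$, $(Xu_h)_h$ is bounded in $L^p(A)^m$, and $(\nabla_\eta f_h)_h$ inherits from $f$ the estimate \eqref{estimgrad} with one and the same constant (the dilation $\delta_{1/\eps_h}$ acts only on the $x$-variable), so \cite[Lemma 4.4]{ADMZ2} applies and gives
\[
f_0'(\eta;\psi)\,|A|\le\liminf_{h\to\infty}\int_A\langle\nabla_\eta f_h(x,Xu_h),\psi\rangle\,dx .
\]
Since $(u_h)_h$ does not depend on $\psi$, this inequality holds for both $\psi$ and $-\psi$; adding the two and using $\liminf(-a_h)=-\limsup a_h$ we get $\bigl(f_0'(\eta;\psi)+f_0'(\eta;-\psi)\bigr)|A|\le0$, hence $f_0'(\eta;\psi)+f_0'(\eta;-\psi)=0$ for every $\psi$, i.e.\ $f_0$ is differentiable at $\eta$. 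As $\eta\in\Rm$ was arbitrary, $f_0$ is differentiable everywhere, and therefore $f_0\in\ci^1(\Rm)$.

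\textbf{Main obstacle.} The only genuinely delicate point is the coupled passage to the limit in $t$ and $h$ behind the choice of $(s_h)_h$: it must simultaneously make the difference quotient of $f_0$ converge to $f_0'(\eta;\psi)$ and keep the perturbation $\tau_h\psi$ infinitesimal in $L^p$, so that \cite[Lemma 4.4]{ADMZ2} can be invoked. This, together with the (routine) verification that \eqref{estimgrad} transfers to the whole family $(f_h)_h$ with a single constant, is handled exactly as in the proof of Theorem~\ref{convmom}; everything else is the elementary convex-analysis reduction of the first step and the symmetry trick of applying the estimate to $\psi$ and $-\psi$ along the same recovery sequence.
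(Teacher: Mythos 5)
Your proposal is correct and follows essentially the same route as the paper's proof: a recovery sequence for the $H$-affine function $l_\eta$, the $\Gamma$-$\liminf$ inequality applied to the perturbed competitors, the diagonalization in $t$ and $h$ borrowed from Theorem \ref{convmom}, and \cite[Lemma 4.4]{ADMZ2} to remove the perturbation in the momenta, concluding with the standard fact that a finite convex function on $\Rm$ differentiable everywhere is $\ci^1$. The only (harmless) deviations are at the finish: the paper uses the convexity (subgradient) inequality instead of the mean value theorem and identifies the weak limit $\psi$ of the averaged momenta, showing every subgradient of $f_0$ at $\eta$ equals $\psi$, whereas you avoid extracting that limit by applying the estimate to $\pm\psi$ along the same recovery sequence and using the $\liminf$/$\limsup$ cancellation to get oddness, hence linearity, of the directional derivative.
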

\begin{proof} We follow the same strategy of \cite[Proposition 3.5]{GP}.
\medskip

Let $f_h:\,\Rn\times\Rm\to [0,\infty)$ be the function
\begin{equation}\label{fhper}
    f_h(y,\eta):=\,f(\delta_{1/\eps_h}(y),\eta)\quad\text{for any }y\in\Rn,\eta\in\Rm,h\in\mathbb{N}
\end{equation}
and denote $B_1$ the unit ball in $\mathbb{R}^n$ centered at $0$. Fixed $\eta\in\mathbb{R}^m$, let $u:\mathbb{R}^n\to\mathbb{R}$ be the $H$-affine function \eqref{hlinear}, that is,
\[
u(y)=l_\eta(y)\quad\text{for any }y\in\mathbb{R}^n.
\]
If $(u_h)_h$ is a recovery sequence for $u$, then, by \eqref{FepshGammaconvF0} and \eqref{defF0},
\begin{equation}\label{uhtoleta}
    u_h\to l_\eta\quad\text{strongly in }L^p(B_1)
\end{equation}
and
\begin{equation}\label{FepshtoF0eta}
    F_0(l_\eta,B_1)=\,f_0(\eta)\,|B_1|=\lim_{h\to\infty}F_{\eps_h}(u_h,B_1)\,.
\end{equation}
By \eqref{FepshtoF0eta} and $(I_3)$, $(Xu_h)_h$ is bounded in $L^p(B_1)^m$ and, by \eqref{ULC},
\[
|\nabla_\eta f_h(y,Xu_h(y))|\le\, c_2
\left(2|Xu_h(y)|+1\right)^{p-1}
\]
for each $h\in\mathbb{N}$ and for a.e. $y\in B_1$. Then, $(\nabla_\eta f_{h}(\cdot,Xu_h))_h$ is bounded in $L^{p'}(B_1)^m$ and there exists $\psi\in\Rm$ such that, up to subsequences,
\begin{equation}\label{GammalimitC11}
    \frac{1}{|B_1|}\int_{B_1}\nabla_\eta f_{h}(y,Xu_h(y))\,dy\to\psi\,.
\end{equation}
Let $(t_j)_j$ be a decreasing infinitesimal sequence. By the convexity of $f_{h}$ in the second variable,
\begin{equation}\label{convcarn}
\begin{split}
\int_{B_1}f_h(y,Xu_h(y)+t_j\zeta)&-f_{h}(y,Xu_h(y))\,dy\\
&\le\, t_j\,\int_{B_1}\langle\nabla_\eta f_{h}\left(y,Xu_h(y)+t_j \zeta\right),\zeta\rangle\,dy
\end{split}
\end{equation}
for any $j\in\bbN$ and $\zeta\in\bbR^m$.

By the $\Gamma\text{-}\liminf$ inequality and \eqref{FepshtoF0eta}, we can find $h_j\in\mathbb{N}$ such that
\[
\begin{split}
&\frac{f_0\left(\eta+t_j\zeta\right)-f_0\left(\eta\right) }{t_j}-\frac{1}{j}\le\,\frac{1}{|B_1|}\int_{B_1}\langle\nabla_\eta f_{{h_j}}\left(y,Xu_{h_j}(y)+t_j \zeta)\right),\zeta\rangle\,dy
\end{split}
\]
and so
\begin{equation}\label{GammalimitC12}
\begin{split}
\limsup_{j\to\infty}\,&\frac{f_0\left(\eta+t_j\zeta\right)-f_0\left(\eta\right)}{t_j}\\
&\le\,\frac{1}{|B_1|}\limsup_{j\to\infty}\int_{B_1}\langle\nabla_\eta f_{{h_j}}\left(y,Xu_{h_j}(y)+t_j \zeta\right),\zeta\rangle\,dy\,.
\end{split}
\end{equation}
By \eqref{estimgrad}, \eqref{GammalimitC11} and \cite[Lemma 4.4]{ADMZ2}, with $H_j:=\nabla_\eta f_{h_j}$, $\Phi_j:=Xu_{h_j}$, $\Psi_j:=\,t_j\,\zeta$ and $\Phi\equiv 1$, we can infer that
\[
\begin{split}
\lim_{j\to\infty}\int_{B_1}&\langle\nabla_\eta f_{{h_j}}\left(y,Xu_{h_j}(y)+t_j \zeta\right),\zeta\rangle\,dy\\&=\,\lim_{j\to\infty}\int_{B_1}\langle\nabla_\eta f_{{h_j}}\left(y,Xu_{h_j}(y)\right),\zeta\rangle\,dy=\,\langle \psi,\zeta\rangle\,|B_1|\,.
\end{split}
\]
By \eqref{GammalimitC12}, for every subgradient $v$ of the convex function $f_0$ at $\eta$
\[
\langle v,\zeta\rangle\le\,\limsup_{j\to\infty}\frac{f_0\left(\eta+t_j\zeta\right)-f_0\left(\eta\right) }{t_j}\le\,\langle \psi,\zeta\rangle\quad\text{for each }\zeta\in\Rm\,.
\]
Then, $v=\psi$ and there is a unique subgradient of $f_0$ at $\eta$. Therefore, $f_0$ is differentiable at $\eta$ for each $\eta\in\Rm$, by \cite[Theorem 25.1]{Ro}. On the other hand, by \cite[Corollary 25.5.1]{Ro}, any finite convex differentiable function on an open convex set must be of class $\ci^1$.
\end{proof}
\begin{cor}\label{corconvmomhomog} 
Let $\Omega\subset\Rn$ be a bounded and connected open set, let $1<p<\infty$ and let $X$ be the horizontal gradient defined in \eqref{horgradheis}.
Let $f\in I_{m,p}(\Omega,c_0,c_1,0,1)$ satisfy \eqref{Hperx}, \eqref{fC1} and \eqref{estimgrad}, let $f_h$ be as in \eqref{fhper}, let $g$ satisfy \eqref{crescitag} and \eqref{constg}, let $G$ be the functional in \eqref{Gdatum} and, with a little abuse of notation, denote 
\[
F_{\eps_h}(u):=F_{\eps_h}(u,\Omega)\quad\text{and}\quad F_0(u):=F_0(u,\Omega)\quad\text{for any }u\in L^p(\Omega)\,,
\]
where $F_{\eps_h}$ and $F_0$ are, respectively, as in \eqref{defFeps} and \eqref{defF0} such that \eqref{FepshGammaconvF0} holds for each infinitesimal sequence $(\eps_h)_h$.
Fixed $\varphi\in W^{1,p}_{X}(\Omega)$, consider functionals $\Xi_h^{\varphi},\Xi_0^{\varphi}:L^p(\Omega)\to\mathbb{R}\cup\{\infty\}$, defined by
\[
\Xi_h^{\varphi}:=F_{\eps_h}+G+\mathbbm{1}_{\varphi}\quad\text{and}\quad \Xi_0^{\varphi}:=F_0+G+\mathbbm{1}_{\varphi}\,.
\]
If $(u_h)_h\subset L^p(\Omega)$ is a sequence of minimizers of $(\Xi_h^{\varphi})_h$, then there exists a minimum $u$ of $\Xi^{\varphi}$ such that, up to subsequences,
\begin{equation*}
    u_h\to u\text{ weakly in }W^{1,p}_X(\Omega)\text{ and strongly in }L^p(\Om)\,.
\end{equation*}
Moreover, the convergence of momenta also holds, that is,
\begin{equation}\label{convmom0hom}
\nabla_\eta f_h(\cdot,Xu_h(\cdot))\to \nabla_\eta f_0(Xu(\cdot))
\end{equation}
weakly in $L^{p'}(\Om)^m$ as $h\to\infty$.
\end{cor}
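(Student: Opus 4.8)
The strategy is to reduce the statement to an application of Corollary~\ref{corconvmom}, after checking that all of its hypotheses hold in the present periodic setting. As recalled just above, the horizontal gradient $X$ in \eqref{horgradheis} satisfies the (LIC) condition and the H\"ormander condition, hence also conditions (H1), (H2) and (H3) by Remark~\ref{exvf}(i), while $\Om$ is open, bounded and connected with $1<p<\infty$. Since $f\in I_{m,p}(\Rn,c_0,c_1,0,1)$ and the intrinsic dilations $\delta_{1/\eps_h}$ are diffeomorphisms of $\Rn$ preserving Lebesgue-negligible sets, the rescaled integrands $f_h(y,\eta)=f(\delta_{1/\eps_h}(y),\eta)$ of \eqref{fhper} belong to $I_{m,p}(\Om,c_0,c_1,0,1)$ as well; and, by the convexity and the two-sided bound $c_0|\eta|^p\le f_0(\eta)\le c_1|\eta|^p+1$ granted by Theorem~\ref{resultDDMM}, also $f_0\in I_{m,p}(\Om,c_0,c_1,0,1)$. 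Finally, $g$ satisfies \eqref{crescitag} and \eqref{constg} and $G$ is the functional \eqref{Gdatum}, by hypothesis.

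Next I would verify that $F_{\eps_h}$, $F_0$ and the associated functionals $\mathcal F_h(\Phi):=\int_\Om f_h(x,\Phi(x))\,dx$ and $\mathcal F(\Phi):=\int_\Om f_0(\Phi(x))\,dx$ meet the assumptions of Theorem~\ref{convmom}. The $\Gamma$-convergence \eqref{GammalimitFhintro} follows from \eqref{FepshGammaconvF0}: since $\Om\in\mA_0$ and both $F_{\eps_h}(\cdot,\Om)$ and $F_0(\cdot,\Om)$ depend only on the restriction of their argument to the bounded open set $\Om$, extending functions of $L^p(\Om)$ by zero outside $\Om$ and recalling that $\overline\Om$ is compact shows that $(F_{\eps_h})_h$ $\Gamma$-converges to $F_0$ in the strong topology of $L^p(\Om)$. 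Assumption (ii) of Theorem~\ref{convmom} holds for each $f_h$, since $\eta\mapsto f(x,\eta)$ is of class $\ci^1(\Rm)$ for a.e. $x$ by \eqref{fC1} and $\delta_{1/\eps_h}$ preserves null sets; assumption (i) holds with $b\equiv 1\in L^p(\Om)$ (as $\Om$ is bounded), because \eqref{estimgrad} transfers verbatim to $f_h$ in view of the identity $\nabla_\eta f_h(y,\eta)=\nabla_\eta f(\delta_{1/\eps_h}(y),\eta)$; and assumption (iii), namely $f_0\in\ci^1(\Rm)$, is exactly the content of Proposition~\ref{GammalimitC1}, whose hypotheses \eqref{fC1} and \eqref{estimgrad} are precisely those assumed here.

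With all the hypotheses of Corollary~\ref{corconvmom} in force, applying it to $F_{\eps_h},\mathcal F_h,F_0,\mathcal F$ and to the given minimizing sequence $(u_h)_h$ of $(\Xi_h^\varphi)_h$ would yield, up to a subsequence, a minimizer $u$ of $\Xi_0^\varphi$ with $u_h\to u$ weakly in $W^{1,p}_X(\Om)$ and strongly in $L^p(\Om)$, together with the convergence of momenta
\[
\nabla_\eta f_h(\cdot,Xu_h(\cdot))=\partial_\Phi\mathcal F_h(Xu_h)\to\partial_\Phi\mathcal F(Xu)=\nabla_\eta f_0(Xu(\cdot))\qquad\text{weakly in }L^{p'}(\Om)^m,
\]
which is \eqref{convmom0hom}. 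The only non-formal ingredient is the validity of assumption (iii) of Theorem~\ref{convmom}, flagged as delicate in the remark following that theorem; here it is supplied by Proposition~\ref{GammalimitC1}, whereas everything else amounts to checking that the intrinsic dilations preserve the structural bounds, the $\ci^1$-regularity and the gradient estimate on $f$, and that the $\Gamma$-limit may be read equivalently in $L^p_{\mathrm{loc}}(\Rn)$ and in $L^p(\Om)$.
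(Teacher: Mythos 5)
Your proposal is correct and follows essentially the same route as the paper's own (much terser) proof: invoke Remark \ref{exvf}(i) for (LIC) and (H1)--(H3), read the $\Gamma$-convergence \eqref{FepshGammaconvF0} as $\Gamma$-convergence in $L^p(\Om)$, use Proposition \ref{GammalimitC1} to supply hypothesis (iii) of Theorem \ref{convmom}, and conclude via Corollary \ref{corconvmom}. The extra verifications you include (that the dilations preserve the class $I_{m,p}(\Om,c_0,c_1,0,1)$, the $\ci^1$-regularity and the gradient estimate, and the zero-extension argument linking $L^p_{\mathrm{loc}}(\Rn)$ to $L^p(\Om)$) are details the paper leaves implicit, and they are all sound.
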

\begin{proof} In virtue of Remark \ref{exvf} (i), by choosing $\Om_0=\,\Rn$, the horizontal gradient $X$ satisfies (H1), (H2) and (H3) as well as (LIC) condition.
By \eqref{FepshGammaconvF0}, $(F_{\eps_h})_h$ $\Gamma$-converges to $F_0$ in the strong topology of $L^p(\Omega)$.
Therefore, by Proposition \ref{GammalimitC1}, we can apply Corollary \ref{corconvmom} and we get the desired conclusions.
\end{proof}

%%%%%%%%%%%%%%%%%%%%%%%%%%%%%%%%%%

\section{\texorpdfstring{$H$}{TEXT}-convergence for linear operators in \texorpdfstring{$X$}{TEXT}--divergence form}\label{sec6}

%%%%%%%%%%%%%%%%%%%%%%%%%%

Throughout this section $X=\,(X_1,\dots,X_m)$ denotes a family of Lipschitz continuous vector fields on an open neighborhood $\Omega_0$ of $\Omega$, open subset of $\mathbb{R}^n$. Moreover, we denote by $H^{1}_{X,0}(\Omega)$ the space $W^{1,2}_{X,0}(\Omega)$ and by $H^{-1}_{X}(\Omega)$ its dual space.
Since $H^{1}_{X,0}(\Omega)$ turns out to be a Hilbert space, in a standard way, then we can construct the triplet
\[
H^{1}_{X,0}(\Omega)\subset L^2(\Om)\subset H^{-1}_{X}(\Omega)\,,
\]
with the space $L^2(\Om)$ as {\it pivot space}.

Let $a(x):=\,[a_{ij}(x)]$ be a $m\times m$ symmetric matrix, with $a_{ij}\in L^{\infty}(\Omega)$ for every $i,j\in \{1,\ldots, m\}$ and assume the existence of $c_0\leq c_1$, positive constants, such that
\begin{equation}\label{condJ1}
c_0|\eta|^2\leq \langle a(x)\eta,\eta\rangle \leq c_1|\eta|^2\quad\text{a.e. }x\in\Omega\text{ and for any }\eta\in\mathbb{R}^m.
\end{equation}
As a consequence of Corollary \ref{corconvmom}, we can infer a {\it $H$-compactness} result for the class of linear partial differential operators in $X$-divergence form  
\begin{equation}\label{operL}
\mathcal L=\,{\rm div}_X(a(x)X):=\sum_{j,i=1}^mX_j^T(a_{ij}(x) X_i)\,,
\end{equation}
whose domain $D(\mathcal{L})$ is the set of functions $u\in W^{1,2}_X(\Omega)$ such that the distribution defined by the right hand side belongs to $L^2(\Omega)$. Here, $X^T_j:=-(\mathrm{div}(X_j)+X_j)$ denotes the (formal) adjoint of $X_j$ in $L^2(\Omega)$, as in \eqref{XjT}. In according with \cite[Chapter 13]{DM}, we denote this class of operators as $\mathcal{E}(\Omega):=\mathcal{E}(\Omega,c_0,c_1)$.
\begin{oss}\label{equivsoluzmin}
For each $\mathcal L\in\mathcal{E}(\Om)$ and for each $\mu\ge\,0$ the linear operator 
\[
\mu\,{\rm Id}+\mathcal L:\,H^{1}_{X,0}(\Omega)\to H^{-1}_X(\Omega)
\]
is coercive and then it is an isomorphism. Moreover, it is well-known that, fixed $g\in L^2(\Om)$, $u=(\mu\,{\rm Id}+\mathcal L)^{-1}g$ is the (unique) solution to
\[\displaystyle{\begin{cases}
\mu v+\mathcal{L}(v)=g\ \text{in}\ \Omega\\
v\in H^1_{X,0}(\Omega)
\end{cases}
}
\]
and the (unique) minimizer of $F+G:\,H^{1}_{X,0}(\Omega)\to\bbR$, where
\[
F(v)=\,\frac{1}{2}\int_{\Omega}\langle a(x)Xv(x),Xv(x)\rangle\,dx\,,\quad G(v):=\int_{\Omega}\left(\frac{\mu}{2} v^2-gv\right)dx\,,
\]
if $v\in H^{1}_{X,0}(\Omega)$.
\end{oss}
\bigskip

\noindent We are going to prove the following $H$-compactness result for operators belonging to $\mathcal{E}(\Omega)$.
\begin{teo} \label{FDC} Let $\Omega\subset\mathbb{R}^n$ be a bounded open set, let $1<p<\infty$ and let $X$ satisfy (LIC) condition and conditions (H1), (H2), (H3).
Let $\mathcal L_h\in\mathcal{E}(\Omega)$ and let $a^h(x)=[a^h_{ij}(x)]$ be the associated matrix, in according with \eqref{operL}.
Then, there exist a symmetric matrix $a\,=[a_{ij}(x)]$, satisfying \eqref{condJ1}, and an operator $\mathcal{L}_\infty:=\,{\rm div}_X(a(x)X)\in\mathcal{E}(\Omega)$ such that, for any $g\in L^2(\Omega)$, $\mu\geq 0$ and $h\in\mathbb{N}$, if $u_h$ and $u_\infty$ denote, respectively, the (unique) solutions to
\[
\displaystyle{\begin{cases}
\mu u+\mathcal{L}_h(u)=g\ \text{in}\ \Omega\\
u\in H^1_{X,0}(\Omega)
\end{cases}
}
\quad\text{and}\qquad
\displaystyle{\begin{cases}
\mu u+\mathcal{L}_\infty(u)=g\ \text{in}\ \Omega\\
u\in H^1_{X,0}(\Omega)
\end{cases}
}
\]
then, up to subsequences, the following convergences hold:
\begin{equation}\label{convsol}
u_h\to u_\infty\text{ strongly in }L^2(\Om)\quad\text{(convergence of solutions)}
\end{equation}
and 
\begin{equation*}
a^hXu_h\to aXu_\infty\text{ weakly in }L^2(\Om)^m\quad\text{(convergence of momenta)}.
\end{equation*}
\end{teo}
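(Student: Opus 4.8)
The plan is to reduce the statement to the variational results of Section~\ref{sec4} by means of Remark~\ref{equivsoluzmin}, and then to invoke the $\Gamma$-compactness Theorem~\ref{mainthm} and Corollary~\ref{corconvmom}. Work with $p=2$. For each $h$ put $f_h(x,\eta):=\tfrac12\langle a^h(x)\eta,\eta\rangle$; by \eqref{condJ1} one has $f_h\in I_{m,2}(\Omega,c_0/2,c_1/2)$, and by Remark~\ref{equivsoluzmin} the solution $u_h$ of $\mu u+\mathcal L_h(u)=g$ is the \emph{unique} minimiser over $H^1_{X,0}(\Omega)$ of $\Xi_h:=F_h+G_\mu$, where $F_h(v)=\int_\Omega f_h(x,Xv)\,dx$ and $G_\mu(v)=\int_\Omega(\tfrac\mu2 v^2-gv)\,dx$. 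A Young-inequality computation shows that $g_\mu(x,s):=\tfrac\mu2 s^2-g(x)s$ satisfies \eqref{crescitag}--\eqref{constg} for $p=2$ and every $\mu\ge 0$ (choosing the splitting parameter small), so $G_\mu$ is an admissible lower-order term. By Theorem~\ref{mainthm}, up to one fixed subsequence (not relabelled, and \emph{independent of} $g$ and $\mu$), $F_h$ $\Gamma(L^2(\Om))$-converges — also locally, and for the perturbed functionals of Theorem~\ref{pertfunct} — to $F(v)=\int_\Omega f(x,Xv)\,dx$ with $f\in I_{m,2}(\Omega,c_0/2,c_1/2)$.

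The heart of the proof, and the step I expect to be the main obstacle, is to show that the $\Gamma$-limit integrand $f(x,\cdot)$ is again a pointwise quadratic form, $f(x,\eta)=\tfrac12\langle a(x)\eta,\eta\rangle$ for a symmetric measurable matrix $a$ satisfying \eqref{condJ1}. First, $2$-homogeneity and evenness of $f(x,\cdot)$ transfer through $\Gamma$-convergence: since $F_h(tv,A)=t^2F_h(v,A)$ and $v\mapsto tv$ is an $L^2$-homeomorphism, the $\Gamma$-limit obeys $F(tv,A)=t^2F(v,A)$ for all $t\in\bbR$, whence $f(x,t\eta)=t^2f(x,\eta)$ a.e.\ (varying $Xv$ freely via the localised Theorem~\ref{mainthm} and Theorem~\ref{pertfunct}). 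Second, the parallelogram identity transfers as well: given recovery sequences $v_h\to v$ and $w_h\to w$ for $F(v,A)$ and $F(w,A)$, the exact identity $F_h(v_h+w_h,A)+F_h(v_h-w_h,A)=2F_h(v_h,A)+2F_h(w_h,A)$ together with the $\Gamma\text{-}\liminf$ inequality gives $F(v+w,A)+F(v-w,A)\le 2F(v,A)+2F(w,A)$; applying this with $(v+w,v-w)$ in place of $(v,w)$ and using $2$-homogeneity yields the reverse inequality, hence equality. Thus $F(\cdot,A)$, and by Theorem~\ref{pertfunct} every perturbed functional $u\mapsto\int_A f(x,Xu+\Phi)\,dx$, satisfies a parallelogram-type identity; choosing $\Phi=\eta\,\chi_B$ with $B\Subset A$ a metric ball and $\eta_1,\eta_2\in\bbR^m$, dividing by $|B|$ and letting $B$ shrink to a Lebesgue point of the relevant maps, one obtains the pointwise parallelogram identity for $f(x,\cdot)$. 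A finite convex $2$-homogeneous function on $\bbR^m$ satisfying the parallelogram identity is a quadratic form (Jordan--von Neumann), so $f(x,\eta)=\tfrac12\langle a(x)\eta,\eta\rangle$; measurability of $a$ follows from that of $f$, and \eqref{condJ1} from the bounds $\tfrac{c_0}2|\eta|^2\le f(x,\eta)\le\tfrac{c_1}2|\eta|^2$. Set $\mathcal L_\infty:=\mathrm{div}_X(a X)\in\mathcal E(\Omega)$, as in \eqref{operL}.

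With $f$ quadratic, it lies in $\ci^1(\bbR^m)$ and $\nabla_\eta f_h(x,\eta)=a^h(x)\eta$ satisfies $|\nabla_\eta f_h(x,\eta_1)-\nabla_\eta f_h(x,\eta_2)|\le c_1|\eta_1-\eta_2|$, i.e.\ hypothesis $(i)$ of Theorem~\ref{convmom} with $\alpha=1$, $b\equiv 0$; hypotheses $(ii)$, $(iii)$ hold because all integrands involved are quadratic. Hence, for any fixed $g\in L^2(\Omega)$ and $\mu\ge 0$, Corollary~\ref{corconvmom} applies (with lower-order term $G_\mu$ and boundary datum $\varphi\equiv 0$): the minimisers $u_h$ converge — along the same fixed subsequence, and in fact the whole sequence, by uniqueness of minimisers (strict convexity of $F_h+G_\mu$ and of $F+G_\mu$) — weakly in $W^{1,2}_X(\Omega)$ and strongly in $L^2(\Omega)$ to the unique minimiser $u_\infty$ of $F+G_\mu$ over $H^1_{X,0}(\Omega)$, and the momenta converge, $a^hXu_h=\partial_\Phi\mathcal F_h(Xu_h)\rightharpoonup \nabla_\eta f(\cdot,Xu_\infty)=aXu_\infty$ weakly in $L^2(\Omega)^m$. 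By Remark~\ref{equivsoluzmin} applied to $\mathcal L_\infty$, $u_\infty$ is exactly the unique solution of $\mu u+\mathcal L_\infty(u)=g$, which proves \eqref{convsol} and the convergence of momenta; since the subsequence and the matrix $a$ come from the ($g,\mu$-independent) $\Gamma$-limit $F$, they are common to all data $g$ and all $\mu\ge 0$, so no diagonal extraction is needed. The genuinely delicate point is the quadraticity of the $\Gamma$-limit; the passage from balls to a Lebesgue point in that identification is the only other place requiring care.
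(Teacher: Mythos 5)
Your overall strategy is the same as the paper's: identify $u_h$ via Remark \ref{equivsoluzmin} as the unique minimizer of $F_h+G_\mu$ on $H^1_{X,0}(\Omega)$, pass the Dirichlet condition through Theorem \ref{convboundary} (equivalently, your choice $\varphi\equiv 0$ in Corollary \ref{corconvmom}), and obtain \eqref{convsol} and the momenta from Theorem \ref{convminimiz} and Theorem \ref{convmom}, the hypotheses of which are indeed trivially verified for quadratic integrands with $\alpha=1$. Where you genuinely diverge is the key identification of the limit: the paper does not prove quadraticity of the $\Gamma$-limit here at all, but quotes the $\Gamma$-compactness theorem for quadratic forms from the companion paper \cite[Theorem 4.20]{MPSC1}, which directly furnishes the symmetric matrix $a$ satisfying \eqref{condJ1}; you instead start from the general compactness Theorem \ref{mainthm} and re-derive quadraticity by hand, transferring $2$-homogeneity and the parallelogram law through $\Gamma$-convergence and then localizing via Theorem \ref{pertfunct} and Jordan--von Neumann. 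This is a legitimate and more self-contained route, and your functional-level arguments (recovery sequences plus the $\Gamma$-$\liminf$ inequality for one direction, the substitution $(v+w,v-w)$ plus homogeneity for the reverse) are exactly the standard mechanism; the price is that the pointwise step needs slightly more care than you state. The perturbed functional $u\mapsto\int_A f(x,Xu+\Phi)\,dx$ is not itself quadratic in $u$, so the identity you want must be run with \emph{mixed} perturbations: take recovery sequences $v_h,w_h\to 0$ for $G^{\eta_1}(0,A)$ and $G^{\eta_2}(0,A)$, use the exact pointwise identity $f_h(x,Xv_h+Xw_h+\eta_1+\eta_2)+f_h(x,Xv_h-Xw_h+\eta_1-\eta_2)=2f_h(x,Xv_h+\eta_1)+2f_h(x,Xw_h+\eta_2)$, and apply the $\Gamma$-$\liminf$ inequality for $G^{\eta_1\pm\eta_2}_h$ along $v_h\pm w_h$; together with the representation of Theorem \ref{pertfunct} this gives $\int_A\bigl(f(x,\eta_1+\eta_2)+f(x,\eta_1-\eta_2)\bigr)dx\le\int_A\bigl(2f(x,\eta_1)+2f(x,\eta_2)\bigr)dx$ for every open $A$, and the reverse by the same substitution trick once $2$-homogeneity of $f(x,\cdot)$ is obtained (again via constant perturbations and $u=0$). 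Since these integral identities hold for all open $A\subset\Omega$, a.e.\ equality follows for each fixed pair $\eta_1,\eta_2$ without any Lebesgue-point argument, and density plus continuity of $f(x,\cdot)$, polarization and the bounds $\tfrac{c_0}2|\eta|^2\le f(x,\eta)\le\tfrac{c_1}2|\eta|^2$ give the measurable symmetric $a$ with \eqref{condJ1}. One should also note (as you implicitly use) that the representation in Theorem \ref{pertfunct} pins down the limit uniquely, so by the Urysohn property no further subsequence depending on $\Phi$ is needed. In short: your proof is correct modulo these routine details and trades the paper's one-line appeal to the quadratic-forms compactness result of \cite{MPSC1} for a self-contained, but longer, identification argument; the paper's citation is shorter, your version makes the present paper independent of that specific result.
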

\begin{proof}
{\bf 1st step.} Let us prove that, up to a subsequence, there exists an operator $\mathcal L=\,{\rm div}_X(a(x)X)\in  \mathcal{E}(\Omega)$ for which \eqref{convsol} holds. 
\medskip

Let $(a^h)_h$ be the sequence of matrices associated to $(\mathcal{L}_h)_h$ and let $F_h: L^2(\Omega)\to [0,\infty]$ be the quadratic functional defined by
\[
F_h(u):=\displaystyle{\begin{cases}
\frac{1}{2}\int_{\Omega}\langle a^h(x)Xu(x),Xu(x)\rangle\,dx&\text{ if }\,u\in W^{1,2}_{X}(\Omega)\\
\infty&\text{ otherwise}
\end{cases}
\,.
}
\]
By \cite[Theorem 4.20]{MPSC1}, there exist $F:L^2(\Omega)\to[0,\infty]$ and a symmetric matrix $a\,=[a_{ij}(x)]$, satisfying \eqref{condJ1}, such that, up to subsequences, $(F_h)_h$ $\Gamma$-converges in the strong topology of $L^2(\Omega)$ to $F$ and $F$ can be represented as
\[
F(u):=\displaystyle{\begin{cases}
\frac{1}{2}\int_{\Omega}\langle a(x)Xu(x),Xu(x)\rangle\,dx&\text{ if }\,u\in W^{1,2}_{X}(\Omega)\\
\infty&\text{ otherwise}
\end{cases}
\,.
}
\]
Let $\mathcal{L}_\infty$ be the elliptic operator associated with $a$ on $L^2(\Omega)$, as in \eqref{operL}. It is easy to see that $\mathcal{L}_\infty$ is the operator associated, in the sense of \cite[Definition 12.8]{DM}, to functional $F^0: L^2(\Omega)\to [0,\infty]$ defined by
\[
F^0(u)=\displaystyle{\begin{cases}
\frac{1}{2}\int_{\Omega}\langle a(x)Xu(x),Xu(x)\rangle\,dx & \text{ if }\,u\in H^{1}_{X,0}(\Omega)\\
\infty&\text{ otherwise}
\end{cases}
\,.
}
\]
Let us consider $F^0_h: L^2(\Omega)\to [0,\infty]$ defined by
\[
F^0_h(u)=\displaystyle{\begin{cases}
\frac{1}{2}\int_{\Omega}\langle a^h(x)Xu(x),Xu(x)\rangle\,dx & \text{ if }\,u\in H^{1}_{X,0}(\Omega)\\
\infty&\text{ otherwise}
\end{cases}
\,,
}
\]
whose associated operators are the $(\mathcal{L}_h)_h$.
By Theorem \ref{convboundary}, with $\varphi=0$ and $A=\Omega$, it holds that
\begin{equation}\label{Fh0GammaF0}
(F_h^0)_h \text { $\Gamma$-converges to }F^0\text{ in the strong topology of }L^2(\Omega)\,.
\end{equation}
Fix $\mu\geq 0$ and $g\in L^2(\Omega)$, and denote by $G: L^2(\Omega)\to \mathbb{R}$ the functional
\[
G(u):=\int_{\Omega}\left(\frac{\mu}{2} u^2-gu\right)dx\,.
\]
Since $G$ is (strongly) continuous in $L^2(\Omega)$, then, by \eqref{Fh0GammaF0} and in virtue of \cite[Proposition 6.21]{DM},
\begin{equation}\label{Fh0+PhiGammaF0+Phi}
(F_h^0+G)_h \text { $\Gamma$-converges to }F^0+G\text{ in the strong topology of }L^2(\Omega)\,.
\end{equation}
By Remark \ref{equivsoluzmin}, $u_h$ and $u$ turn out to the unique minimizers of $F_h+G$ and $F+G$, respectively.
Therefore, by Theorem \ref{convminimiz}, we get \eqref{convsol} and
\begin{align*}
\lim_{h\to\infty}\left(F_h^0(u_h)+G(u_h)\right)&=\lim_{h\to\infty}\min_{u\in H^{1}_{X,0}(\Omega)}\left(F_h^0(u)+G(u)\right)\\
&=\min_{u\in H^{1}_{X,0}(\Omega)}\left(F^0(u)+G(u)\right)=F^0(u_\infty)+G(u_\infty)\,.
\end{align*}
\medskip

{\bf 2nd step. } For a.e. $x\in\Om$ for any $\eta\in\mathbb{R}^m$ and for any $h\in\mathbb{N}$, let
\[
f_h(x,\eta):=\,\langle a^h(x)\eta,\eta\rangle\quad\text{and}\quad f(x,\eta):=\,\langle a(x)\eta,\eta\rangle\,.
\]
It is easy to see that $f_h$ and $f$ satisfy assumptions (i), (ii) and (iii) of Theorem \ref{convmom}. Moreover,
\[
\partial_\Phi \mathcal F_h(Xu_h)=\,a_h\,Xu_h\quad\text{and}\quad\partial_\Phi\mathcal F(Xu)=\,a\,Xu\,.
\]
Therefore, by the first step of the proof and by Corollary \ref{corconvmom}, we get the convergence of momenta.
\end{proof}

%%%%%%%%%%%%%%%%%%%%%%%%%

\end{document}